\documentclass[11pt,
]{amsart}
\usepackage{tikz-cd}
\usepackage{
	amssymb, graphicx, color, cite 
}

\usepackage{mathtools}
\mathtoolsset{showonlyrefs}
\usepackage[linkcolor=blue, citecolor=black, breaklinks=true]{hyperref}

\usepackage[normalem]{ulem}

\renewcommand{\Im}{\textrm{Im}\,}

\renewcommand{\Re}{\textrm{Re}\,} 

\let\originalleft\left
\let\originalright\right
\renewcommand{\left}{\mathopen{}\mathclose\bgroup\originalleft}
\renewcommand{\right}{\aftergroup\egroup\originalright}

\date{}

\usepackage[T1]{fontenc}
\usepackage{lmodern}
\numberwithin{equation}{section}%

\newtheorem{theorem}{Theorem}[section]
\newtheorem{proposition}{Proposition}[section]
\newtheorem{lemma}{Lemma}[section]
\newtheorem{definition}{Definition}[section]
\newtheorem{corollary}{Corollary}[section]

\definecolor{HaimColor}{rgb}{0.92,0.19,0.6}
\definecolor{PlamenColor}{rgb}{0,0,1}

\theoremstyle{definition}
\newtheorem{remark}{Remark}[section]

\DeclareMathOperator{\supp}{supp}

\renewcommand{\sc}{semiclassical}

\DeclareMathOperator{\WFH}{WF_\textit{h}}

\DeclareMathOperator{\dist}{dist}
\newcommand{\eps}{\varepsilon}

\newcommand{\R}{{\mathbb R}}
\renewcommand{\S}{{\mathbb S}}

\newcommand{\Id}{\text{\rm Id}}
\renewcommand{\r}[1]{\eqref{#1}}

\newcommand{\be}[1]{\begin{equation}\label{#1}}
	\newcommand{\ee}{\end{equation}}
\renewcommand{\d}{\mathrm{d}}

\renewcommand{\i}{\mathrm{i}}

\newcommand{\p}{\partial}

\title[Viscoelastic wave equation with singular memory]{High-frequency wave propagation for the viscoelastic wave equation with singular memory}

\author[H.~Grebnev]{Haim Grebnev}
\address{Department of Mathematics, Purdue University, West Lafayette, IN 47907}

\author[P.~Stefanov]{Plamen Stefanov}
\address{Department of Mathematics, Purdue University, West Lafayette, IN 47907}
\thanks{Second author partly supported by NSF Grant DMS-2452757}

\begin{document}
\begin{abstract}
We study high-frequency propagation for a viscoelastic wave equation with spatially dependent hereditary memory written in relative-history form. The memory kernel is allowed to have an integrable singularity at the origin of the form 
$\mathfrak m(s,x)=s^{p-1}m(s,x)$, $0<p<1$, with the regular case $p=1$ included for comparison.   We use the half-amplitude propagation distance and the corresponding travel time as units of space and time. If $0<h\ll1$ denotes the wavelength parameter in these dimensionless variables, the transformed memory term carries the small factor $\varepsilon=h^{1-p}$. We construct exact solutions with a full two-scale geometric optics expansion in powers $h^{k+(1-p)\ell}$. The memory changes the propagation geometry through the instantaneous modulus $\sigma+\varepsilon \int_0^\infty \mathfrak{m}(s,\cdot)\, \d s$, while the singular part of the kernel contributes the complex coefficient $C_p=\Gamma(p)e^{\i\pi p/2}$ to the leading transport equation. For $0<p<1$, this produces fractional scales, frequency-dependent attenuation, and a   dispersive phase correction. When $p=1$, the fractional hierarchy disappears, the leading attenuation is frequency independent, and the corresponding transport phase correction vanishes.

We also derive a \textit{local} damped wave equation whose incoming high-frequency solutions approximate those of the hereditary equation with an $O(h)$ error in semiclassical $C^k$ norms. 
We prove unique recovery of $\sigma_{\mathfrak m}$ and the full temporal jet of $m(s,x)$ at $s=0$  from exterior observations of incoming waves for all incident directions and all $0<h\ll1$.  These quantities determine the complete high-frequency expansion modulo $O(h^\infty)$.

Finally, we establish well-posedness by a contraction semigroup and prove arbitrary finite-order Sobolev regularity for spatially dependent weakly singular kernels and a prescribed full prehistory, with explicit compatibility conditions and estimates uniform in $\varepsilon$. These estimates justify the geometric optics construction, in particular. 
\end{abstract}

\maketitle

\section{Introduction}
Consider the viscoelastic wave equation
\be{1}
\begin{dcases}
\partial_t^2 u-\nabla\cdot(\sigma(x)\nabla u)-\nabla\cdot\int_0^\infty
\mathfrak m(s,x)\bigl(\nabla u(t,x)-\nabla u(t-s,x)\bigr)\,\d s=0,
&t>0,\quad x\in\Omega,\\
u(0,x)=u_0(x),\quad u_t(0,x)=v_0(x),\quad u(t,x)|_{t<0}=u_-(t,x).
\end{dcases}
\ee
Here $\Omega\subset\R^n$, $n\ge2$, is a bounded smooth domain, $0<\sigma\in C^\infty(\bar\Omega)$ is the equilibrium stiffness, and $\mathfrak m\ge0$ is the memory kernel. We impose the homogeneous Dirichlet condition on $\p\Omega$. In the propagation part of the paper the $x$--support of $\mathfrak m$ is compactly contained in the region reached by the waves, and the waves do not meet the boundary, so the latter plays no role there. 
Unlike an ordinary wave equation, \r{1} is not determined by $(u_0,v_0)$ alone: the complete prehistory $u_-$ must also be prescribed. It is naturally encoded by the history variable
\be{eta}
\eta^t(s)=u(t)-u(t-s),\qquad t\ge0,\quad s\ge0,
\ee
whose spatial gradient is the relative strain history of age $s$ at time $t$. Apart from the regularity and matching conditions needed at $t=0$, the prehistory is not required to solve an equation for negative time. The precise compatibility conditions between $u_-$, the initial data, and a possible source are part of the higher regularity theory developed in Section~\ref{sec_wp}.

In the scalar setting of \r{1}, denoting the stress flux by $\tau(t,x)$, the Boltzmann constitutive law takes the compact form
\[
\tau(t,x)=\sigma(x)\nabla u(t,x)+\int_0^\infty\mathfrak m(s,x)\nabla\eta^t(s,x)\,\d s,
\]
and \r{1} is the balance law $\partial_t^2u-\nabla\cdot\tau=0$. For the physical foundations of this constitutive law, see \cite{ColemanNoll1961,Lakes2009}. The relative-history form is used explicitly in \cite{HrusaR_85,Chep_Pata}, while equivalent strain and strain-rate convolution formulations, as well as more general hereditary models, are discussed in \cite{HrusaNR_88,Hanyga2002PropagationPulses}.

We are interested in kernels that are smooth for $s>0$ but have an integrable conormal singularity at the origin. More precisely, we assume
\begin{equation}\label{m0}
\mathfrak m(s,x)=s^{p-1}m(s,x),\qquad 0\le m\in C^\infty([0,\infty)\times\R^n),\qquad 0<p<1,
\end{equation}
and we also discuss the regular case $p=1$. In addition, we assume that the kernel is decreasing in $s$ and is integrable at infinity, together with the spatial derivatives required in the corresponding results, see section~\ref{sec_wp}. Singular memory kernels and their influence on wave propagation have been studied in \cite{Hanyga2001SingularMemory,Hanyga2002PropagationPulses,HanygaSeredynska1999MemoryKernelSingularityI,HanygaSeredynska2002SingularMemory,HrusaR_85}. They arise naturally in fractional constitutive models \cite{BagleyTorvik1983} and produce power-law attenuation, dispersion, phase shifts, and smoothing of wave fronts; see also \cite{Szabo1994PowerLaw,HolmNasholm2014,BrouckeOparnica2022} and the discussion in Section~\ref{sec_phys}.

The present value of the strain in the memory term contributes the nonnegative quantity
\be{sigma_m}
\sigma_{\mathfrak m}(x)=\int_0^\infty\mathfrak m(s,x)\,\d s
\ee
to the stiffness. The associated relaxation modulus is $a(s,x)=\sigma(x)+\int_s^\infty\mathfrak m(r,x)\,\d r$, so that $-\partial_s a=\mathfrak m$. Thus $a(\infty,x)=\sigma(x)$ is the equilibrium modulus, while $a(0,x)=\sigma(x)+\sigma_{\mathfrak m}(x)$ is the instantaneous modulus and determines the corresponding propagation speed; see \cite[Chapter~2]{Lakes2009} for this standard terminology. In particular, the  assumption $\sigma=1$ we adopt later fixes only the equilibrium elastic background; the relaxation behavior may remain spatially heterogeneous through $\mathfrak m$. We keep the present-time and convolution contributions separate in order to track how the relaxing part transforms under the subsequent non-dimensionalization. The remaining convolution has multiplier of order $-p$ in the temporal frequency. Combined with the two spatial derivatives, it is therefore of order $2-p$ and produces an attenuation rate increasing as the frequency to the power $1-p$.

Closest to the present work in its combination of a viscoelastic wave model, weakly singular kernels, forward analysis, inverse estimation, and numerical reconstruction is \cite{KaltenbacherEtAl2022}. That work treats general linear elastic systems with temporal kernels, proves well-posedness of the state and adjoint equations, and develops a PDE-constrained optimization method for estimating the kernels from additional observations. Here the kernel depends also on $x$, and our objective is different: we analyze its effect on high-frequency propagation and prove uniqueness in recovering its instantaneous modulus and its singular temporal structure from wave measurements.

To describe the high-frequency regime, we use the half-amplitude propagation distance and the corresponding travel time as units of space and time, and let $h>0$ denote the dimensionless wavelength parameter. We write the corresponding dimensionless frequency as $\lambda/h$, where
\be{lambda}
0<\lambda_{\min}\le\lambda\le\lambda_{\max}<\infty.
\ee
The constants in the construction are uniform for $\lambda$ in this fixed band. We send into the medium a wave of the form $\chi(x)e^{\i\lambda(-t+x\cdot\omega)/h}$ whose support enters the $x$--support of $\mathfrak m$. At high frequency the physical half-amplitude distance shrinks but still contains many wavelengths, so it is a natural unit for the propagation problem. Using this distance and the corresponding travel time as the units of space and time preserves the normalized wave speed and makes the relevant propagation interval of order one, see section~\ref{sec_rescale}. After transforming the conormal kernel and renaming the rescaled variables and its smooth profile, the dimensionless memory term carries the factor
\be{critical-scaling}
\varepsilon=h^{1-p}.
\ee
Here and below $\mathfrak m$ denotes the transformed dimensionless kernel profile. This is a change of units for a fixed material, rather than an $h$--dependent change of its constitutive law; see Section~\ref{sec_rescale} again. For the analysis, we first regard $h$ and $\varepsilon$ as independent parameters and impose \r{critical-scaling} afterward. The instantaneous modulus in these variables is $\sigma+\varepsilon\sigma_{\mathfrak m}$, while the attenuation accumulated along a ray is of order one.

This regime presents several related difficulties. The singular convolution generates fractional powers of $h$, and the parameters $h$ and $\varepsilon=h^{1-p}$ have to be tracked simultaneously. Moreover, the memory changes not only the transport equations but also the eikonal equation: the phase itself depends on $\varepsilon$. Consequently, one must construct and control an expansion involving the exponents $h^{k+(1-p)\ell}$ and then correct it to an exact solution using estimates uniform in $\varepsilon$, hence uniform as $h\to0$ after setting $\varepsilon=h^{1-p}$. In the forward theory, the full prehistory and the spatial dependence of the kernel must also be incorporated into the energy and generator domains, and higher time regularity requires compatibility conditions that contain the prehistory contribution.

Our \textbf{first main result}, Theorem~\ref{thm_exp}, gives an exact solution with a full geometric optics expansion in integral powers of $h$ whose phase and amplitudes depend smoothly on $\varepsilon$. Expanding this dependence produces a two-scale asymptotic hierarchy in powers $h^{k+(1-p)\ell}$. The $\varepsilon$--dependent phase records the modification of the propagation geometry by the instantaneous modulus, while the first transport equation exhibits the leading attenuation and phase shift produced by $m(0,x)$. The expansion also quantifies the frequency dispersion caused by the singularity. These conclusions are not merely extensions of the regular case. When $p=1$, the second scale and the fractional hierarchy disappear, and the leading memory contribution produces frequency-independent attenuation with no corresponding transport phase shift. For $0<p<1$, the singularity produces the additional scale $h^{1-p}$, fractional powers, frequency-dependent attenuation, and a nonzero phase correction responsible for dispersion. For clarity, the principal construction is carried out with $\sigma=1$; the extension to a variable background stiffness is discussed in Section~\ref{sec_var}.

A consequence of the expansion is a \textit{local} differential approximation of the hereditary equation at a fixed $\lambda$. Theorem~\ref{thm_appr} constructs a damped wave equation involving $\sigma_{\mathfrak m}$ and $m(0,x)$ whose incoming geometric optics solution differs from that of \r{1} by $O(h)$ in semiclassical $C^k$ norms. Besides identifying the first effective local model, this approximation avoids storing and repeatedly integrating the complete history in numerical propagation. It may also be useful for time reversal, since the local damped equation can be evolved backward over a finite interval whereas the hereditary equation would require knowledge of a future history.

Our \textbf{second main result} concerns the inverse problem. We observe the waves after they leave the support of the memory, for all incident directions and all sufficiently small $h$. Theorem~\ref{thm_IP} shows that these measurements determine $\sigma_{\mathfrak m}(x)$ and the full jet $\partial_s^j m(0,x)$, $j\ge0$. The phase first determines the X-ray transform of $\sigma_{\mathfrak m}$; after the phase is known, the successive transport equations determine the Taylor coefficients of $m$ through 
the X-ray transform. The attenuation arising in this recursion may be complex; injectivity in the Euclidean setting follows from \cite{Novikov}. A perturbation of $m$ that is flat at $s=0$ changes the high-frequency expansion only by $O(h^\infty)$, which explains both the scope and the natural limitation of the recovery result.

Earlier, Romanov \cite{Romanov2014Coefficients}  obtained inverse results for viscoelastic systems with smooth convolution kernels ($p=1$ in our notation) from boundary impulse measurements, recovering, under suitable geometric assumptions, the coefficients and the temporal jet of the memory kernel at the origin. More recently, geometric optics and inverse recovery for a spatially dependent but smooth memory kernel have been studied in \cite{Covi_26}.   In that setting, the singularity in \r{m0}, the critical scaling \r{critical-scaling},  the resulting fractional two-scale structure, and the resulting qualitative effects are absent. In \cite{acosta2025}, fractional attenuation is modeled by a lower-order term $a(x)\p_t^\alpha u$, $\alpha\in(0,1)$,  whereas the memory operator in the present paper behaves as $\p_t^{-p}\Delta$ and has order $2-p\in(1,2)$. For hyperbolic equations with a spatially lower-order memory term, reconstruction of the memory kernel from partial boundary measurements was established in \cite{BukhgeimDyatlovUhlmann2002}, and a related unique continuation result was proved in \cite{BukhgeimDyatlovUhlmann2007}. In contrast with the optimization approach of \cite{KaltenbacherEtAl2022}, our inverse result is an exact uniqueness theorem derived from the high-frequency phase and amplitudes.

Our \textbf{third main result} is the well-posedness and higher regularity theory for \r{1}. In particular, it is needed to justify the construction but we prove much more than that. We represent the weak evolution by a contraction semigroup on an energy space containing the history variable. This semigroup formulation follows the history-space approach used by Chepyzhov and Pata \cite{Chep_Pata}, building on Dafermos \cite{Dafermos_70,Dafermos_76}; their memory kernel, however, is regular and independent of $x$. We then characterize the regular data and compatibility conditions that yield $C^k$--Sobolev solutions for arbitrary finite $k$, including the contributions of the prescribed prehistory and a source, and obtain the estimates required uniformly for $0\le\varepsilon\le\varepsilon_0$. Classical solutions for a one-dimensional regular finite-delay wave equation were obtained by Hille--Yosida methods in \cite{Sinestrari1999}, while \cite{KaltenbacherEtAl2022} proves energy well-posedness for state and adjoint problems with temporal weakly singular kernels. To our knowledge, the combination used here --- a spatially dependent weakly singular kernel, a prescribed full prehistory, explicit arbitrary-order Sobolev compatibility conditions, and estimates uniform in the scaling parameter --- has not previously been established.

The paper is organized as follows. Section~\ref{sec_stat} gives a stationary analysis motivating the scaling and the propagation effects. Section~\ref{sec_GO} constructs and analyzes the geometric optics expansion and proves the local differential approximation, after which the principal effects are illustrated numerically. Section~\ref{sec_IP} treats the inverse problem. Further physical and mathematical remarks follow, and Section~\ref{sec_wp} develops the semigroup and higher regularity theory and completes the justification of the asymptotic construction.

\section{A stationary approach} \label{sec_stat}
We start with a stationary approach. Some of the analysis here is on a heuristic level. Assume $\sigma=1$.  
Look for solutions of the form $u(t,x)=e^{-\i t\lambda}v(x,\lambda)$. Then $v$ would solve the following stationary equation, obtained by formally taking the Fourier transform with respect to $t$:
\begin{equation}   \label{3.1}
(-\Delta-\lambda^2  +  \nabla\cdot ( \mu(\lambda,x)-\mu(0,x) )\nabla )v=0,
\end{equation}
where
\begin{equation}   \label{mu}
\mu(\lambda,x) = \int_0^\infty e^{\i\lambda s}\mathfrak{m}(s,x)\,\d s
\end{equation}
is just the rescaled inverse Fourier transform with respect to $s$ of $\mathfrak{m}$, extended as $0$ for $s<0$.
Note that $\mu$ is complex-valued, and $\Im\mu\ge0$ for $\lambda\gg0$. We have $\mu(0,x) = \sigma_{\mathfrak{m}}(x)$. 
 
\begin{lemma}\label{lemma_m_exp}
	We have the following asymptotic expansion 
\begin{equation}   \label{alpha_exp}
	\mu(\lambda,x)\sim   \lambda^{-p}\sum_{k\ge0}   C_{p+k}\frac{m_k(x)}{k!}\lambda^{-k}, \quad \text{as $\lambda\gg1$},
\end{equation}
where $m_k(x)=  \partial_s^km|_{s=0} $, 
and $C_{p+k}:= e^{\i \pi (p+k)/2}\Gamma(p+k) = i^{k}p(p+1)\dots (p+k-1)C_p $. In particular, 
\[
\mu(\lambda,x)\sim   {C_p}\, \lambda^{-p} m_0(x)\big(1+O(\lambda^{-1})\big) , \quad C_p := e^{\i\pi p/2} \Gamma(p).
\] 
\end{lemma}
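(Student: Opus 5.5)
The expansion is the classical Erdélyi-type asymptotics for a one-sided Fourier transform of a function with a conormal singularity at $s=0$. We will derive it by splitting the $s$-integral and rotating contours on the local piece. Fix $x$ and suppress it; all estimates below are uniform in $x$ on compact sets, because $m$ is smooth in both variables.

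First we localize. Choose $\chi\in C_c^\infty([0,\infty))$ with $\chi=1$ near $0$. Split $\mu=\mu_1+\mu_2$, where $\mu_1$ contains $\chi\,\mathfrak m$ and $\mu_2$ contains $(1-\chi)\mathfrak m$.

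The integrand of $\mu_2$ is smooth on $[0,\infty)$ and vanishes near $s=0$. Using the assumed decay and integrability at infinity of $\mathfrak m$ and its $s$-derivatives, repeated integration by parts with $e^{\i\lambda s}=(\i\lambda)^{-1}\partial_s e^{\i\lambda s}$ gives $\mu_2=O(\lambda^{-\infty})$. This requires integrability of $\partial_s^j\mathfrak m$ at infinity. If only decrease and integrability are assumed, one can instead use a cutoff at large $s$ with a slowly varying argument. We will take the smoothness and decay hypotheses of Section~\ref{sec_wp} as given.

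For $\mu_1$ we Taylor expand $m(s)=\sum_{k<N} m_k s^k/k!+s^N r_N(s)$ with $r_N$ smooth. This reduces the problem to two estimates. The first is
\[
\int_0^\infty e^{\i\lambda s}s^{p+k-1}\chi(s)\,\d s=C_{p+k}\lambda^{-p-k}+O(\lambda^{-\infty}).
\]
The second is a remainder bound $O(\lambda^{-p-N})$ for $\int e^{\i\lambda s}s^{p+N-1}\chi r_N\,\d s$.

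For the model integral, write $\chi=1-(1-\chi)$. The $(1-\chi)$ part is again $O(\lambda^{-\infty})$ by integration by parts; the boundary terms at infinity vanish because the powers $s^{p+k-1-j}$ are eventually integrable after enough differentiations. The remaining integral $\int_0^\infty e^{\i\lambda s}s^{a-1}\,\d s$, with $a=p+k$, is understood as an oscillatory (Abel-regularized) integral. Rotating the contour to $s=\i t/\lambda$, which is justified by Jordan's lemma for $0<a<1$ and by the regularization for larger $a$, gives $\lambda^{-a}e^{\i\pi a/2}\Gamma(a)=C_a\lambda^{-a}$. Alternatively, insert $e^{-\delta s}$, compute $\Gamma(a)(\delta-\i\lambda)^{-a}$, and let $\delta\to0$.

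The identity $C_{p+k}=\i^k p(p+1)\cdots(p+k-1)C_p$ then follows from $\Gamma(p+k)=(p)_k\Gamma(p)$ and $e^{\i\pi k/2}=\i^k$.

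The remainder is the main technical point. Since $p+N-1\ge0$ for $N\ge1$, we integrate by parts $N$ times. Each step gains $\lambda^{-1}$, and the boundary terms at $s=0$ vanish while the power of $s$ stays positive. After $N$ steps the integrand has the form $s^{p-1}\times(\text{smooth, compactly supported})$, which is integrable. One further step, carried out as in the model case by splitting at $s=1/\lambda$ and integrating by parts only on $s>1/\lambda$, gives the additional factor $\lambda^{-p}$. Hence the remainder is $O(\lambda^{-p-N})$. Summing the model terms over $k<N$ yields \eqref{alpha_exp}, and the case $N=1$ gives the stated leading form $C_p\lambda^{-p}m_0(1+O(\lambda^{-1}))$.
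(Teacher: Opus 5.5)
Your proposal is correct and is essentially the paper's argument: the paper Taylor expands $m$ at $s=0$ and applies termwise the known Fourier transform of $s_+^{p+k-1}$, which equals $C_{p+k}\lambda^{-p-k}$ for $\lambda>0$ (your contour rotation). The paper leaves implicit the localization and remainder estimates that you supply.

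One caveat concerns the tail. The bound $\mu_2=O(\lambda^{-\infty})$ genuinely requires integrability of $\partial_s^j\mathfrak m$ near $s=\infty$. This is not among the hypotheses of Section~\ref{sec_wp}, since \eqref{m-int-k} controls only $x$-derivatives, and the paper uses it only tacitly. Your proposed fallback using monotonicity and integrability alone cannot work. A suitable smooth, decreasing, integrable kernel, whose $s$-derivative consists of ever narrower nonnegative bumps at the integers, already violates the expansion at a finite polynomial order. So this condition should simply be stated as an additional hypothesis.
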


\begin{proof}
We have, see \cite[p.~110]{Friedlander1998}, \cite[p.~72]{Hormander1},
	\[
	\big(s_+^{p-1}\big)\hat{\big.} =   \Gamma(p) \left( e^{-\i \pi p/2} \lambda_+^{-p} + e^{\i \pi p/2} \lambda_-^{-p} \right), 
	\]
where $\lambda_-=(-\lambda)_+$. In particular, 
	\begin{equation}   \label{Cp0}
	\big(s_+^{p-1}\big)\hat{\big.} = \overline{C_p} \lambda^{-p} \quad  \text{for $\lambda>0$}. 
	\end{equation}
Take the Taylor expansion $m(s,x) \sim \sum_{k=0}^\infty \frac{s^k}{k!} m_k (x)$ with respect to $s$. Multiply by $s_+^{p-1}$ and take the Fourier transform to get \r{alpha_exp}, 
where we used  that $\mu(\lambda)= \hat {\mathfrak{m}}(-\lambda)$. This completes the proof. 
\end{proof}

\begin{remark}
Expansion \r{alpha_exp} establishes a one-to-one correspondence between the Taylor expansion of $m(s,x)$ at $s=0$ and polyhomogeneous symbols in $\R$ of order $-p$, restricted to $\lambda>0$. Then we can view $\mathfrak{m}$ as a conormal distribution to $s=0$ with such a symbol, supported in $s\ge0$, and \r{m0} shows that $\mathfrak{m}$ is the general form of such distributions. 
\end{remark}

Notice that 
\be{ReIm_mu}
	\Re \mu \sim m_0(x)  \Gamma(p)\cos(p \pi /2)\lambda^{-p}, \quad \Im\mu\sim m_0(x) \Gamma(p)\sin(p\pi/2) \lambda^{-p},
\ee
where $m_0(x)=m(0,x)$. 
The real part would create a phase shift slightly slowing the wave down. In particular, when $p=1$, the leading real part vanishes, so there is no leading-order phase shift. 
The imaginary part creates attenuation. 
	
\subsection{Memory $\mathfrak m$ independent of $x$}
Assume below that $\mathfrak{m}(s)$ is independent of $x$. Then \r{3.1} implies 
\begin{equation}   \label{3.2}
\left(-\Delta - \frac{\lambda^2}{1+\mu(0)-  \mu(\lambda)} \right)v=0.
\end{equation}
Here $\mu \sim \lambda^{-p}$ as $\lambda \to\infty$ by \r{ReIm_mu}. 
 Then we have the effective frequency 
\be{l_eff}
\lambda_{\rm eff}\coloneq \lambda (1+\mu(0)-\mu(\lambda))^{-1/2} = \frac{\lambda}{c}  \Big(1 +\frac{1}{2c^2} \mu(\lambda)+\dots \Big), \quad c:= \sqrt{1+\mu(0) }. 
\ee
Look for a solution $v=e^{\i \lambda_{\rm eff} x\cdot\omega}$. 
The solutions then look like this:
\begin{equation}   \label{3.4}
v\sim e^{\i\lambda x\cdot\omega/c +\i\frac1{2c^3}  \lambda\mu(\lambda)x\cdot\omega+O(\lambda|\mu|^2/c^4)}.
\end{equation}
When $p=1$, $\mathfrak{m}$ is not singular and $\mu =\i m_0 /\lambda +O(\lambda^{-2})$, so we get 
\[
v\sim e^{\i \lambda x\cdot\omega/c -\frac1{2c^3}m_0 x\cdot\omega},   \quad \text{therefore} \; 
u\sim e^{\i\lambda (-t+x\cdot\omega/c)-\frac1{2c^3}m_0 x\cdot\omega} .  
\]
The effect of the memory term now is to create the attenuation term $e^{-\frac1{2c^3} m_0 x\cdot\omega}$ along the way. Note that there is no $\lambda$ in that term, i.e., the attenuation rate is independent of  the wavelength. 

When $0<p<1$, we have $\mu = C_p\lambda^{-p}m_0+O(\lambda^{-p-1})$, and \r{3.4} implies 
\be{v}
v\sim \exp\left(\i \lambda x\cdot\omega/c +\i \frac{1}{2c^3} C_p\lambda^{1-p}m_0 x\cdot\omega+O(\lambda^{1-2p})\right),
\ee  
therefore,
\be{u}
u\sim \exp\left(\i \lambda(-t+ x\cdot\omega/c) +\i\frac1{2c^3} C_p\lambda^{1-p}m_0  x\cdot\omega + O(\lambda^{1-2p})\right) .
\ee
Notice that $O(\lambda^{1-2p})$ is not small when $0<p\le 1/2$; thus more terms are needed for a small remainder. 

The real part of $C_p$ produces the wave-number correction $\delta k
=\frac{1}{2c^3}\Re C_p \lambda^{1-p}m_0$. 
Equivalently, after propagating a distance $L$, the accumulated phase
correction is $\delta k\,L$. Relative to the leading wave number
$k_0=\lambda/c$, this correction has size
${\delta k}/{k_0}=\frac{1}{2c^2} \Re C_p m_0\lambda^{-p}$. 
The imaginary part of $C_p$ gives the attenuation rate 
$\frac1{2c^3}\Im C_p\,\lambda^{1-p}m_0$. Note that these two rates depend on $\lambda$ now. The distance $d$ over which the attenuation is $1/2$ is given by 
\[
d= \frac{2c^3\log 2}{m_0 \,\Im C_p} \lambda^{p-1}.
\]
Writing $\ell=2\pi/\lambda$ for the wavelength in the original units, the half-amplitude distance is proportional to $\ell^{1-p}/m_0$. Thus $d\to0$ as $\ell\to0$, although $d/\ell\to\infty$: a high-frequency wave propagates through many wavelengths but is attenuated over a macroscopically vanishing distance. We therefore use $d$ as the unit of space and the corresponding travel time as the unit of time. In the resulting variables the dimensionless wavelength is $h=\ell/d$, and the transformed conormal kernel carries the factor $h^{1-p}$. Notice that $c$ also depends on $m$, but when $m$ is small, $c\sim1$, which does not change the arguments above. 

\subsection{A rescaled version} \label{sec_2.2}  
After this nondimensionalization and removal of the tildes, the frequency is written as $\lambda/h$, with the new $\lambda$ varying in \r{lambda}, and the memory term appears with the factor $h^{1-p}$. Even when $\mathfrak{m}$ depends on $x$, for the corresponding $\mu_h(\lambda,x) := h^{1-p} \mu(\lambda/h,x)$, we have
\begin{equation}   \label{alpha_exp1}
\mu_h(\lambda,x) \sim \lambda^{-p} h\sum_{k\ge0} C_{p+k}\frac{m_k(x)}{k!}\lambda^{-k}h^{k}, \quad \text{as $0<h\ll1$}.
\end{equation}
Thus  $\mu_h$ is a polyhomogeneous \sc\ symbol of order one with a small parameter $h$, uniformly for $\lambda$ in the fixed
band \r{lambda}.   It equals $\lambda^{1-p}$ times a function of $h/\lambda$. 


We have the effective speed
\begin{equation}   \label{c(h)}
c(h) = \sqrt{1+h^{1-p}\mu(0)}.
\end{equation}
Note first that $c(h) =1+\frac12 h^{1-p} \mu(0)+O(h^{2-2p})$. Next, the effective semiclassical frequency is 
\be{l_eff1}
\lambda_{\rm eff}  = \frac{\lambda}{c(h)}  \Big(1 +\frac{1}{2c^2(h)} \mu_h(\lambda)+ \frac{3}{8c^4(h)} \mu^2_h(\lambda)+ \dots \Big),  
\ee
see also \r{alpha_exp1}. 
 
Then the phase function in \r{u} takes the form
\begin{equation}\label{phi_exp}
\Phi = \frac{\lambda}{h}(-t+ x\cdot\omega/c(h)) + \frac{\lambda^{1-p}}{2c^3(h)} C_p  m_0 \, x\cdot\omega+ O(h).
\end{equation}
The  factor $c(h)$ above indicates an increase of the phase velocity by $\frac12 h^{1-p}\mu(0)+O(h^{2-2p}) $.  
The second term indicates an additional (smaller) speed correction by $-\frac{h}{2c(h)} (\Re C_p) \lambda^{-p} m_0$ to leading order, and an attenuation at the rate of $\frac1{2c^3(h)} (\Im C_p) \lambda^{1-p} m_0$. 
The temporal frequency remains unchanged, $\lambda/(2\pi h)$.

Another observation, crucial in the time-dependent case, is that it takes many (approximately $1/(1-p)$) terms in the asymptotic expansion of $c(h)$ to get to an error $O(h)$. Before we get to $o(h)$, the contribution of those ``large'' terms in the expansion of $c(h)$ to the first term in \r{phi_exp} creates fast oscillations in $u\sim e^{\i \phi}$, see \r{u}, and cannot be considered part of the amplitude. More precisely, $e^{\i ah^q}\sim 1+\i a h^q+O(h^{2q})$ with $a$ real only when $q>0$. This motivates our approach of considering a phase function that depends on $h$ as well. We also see that the expansion of the phase is in fractional powers of $h$ while the expansion of the amplitude is in integer powers if the phase is considered dependent on the 
parameter $\eps=h^{1-p}$. 

We present another, \sc\ point of view of the scaling. Returning to the case in which $\mathfrak{m}$ also depends on $x$, equation \r{3.1} takes the form
\begin{equation}   \label{3.1a}
(-h^2\Delta-\lambda^2  +   h\nabla\cdot h^{1-p}(\mu(\lambda/h,x)- \mu(0,x))h\nabla )u=f(x,\lambda,h),
\end{equation}
where we put a source on the right. Since
$h^{1-p}\mu(\lambda/h,x)\sim
hC_p\lambda^{-p}m_0(x)$, equation~\r{3.1a} reduces to
\[
\big(-h^2\Delta-\lambda^2 - h^{1-p} h\nabla\cdot \mu(0,x)h\nabla  + C_p h \lambda^{-p}h\nabla\cdot m_0(x)  h\nabla \big)u=f(x,\lambda,h),
\]
ignoring the lower-order terms.

This shows that, semiclassically, the new scaling reduces the memory term to a real-valued term that is lower by the fractional degree $1-p$, plus a complex-valued term of degree one lower. The latter affects the leading amplitude, whereas the former is stronger and also affects the geometry.

\section{Geometric optics}\label{sec_GO}

Assume that $\mathfrak{m}$ satisfies \r{m0}, and the support condition 
\begin{equation}   \label{supp}
\supp_x\mathfrak{m}\subset B(0,R)
\end{equation}
with some $R>0$.  

We assume \eqref{m-int} throughout this section. The monotonicity
condition \eqref{m-monotone} and the higher-order conditions
\eqref{m-int-k} are not needed for the formal construction; they enter
in Theorem~\ref{thm_exp} through the well-posedness and error estimates.
We assume $\sigma=1$, see also section~\ref{sec_var}. 
We carry out the construction in $\R^n$. The same construction applies in a bounded domain $\Omega$ on any time interval during which the waves under consideration do not meet $\partial\Omega$.

From now on, space and time are the dimensionless variables obtained by using the half-amplitude propagation distance and the corresponding travel time as their respective units, see the Introduction and section~\ref{sec_rescale}. We again write $\mathfrak m$ for the transformed kernel profile. Introduce
\be{varepsilon}
\varepsilon:=h^{1-p} 
\ee
for the dimensionless coefficient of the memory term. Accordingly, $\varepsilon\mathfrak m(s,x)=h^{1-p}s^{p-1}m(s,x)=(h/s)^{1-p}m(s,x)$; see \r{m0}. We also set
\begin{equation}   \label{sigma_eps}
\sigma_\eps := 1+\eps\sigma_{\mathfrak{m}}(x) =1+h^{1-p}\sigma_{\mathfrak{m}}(x),
\end{equation}
Here $\sigma_{\mathfrak m}$ is defined in \r{sigma_m}, and $\sigma_\eps$ should not be confused with $\sigma_{\mathfrak m}$. In the geometric optics construction below, we view $h$ and $\eps$ as two independent small parameters. Then we modify \r{1} to
\be{1b}
\partial_t^2 u	-\Delta u - \eps\nabla \cdot \int_0^\infty \mathfrak{m}(s,x)\big( \nabla u(t,x) - \nabla u(t-s,x)\big)\,\d s=0, \quad \text{in $\R_+\times\R^n$}. 
\ee
Equation \r{1b}  can be recast as
\be{1c}
\partial_t^2 u	- \Delta_{\sigma_\eps }  u + h^{1-p} \nabla\cdot \int_0^\infty  \mathfrak{m}(s,x)\nabla u(t-s,x) \,\d s=0, \quad \text{in $\R_+\times\R^n$},
\ee
where 
\[
 \Delta_{\sigma_\eps }:= \nabla\cdot(1+h^{1-p}\sigma_{\mathfrak{m}}(x) )\nabla.
\] 
 We introduce $P_h$ and $M$ by writing \r{1c} as $P_hu:= (\partial_t^2-\Delta_{\sigma_\eps }) u +h^{1-p}M u=0 $.

 Choose $\chi\in C_0^\infty(\R^n)$ with $\supp\chi\cap \overline{B(0,R)}=\emptyset$. Set 
\begin{equation}   \label{t0}
t_0 \coloneq \dist (\supp\chi, B(0,R))>0. 
\end{equation}
We choose the initial conditions  
\begin{equation}   \label{IC1}
u|_{t=0}=2\chi(x) e^{\i\lambda  x\cdot\omega /h}, \quad u_t|_{t=0}= 0 , \quad u_-(t)|_{|x|<R}=0, \; t<0.
\end{equation}
This corresponds to $\eta^0(s)=0$ in \r{meq2d}, where $\eta^t$ is an equivalence class. For a short time $t\in[0,t_0]$ before the $x$-support of $u$ enters $B(0,R)$, the solution can be expressed by the standard geometric optics expansion and splits into two parts: $u_\text{out}$, propagating away from $B(0,R)$, and $u_\text{in}$, propagating toward it; see Figure~\ref{fig_incoming}. We are interested in the latter. 

 Fix $T>0$. By finite speed of propagation, both for the transport construction and for the exact solution, we may choose $R_T>R$ so that all causal supports under consideration for $0\le t\le T$ are contained in $B(0,R_T)$; see also section~\ref{sec_wp}.

\begin{figure}[tbp]
\includegraphics{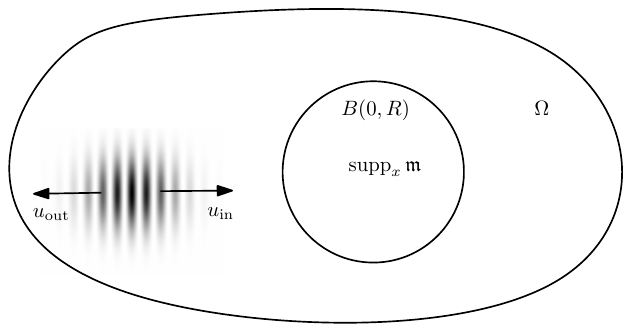}
\caption{The initial wave packet $u_\textrm{in}$ probing $\mathfrak{m}$.}\label{fig_incoming} 
\end{figure}

The ansatz for $u$ now is
\be{ansatz}
u_\text{in}\sim e^{\i\lambda \phi(t,x,\eps)/h} a(t,x,h,\eps), \quad a\sim a_0(t,x,\eps) +ha_1(t,x,\eps)+\cdots,
\ee
as $0<h\ll1$, where 
we allow $\phi$ and $a_j$ to depend on $\eps$, while $a$ also depends on $\lambda$. Their dependence on $\eps$ is smooth up to $\eps=0$; in fact, they have Taylor expansions at $\eps=0$. The dependence on $\lambda$ satisfying \r{lambda} is uniform. 

\subsection{Construction of the phase} \label{sec_phase}

We take $\phi$ to be the solution of the eikonal equation 
\begin{equation}   \label{eik1}
(\partial_t \phi)^2 - \sigma_\eps(x) |\nabla\phi|^2=0, \quad x\in B(0,R_T), \qquad \phi|_{x\cdot\omega<-R}= -t+x\cdot\omega.
\end{equation}
We choose 
\begin{equation}   \label{phase}
\phi = -t+ \psi(x,\eps).
\end{equation}
Then $\psi$ would satisfy the eikonal equation
\begin{equation}   \label{eik2}
\sigma_\eps|\nabla\psi|^2=1, \quad x\in B(0,R_T), \qquad \psi|_{x\cdot\omega<-R}= x\cdot\omega.
\end{equation}

Standard Hamilton--Jacobi theory provides a smooth solution of \r{eik2} in $B(0,R_T)$ when $0<\eps\ll1$.

\begin{proposition} 
\label{prop:phase_smooth}

There exists $\varepsilon_0>0$ such that, for every $0\le \varepsilon\le \varepsilon_0$,
the eikonal equation \r{eik2} 
admits a solution $\psi\in C^\infty(B(0,R_T))$, which also depends smoothly on $\varepsilon \in [0,\varepsilon_0]$ and on $\omega$. Consequently, $\psi$ admits a Taylor expansion in $\varepsilon$,
\[
\psi(x,\eps)
\sim \sum_{j\ge0}\varepsilon^j\psi_j(x)
\qquad \text{in } C^\infty(B(0,R_T)),
\]
with
\[
\psi_0(x)=x\cdot\omega,
\qquad
\psi_j(x)=\frac1{j!}\,\partial_\varepsilon^j\psi(x,\eps)\big|_{\varepsilon=0},
\]
and the coefficients satisfy
\begin{equation}\label{phi_rec_short}
2\,\omega\cdot\nabla \psi_j
=
(-1)^j\sigma_{\mathfrak{m}}^j
-
\sum_{\ell=1}^{j-1}\nabla\psi_\ell\cdot\nabla\psi_{j-\ell},
\qquad j\ge1,
\end{equation}
together with
\[
\psi_j(x)=0
\qquad \text{for } x\in B(0,R_T),\quad x\cdot\omega<-R.
\]
\end{proposition}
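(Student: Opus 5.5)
We solve \r{eik2} by the method of characteristics, treating $\eps$ as a parameter, and read off smooth dependence on $(\eps,\omega)$ from smooth dependence of ODE solutions on parameters.

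\textbf{Characteristics.} Write $n_\eps:=\sigma_\eps^{-1/2}$, so that \r{eik2} reads $|\nabla\psi|=n_\eps$. The Hamiltonian is $H(x,\xi)=\tfrac12(\sigma_\eps(x)|\xi|^2-1)$. Take initial data on the hyperplane $\{x\cdot\omega=-R\}$, which lies outside $\supp\sigma_{\mathfrak m}$ by \r{supp}. The initial points are $x=y-R\omega$ with $y\perp\omega$, with $\xi=\omega$ and $\psi=-R$. Solve
\[
\dot x=\sigma_\eps\xi,\quad \dot\xi=-\tfrac12|\xi|^2\nabla\sigma_\eps,\quad \dot\psi=\xi\cdot\dot x=\sigma_\eps|\xi|^2=1 .
\]
This gives a map $F_\eps(y,\tau)=x(\tau;y,\eps,\omega)$. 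For $\eps=0$ we have $\sigma_0\equiv1$, so $F_0(y,\tau)=y+(\tau-R)\omega$. This is an affine diffeomorphism from $\omega^\perp\times\R$ onto $\R^n$, and on the half-space $x\cdot\omega<-R$ it reproduces $\psi=x\cdot\omega$.

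\textbf{Perturbation.} The main obstacle is to show that $F_\eps$ remains a global diffeomorphism onto $B(0,R_T)$, i.e.\ that no caustics form, and that this holds uniformly. Restrict to the compact set of $(y,\tau)$ whose image under $F_0$ meets a neighborhood of $\overline{B(0,R_T)}$. The coefficients depend smoothly on $\eps$, so $F_\eps\to F_0$ in $C^k$ on that compact set, uniformly in $\omega\in\S^{n-1}$.

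A $C^1$-small perturbation of a diffeomorphism is injective with nonvanishing Jacobian on a slightly smaller compact set. This follows from the inverse function theorem together with a standard uniform-injectivity argument, which uses that $F_0$ is affine and $\|DF_\eps-DF_0\|$ is small. For $|y|$ large the rays never meet $\supp\sigma_{\mathfrak m}$ and stay straight, so nothing happens there. Moreover, $\dot x\cdot\omega>0$ for small $\eps$, so the rays sweep across $B(0,R_T)$ monotonically. This yields $\eps_0>0$ and a smooth inverse $F_\eps^{-1}$, jointly smooth in $(x,\eps,\omega)$ by the implicit function theorem.

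\textbf{Definition of $\psi$.} Set $\psi(x,\eps)=-R+\tau(F_\eps^{-1}(x))$. The standard Lagrangian argument shows that $\nabla\psi=\xi$ along the rays, because the initial manifold is isotropic and $H=0$ on it. Hence $\sigma_\eps|\nabla\psi|^2=1$. Since $\xi$ and $x$ evolve freely for $x\cdot\omega<-R$, we get $\psi=x\cdot\omega$ there.

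\textbf{Taylor expansion.} Joint smoothness gives the Taylor expansion $\psi\sim\sum_j\eps^j\psi_j$ in $C^\infty(B(0,R_T))$. To get \r{phi_rec_short}, write \r{eik2} as $|\nabla\psi|^2=\sigma_\eps^{-1}=\sum_j(-1)^j\eps^j\sigma_{\mathfrak m}^j$. This series is valid for $\eps\|\sigma_{\mathfrak m}\|_\infty<1$ after shrinking $\eps_0$. Substituting the Taylor series and comparing $\eps^j$ coefficients, with $\nabla\psi_0=\omega$, gives
\[
2\omega\cdot\nabla\psi_j+\sum_{\ell=1}^{j-1}\nabla\psi_\ell\cdot\nabla\psi_{j-\ell}=(-1)^j\sigma_{\mathfrak m}^j .
\]
Finally, $\psi_j=0$ on $\{x\cdot\omega<-R\}$ because $\psi(\cdot,\eps)=x\cdot\omega$ there for every $\eps$, so all $\eps$-derivatives vanish. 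Conversely, these transport equations along the lines in direction $\omega$, with zero data, determine the $\psi_j$ uniquely, which is consistent with the expansion.
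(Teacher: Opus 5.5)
Your proposal is correct and follows essentially the paper's route: characteristics from an incoming hyperplane where $\sigma_{\mathfrak m}=0$, smooth dependence of the characteristic flow on $(\eps,\omega)$ giving the Taylor expansion, and comparison of $\eps$-coefficients, where your expansion of $\sigma_\eps^{-1}$ yields exactly \eqref{phi_rec_short}. The differences are only in presentation: you take the initial plane $\{x\cdot\omega=-R\}$ for each $\omega$ and make the no-caustics perturbation argument more explicit, whereas the paper fixes a plane $\Sigma_-$ orthogonal to a reference direction $\omega_0$ and uses a finite covering of $\S^{n-1}$.
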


\begin{proof} Fix $\omega_0\in \S^{n-1}$ and set
\[
\Sigma_-:=\{x\in\mathbb R^n;\ x\cdot\omega_0=-R\}.
\]
On $\Sigma_-$, for $\omega\in\S^{n-1}$ close enough to $\omega_0$, prescribe the incoming data
\[
\psi|_{\Sigma_-}=x\cdot\omega|_{\Sigma_-}.
\]
Since $\sigma_{\mathfrak{m}}=0$ near $\Sigma_-$, the hypersurface $\Sigma_-$ is noncharacteristic for
\eqref{eik2}, and the corresponding characteristic system is a smooth perturbation,
for $\varepsilon$ small, of the free system associated with $\psi_0(x)=x\cdot\omega$.
By the method of characteristics, there exists $\varepsilon_0>0$ such that for
$0\le \varepsilon\le \varepsilon_0$ the problem
\eqref{eik2} has a unique smooth solution in a neighborhood
of the incoming rays through $\Sigma_-$, with $\nabla \psi |_{\Sigma_-} =\omega$.  
 For $\varepsilon_0$ small enough, this neighborhood, together with the free incoming region $x\cdot\omega<-R$, contains $B(0,R_T)$. This proves existence on $B(0,R_T)$ for $\omega$ close to $\omega_0$. By compactness of $\S^{n-1}$, a finite covering gives an $\varepsilon_0$ uniform in $\omega$; the corresponding local constructions agree by uniqueness.

Since the characteristic ODE depends smoothly on the parameters $\varepsilon$ and $\omega$, the corresponding flow, and hence the constructed solution $\psi$, depend smoothly on them as a
$C^\infty(B(0,R_T))$--valued map. Therefore, for every $N\ge0$,
\be{psi_exp}
\psi(x,\eps)=\sum_{j=0}^N \varepsilon^j\psi_j(x)+\varepsilon^{N+1}r_N(x,\varepsilon),
\ee
(depending smoothly on $\omega$ as well), with $r_N$ smooth, where
\[
\psi_j(x)=\frac1{j!}\,\partial_\varepsilon^j\psi(x,\eps)\big|_{\varepsilon=0}.
\]
Substituting this expansion into \eqref{eik2} and comparing coefficients of
$\varepsilon^n$ yields \eqref{phi_rec_short}. Since
\[
\psi|_{\varepsilon=0}(x)=x\cdot\omega,
\]
we get $\psi_0=x\cdot\omega$. The incoming condition in \eqref{eik2} implies
\[
\psi_j(x)=0
\qquad \text{for } x\in B(0,R_T),\quad x\cdot\omega<-R,\quad j\ge1.
\]
This completes the proof.
\end{proof}


Observe that in particular, in the coordinates $(s,y)$ for which $x=y+s\omega$,  $y\cdot\omega=0$,
\be{psi_1}
\psi_1(y,s)
=
-\frac12\int_{-\infty}^s \sigma_{\mathfrak{m}}(y+\tau\omega)\,d\tau.
\ee


\subsection{Deriving the transport equations} \ 

The initial conditions imply easily 
\be{ICa0}
a|_{t=0}= \chi(x).
\ee
Moreover, $a_0= \chi(x-t\omega)$ for $t\in[0,t_0]$. The subsequent terms $a_j$, $j\ge1$ are obtained by solving the standard transport equations (when $\mathfrak{m}=0$) with sources containing the previously computed ones, and they all have supports in $\supp\chi+t\omega$ for $t\in[0,t_0]$. In particular, $a\mathfrak{m}=0$ for such $t$, i.e., the interaction with the memory has not started yet. We anticipate that this support condition would be preserved along the orbits, perhaps perturbed by an $o(1)$ error as $h\to0$.  With this in mind, we define a relatively open set $U\subset [0,T]\times\R^n$ so that 
\begin{equation}   \label{suppU}
U \coloneq \big\{(t,x)|\; x\in \supp\chi+B(0,t_0/2)+ t\omega, \; t\in[0,T]\big\},
\end{equation}
We assume below that   all amplitude terms are supported in $U$ for $0<h\ll1$.  
Eventually, we will show that this assumption is consistent with the parametrix we construct.  The reason for this is to make sure that the memory integral does not integrate all the way to $s=t$ (or even beyond that).

We compute 
\begin{align}   \label{ephi1a}
e^{-\frac{\i \lambda }{h}\phi}(\partial_t^2-\Delta_{\sigma_\eps } ) e^{\frac{\i \lambda }{h}\phi}&= 
 \lambda^2h^{-2}  (-(\partial_t\phi)^2+ \sigma_\eps |\nabla\phi|^2)\\
& \qquad + \frac{2\i  \lambda }h (\partial_t\phi \partial_t - \sigma_\eps \nabla\phi \cdot\nabla  ) 
+ \frac{\i \lambda }{h} (\partial_t^2- \Delta_{\sigma_\eps }) \phi
+ (\partial_t^2-\Delta_{\sigma_\eps } ).
\end{align}
This explains the choice of the eikonal equation \r{eik1}, and assuming that $\phi$ solves it, the above simplifies to
\begin{equation}   \label{ephi2a}
e^{-\frac{\i \lambda }{h}\phi}(\partial_t^2-\Delta_{\sigma_\eps } ) e^{\frac{\i \lambda }{h}\phi}= 
- \frac{2\i \lambda }h ( \partial_t + \sigma_\eps \nabla\psi \cdot\nabla  ) 
-\frac{\i \lambda }{h} \Delta_{\sigma_\eps } \psi
+ (\partial_t^2-\Delta_{\sigma_\eps } ).
\end{equation}
We need to expand $e^{-\frac{\i \lambda }{h}\phi} M e^{\frac{\i \lambda }{h}\phi}$ next. We have
\begin{align}
  e^{-\frac{\i \lambda }{h}\phi} M e^{\frac{\i \lambda }{h}\phi} a & = e^{-\frac{\i  \lambda }{h}\psi} e^{\frac{\i  \lambda }{h}t} \nabla \cdot \int_0^\infty \mathfrak{m}(s,x) e^{-\frac{\i  \lambda }{h}  (t-s )} \nabla e^{\frac{\i  \lambda }{h} \psi} a(t-s,x,h)\,\d s \\ \label{Ih}
&=   e^{-\frac{\i  \lambda }{h}\psi}  \nabla \cdot \int_0^\infty \mathfrak{m}(s,x) e^{\frac{\i  \lambda }{h} s} \nabla e^{\frac{\i \lambda }{h} \psi} a(t-s,x,h)\,\d s .
\end{align}
Set 
\be{Ih0}
(I_h b)(t,x):=\int_0^\infty \mathfrak{m}(s,x)e^{\frac{\i \lambda  s}{h}}\,b(t-s,x,h)\,\d s .
\ee
Then
\[
e^{-\frac{\i \lambda }{h}\phi} M e^{\frac{\i \lambda }{h}\phi} a
=
e^{-\frac{\i \lambda }{h}\psi}\nabla\cdot\!\left(
e^{\frac{\i \lambda }{h}\psi}
\Big(\frac{\i \lambda }{h}\nabla\psi\, I_h a+I_h(\nabla a)\Big)\right),
\]
hence
\[
e^{-\frac{\i \lambda }{h}\phi} M e^{\frac{\i \lambda }{h}\phi} a
=
\Big(\nabla+\frac{\i \lambda }{h}\nabla\psi\Big)\cdot
\Big(\frac{\i \lambda }{h}\nabla\psi\, I_h a+I_h(\nabla a)\Big).
\]

We need first to understand the asymptotic behavior of \r{Ih0},  
where $b$ is smooth, supported in $U$, and bounded uniformly in $h$ together with all of its derivatives. By \r{supp} and \r{suppU}, $\mathfrak m(s,x)b(t-s,x,h)=0$ whenever $t-s\le t_0/2$. Thus, as a function of $s$, this product is supported in $[0,t)$ and has no singularity at $s=t$. Extended as zero outside $[0,t]$, it can be considered as the \sc\ Fourier transform (up to a change of the sign in the exponent). Its only singularity is at $s=0$, where it is of the kind $\sim m_0(x)s^{p-1}$.

\begin{lemma}\label{lemma_1}
Let $I_h$ be as in \r{Ih0}, 
where $b$ is smooth, supported in $U$, and bounded uniformly in $h$ together with all of its derivatives. 
Then $I_h b$ has an asymptotic expansion in powers of $(h/\lambda)^{p+k}$, $k=0,1,2,\dots$, with principal term
\[
I_h b = C_p m_0(x) b(t,x,h)  (h/ \lambda )^p   + O(h^{p+1})\quad \text{in $C^k$},
\] 
for every $k$. 
More generally, 
\[
I_h b\sim
\sum_{j\ge0} 
C_{p+j}(h/ \lambda )^{p+j}\,\frac1{j!}
\sum_{\ell=0}^j
\binom{j}{\ell}
m_\ell (x)\,
(-\partial_t)^{j-\ell}b(t,x,h)  \quad \text{in $C^k$}. 
\]
\end{lemma}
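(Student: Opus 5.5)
We treat $I_hb$ as a one-sided oscillatory integral whose only non-smooth point is the endpoint $s=0$, and expand it through the Fourier transform of the homogeneous distributions $s_+^{p+j-1}$, exactly as in the proof of Lemma~\ref{lemma_m_exp}. Fix $(t,x)$ and set $F(s)=F(s;t,x,h):=m(s,x)b(t-s,x,h)$. By \r{supp}, \r{suppU} and the remark preceding the lemma, $F(s)=0$ for $s\ge t-t_0/2$. Hence $s\mapsto s_+^{p-1}F(s)$ is compactly supported in $[0,t)$, and it is smooth except at $s=0$. Taylor expansion in $s$ at $s=0$ gives
\[
F(s)=\sum_{j=0}^{N}\frac{s^j}{j!}F_j+s^{N+1}R_N(s),\qquad F_j=\sum_{\ell=0}^j\binom{j}{\ell}m_\ell(x)(-\partial_t)^{j-\ell}b(t,x,h),
\]
by the Leibniz rule, since $\partial_s^{j-\ell}b(t-s)=(-\partial_t)^{j-\ell}b(t-s)$. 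This already accounts for the combinatorial coefficients in the claimed expansion.

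Next we localize near $s=0$. Choose a cutoff $\rho\in C_0^\infty(\R)$ with $\rho=1$ near $0$ and support in $(-t_0/4,t_0/4)$. The piece $\int(1-\rho(s))\mathfrak m(s,x)e^{\i\lambda s/h}b(t-s)\,\d s$ has a smooth, compactly supported integrand in $s$. Repeated integration by parts in $s$, using $e^{\i\lambda s/h}=(h/(\i\lambda))\partial_se^{\i\lambda s/h}$, shows it is $O(h^\infty)$, uniformly for $\lambda$ in \r{lambda}.

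For the localized piece we insert the Taylor expansion. Each model term is
\[
\int_0^\infty \rho(s)s^{p+j-1}e^{\i\lambda s/h}\,\d s=\frac{1}{j!}\cdot j!\,\big(s_+^{p+j-1}\big)\hat{\ }(-\lambda/h)+O(h^\infty),
\]
where the identity comes from \r{Cp0} applied with $p$ replaced by $p+j$ (the homogeneity formula is valid for non-integer exponents $>-1$), and the $O(h^\infty)$ is the error from dropping $\rho$, which is smooth away from $0$. The model term therefore equals $C_{p+j}(h/\lambda)^{p+j}+O(h^\infty)$. Multiplying by $F_j/j!$ produces exactly the $j$-th term of the claimed series; for $j=0$ it gives $C_pm_0b\,(h/\lambda)^p$.

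The remainder $\int\rho\, s^{p+N}R_N(s)e^{\i\lambda s/h}\,\d s$ is the main point. The integrand is $C^{N}$-type: it lies in $C^{N}$ with derivatives of order up to about $N$ integrable at $0$. Integrating by parts $N$ times, with all boundary terms vanishing because each derivative of $s^{p+N}R_N$ of order $<N+1$ vanishes at $s=0$, yields a bound $O(h^{N})$. This suffices after taking $N$ larger than the required order.

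For $C^k$ bounds in $(t,x)$, all derivatives fall on $m(s,x)$ and $b$. These are smooth and uniformly bounded in $h$, so the same argument applies to $\partial_{t,x}^\alpha I_hb$. The required uniformity rests on $R_N$ being expressed through the Taylor integral formula in terms of derivatives of $F$, which are bounded uniformly in $h$ by hypothesis. The step needing the most care is this remainder estimate together with its uniformity in $(t,x,h,\lambda)$, in particular checking that the support cutoff near $s=t$ is uniform on $U$.
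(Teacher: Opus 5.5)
Your proposal is correct and follows essentially the same route as the paper: you Taylor-expand the smooth part of the integrand at $s=0$, reduce each term to the Fourier transform of $s_+^{p+j-1}$ as in Lemma~\ref{lemma_m_exp}, control the remainder, and treat $(t,x)$-derivatives with the same argument. The differences are cosmetic: you expand the product $m(s,x)b(t-s,x,h)$ at once via the Leibniz rule, whereas the paper expands $m$ and $b$ separately, and you spell out the cutoff, the $O(h^\infty)$ bound away from $s=0$, and the integration-by-parts remainder estimate, which the paper only sketches.
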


\begin{proof} 
It is enough to consider $k=0$.	Write 
\begin{align}
b(t-s,x) &=\sum_{k=0}^N \frac{ b^{(k)}(t,x)}{k!}(-s)^k + R_N'(t,s,x)s^{N+1}, \\
  m(s,x) &= \sum_{l=0}^N \frac{ m_l(x)}{l!} s^l + R_N''(s,x)s^{N+1}.
\end{align}
Leaving the remainders aside for a moment, we are left with the analysis of integrals of the type
\[
J_h^{(j)}  := \int_0^\infty  e^{\i \lambda  s /h}s^{j+p-1}\chi(s) \,\d s,
\]
$j=k+l=0,1,2,\dots$, where $\chi(s)=1$ near $s=0$, and it is compactly supported.   
The asymptotic is the same as that of 
\[
\int   e^{\i s \lambda  /h}s_+^{j+p-1} \,\d s = C_{p+j}(h/\lambda)^{p+j}, \quad 0<h\ll1, \;0<\lambda,
\]
see the proof of Lemma~\ref{lemma_m_exp}. The remainders are handled by the same argument but $\chi$ would depend smoothly on $(t,x)$ now. 
\end{proof}

\begin{remark}
	The expansion of $I_h b$ can be written down in the following compact form
\be{Ih1}
I_h b = \mu(\lambda/h -D_t)b	 ,
\ee
where $D_t=-i\partial_t$. Here, we formally consider $\lambda$ and $D_t$ as fixed parameters, and $h$ as a small one. Then we write
\[
(\lambda/h-D_t)^{-p-j} = (h/\lambda)^{p+j} (1-\lambda^{-1}hD_t)^{-p-j},
\]
and use the binomial series for the latter term to make sense of \r{Ih1} using Lemma~\ref{lemma_m_exp}. In fact, \r{Ih1} can also be derived, at least formally, through Fourier transforming $I_h b$. 
\end{remark}

By Lemma~\ref{lemma_1}, 
\begin{align}
I_h a&= C_p (h/ \lambda )^p m_0 a
+
C_{p+1}(h/ \lambda )^{p+1}(m_1 a-m_0\partial_t a)
+O(h^{p+2}),\\
I_h(\nabla a) &=
C_p (h/ \lambda )^p m_0 \nabla a
+ C_{p+1}(h/ \lambda )^{p+1}(m_1\nabla a-m_0\partial_t\nabla a)
+O(h^{p+2}).
\end{align}
Therefore, see \r{Ih},
\begin{align}
\frac{\i \lambda }{h}\nabla\psi\,I_h a+I_h(\nabla a) &=
\i C_p (h/ \lambda )^{p-1}m_0 a\,\nabla\psi\\
&\qquad +
C_p (h/ \lambda )^p m_0\nabla a
+
\i C_{p+1}(h/ \lambda )^p(m_1a-m_0\partial_t a)\nabla\psi
+O(h^{p+1}),
\end{align}
and hence
\begin{align} 
e^{-\frac{\i \lambda }{h}\phi} M e^{\frac{\i \lambda }{h}\phi} a
&=
- C_p (h/ \lambda )^{p-2} m_0 |\nabla\psi|^2 a \\
&\quad
+ \i C_p (h/ \lambda )^{p-1}
\Big(
2m_0\,\nabla\psi\cdot\nabla a
+a\,\nabla m_0\cdot\nabla\psi
+m_0 a\,\Delta\psi
\Big) \\
&\quad
- C_{p+1} (h/ \lambda )^{p-1} |\nabla\psi|^2
\big(m_1a-m_0\partial_t a\big)
+O(h^p). \label{ma}
\end{align}
Since $C_{p+1}=\i p\,C_p$, this can also be written as
\vskip -25pt
\begin{align}
h^{1-p}e^{-\frac{\i\lambda }{h}\phi} M e^{\frac{\i\lambda}{h}\phi} a
&=- C_p h^{-1}\lambda^{2-p} m_0 |\nabla\psi|^2 a \\
&\quad \label{eMe}
+ \i C_p \lambda^{1-p} 
\Big(
2m_0\,\nabla\psi\cdot\nabla a
+a\,\nabla m_0\cdot\nabla\psi
+m_0 a\,\Delta\psi
-p\,|\nabla\psi|^2(m_1 a-m_0\partial_t a)
\Big)\\
&\quad 
+O(h).
\end{align}
The $O(h)$ estimate above holds in every $C^k$, and it is uniformly bounded as long as $a$ is bounded in $C^{k+2}$. 
We combine this with \r{ephi2a} to expand 
\[
e^{-\frac{\i\lambda }{h}\phi}(\partial_t^2-\Delta_{\sigma_\eps}  + h^{1-p}M ) e^{\frac{\i\lambda }{h}\phi}
\]
with $a$ as in \r{ansatz}. We treat $\eps$ and $h$ as independent parameters for now. We get the first transport equation
\be{Tr1}
 \mathcal T a_0=0, \quad \mathcal T a:=( \partial_t + \sigma_\eps \nabla\psi \cdot\nabla  )a + \frac12 (\Delta_{\sigma_\eps}\psi)a -\frac12 \i C_p  \lambda^{1-p} m_0  \sigma_\eps^{-1}a 
\ee
with the initial condition $a_0|_{s=0}=\chi(x)$, see \r{ICa0}. For future reference, we note that when $a_0$ solves \r{Tr1}, we have $P_h  e^{\i\lambda\phi/h}a_0= O(1)$ in any semiclassical $C^k_h$ norm.

For fixed $\omega$, let $\gamma_{y,\omega}$ be the integral curve of
$\sigma_\eps\nabla\psi$ satisfying
\[
\dot\gamma_{y,\omega}(s)
=
\sigma_\eps(\gamma_{y,\omega}(s))
\nabla\psi(\gamma_{y,\omega}(s)),
\qquad \gamma_{y,\omega}(0)=y.
\]
We introduce coordinates $(s,y)$ by
\[
(t,x)=(s,\gamma_{y,\omega}(s)).
\]
For $0<\eps\ll1$, this map is a diffeomorphism on any fixed compact set
under consideration. In these coordinates,
\[
\partial_t+\sigma_\eps\nabla\psi\cdot\nabla_x = \partial_s, \quad \psi =y\cdot\omega+s. 
\]
When $\eps=0$, we have $x=\gamma_{y,\omega}(s) = y+s\omega$, and $ \partial_t + \sigma_\eps \nabla\psi \cdot\nabla = \partial_t +  \omega\cdot\nabla = \partial_s$. The solution of \r{Tr1}, in the $(s,y)$ coordinates, is
\begin{align}\label{a0}
a_0(s,\gamma_{y,\omega}(s)) &= \chi(y) \exp\Big(
-\frac12\int_0^s \Delta_{\sigma_\eps}\psi(\gamma_{y,\omega}(\tau))\,\d\tau\\
&\qquad +\frac{\i}{2}C_p\lambda^{1-p} \int_0^s m_0(\gamma_{y,\omega}(\tau))\sigma_\eps^{-1}(\gamma_{y,\omega}(\tau))\,\d\tau
\Big).
\end{align}
The particular case $\eps=0$, which approximates \r{a0} up to $O(\eps)$, is
\be{a001}
a_0 (t,x) = \exp \left( 
\frac{\i}{2}\,  C_p \lambda^{1-p}
\int_0^t
m_0(\,x-\tau \omega)\,d\tau 
\right)\chi(x-t\omega), \quad\eps=0,
\ee
written in the $(t,x)$ coordinates. 

Equating the $h^0$ powers, we get the second transport equation
\begin{align}
\mathcal T a_1& =  -\frac{\i \lambda^{-1}}{2}(\partial_t^2-\Delta_{\sigma_\eps})a_0 
+\frac{C_p}{2}\lambda^{-p} \Big(
2m_0\nabla\psi\cdot\nabla a_0
+a_0\nabla m_0\cdot\nabla\psi
+m_0 a_0\,\Delta\psi \\
&\qquad \label{Tr2}
-p\,\sigma_\eps^{-1}(m_1a_0-m_0\partial_t a_0)
\Big)
\end{align}
with an initial condition $a_1|_{s=0}=0$.  

Similarly, for $a_j$, $j\ge2$, we get 
\begin{equation}   \label{Tr3}
\mathcal T a_j = S[a_0,\dots,a_{j-1}],\quad a_j|_{s=0}=0,
\end{equation}
where $S$ depends on finitely many derivatives of $a_0,\dots,a_{j-1}$, on $\psi$, on $m_0,\dots,m_j$, and on $\lambda$ and $\eps$. Then $u_N := e^{\i\lambda\phi /h}\sum_{j=0}^{N}h^j a_j$ satisfies $P_hu_N= O(h^N)$ in every $C_h^k$, uniformly in $\lambda$ in \r{lambda}, with the incoming initial conditions induced by \r{IC1}.

\subsection{Expansion of each amplitude in powers of $\eps$}
Each amplitude term $a_k$ depends smoothly on $\eps$ through $\sigma_\eps$ and $\psi$. It therefore has a Taylor expansion in $\eps$ about $\eps=0$. Direct calculations using \r{psi_exp} show that, upon expanding the leading amplitude as
\[
a_0(x,\eps)\sim \sum_{j\ge0}\varepsilon^j a_0^{(j)}(x),
\]
we get the equation
\begin{equation}   \label{a0012}
(\partial_t+ \omega\cdot\nabla) a_0^{(0)} -\frac12 \i\lambda^{1-p} C_p m_0 a_0^{(0)}=0, \quad a_0^{(0)}|_{s=0}=\chi(x),
\end{equation}
whose solution is \r{a001}, and similar but nonhomogeneous ODEs for $a_0^{(j)}$, $j\ge1$, with the same transport operator on the left, a right-hand side depending on the previous coefficients, and zero initial conditions.

We formulate the main theorem of this section.

\begin{theorem}[The asymptotic expansion] \label{thm_exp} 
Assume that $\mathfrak{m}$ satisfies \eqref{m0}, \eqref{supp},
\eqref{m-monotone}, and \r{m-int-k} for every $k$. 
Fix $T>0$. Then problem \r{1b} with initial conditions \r{IC1} has a unique solution $u=u_\text{\rm in}+u_\text{\rm out}$ in $[0,T]\times\R^n$ for $0<h\ll1$. Here $u_\text{\rm out}$, determined up to $O(h^\infty)$ in any $C_h^k$, is the standard geometric optics solution corresponding to $\mathfrak{m}=0$ and propagating away from $\supp_x\mathfrak{m}$, while $u_\text{\rm in}$ is of the form \r{ansatz}, where $\phi(t,x,\eps)$ is as in \r{phase}, $\eps=h^{1-p}$, and $a_j=a_j(t,x,\eps)$ solve the transport equations \r{Tr1}, \r{Tr2}, and \r{Tr3}.
\end{theorem}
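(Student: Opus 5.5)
The proof has two parts: existence and uniqueness of the exact solution, and justification of the expansion. Existence and uniqueness of $u$ on $[0,T]\times\R^n$ follows from the well-posedness theory of Section~\ref{sec_wp}. That section gives a contraction semigroup on the history energy space, together with $C^k$--Sobolev regularity under the compatibility conditions. The data \r{IC1} have zero prehistory near $B(0,R)$ and $\chi$ supported away from $\supp_x\mathfrak m$, so the compatibility conditions hold trivially to every order. Since we work on a finite time interval, finite speed of propagation lets us replace $\R^n$ by $B(0,R_T)$ with Dirichlet conditions. The waves do not reach the boundary before time $T$, so the two problems agree there.

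Next comes the approximate solution. For $t\in[0,t_0]$ no memory interaction occurs, because $a\mathfrak m=0$ there. On this interval the free wave equation splits the data into $u_{\rm out}$ and $u_{\rm in}$, each given by the standard geometric optics expansion with phases $\mp t+x\cdot\omega$, modulo $O(h^\infty)$. The outgoing piece stays away from $B(0,R)$ for all $t\le T$, so the memory term annihilates it, and it remains the free geometric optics solution.

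For the incoming piece we use the phase $\phi=-t+\psi(x,\eps)$ from Proposition~\ref{prop:phase_smooth}. The amplitudes $a_j$ are obtained by solving \r{Tr1}, \r{Tr2} and \r{Tr3} along the curves $\gamma_{y,\omega}$. Each $a_j$ is supported in the tube $\supp\chi+\gamma$, which lies in $U$ for $\eps$ small, so the a priori support assumption behind Lemma~\ref{lemma_1} holds. We then apply Borel's lemma to form $a\sim\sum h^ja_j$ and set $u_N=e^{\i\lambda\phi/h}\sum_{j\le N}h^ja_j$. The computation leading to \r{eMe}, carried to all orders with Lemma~\ref{lemma_1}, gives $P_hu_N=:f_N=O(h^{N})$ in every $C^k_h$, uniformly in $\eps\in[0,\eps_0]$ and in $\lambda$ in the band \r{lambda}. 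Only after this do we substitute $\eps=h^{1-p}$.

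We then correct to an exact solution. Let $w=u-u_{\rm out}-u_N$. It solves \r{1b} with source $-f_N$, zero initial data, and zero prehistory, so all compatibility conditions hold. The regularity estimates of Section~\ref{sec_wp}, uniform in $0\le\eps\le\eps_0$, bound $\|w\|_{C^k H^m}$ by finitely many derivatives of $f_N$. These norms cost at most $h^{-k-m}$ relative to the $C^k_h$ norms, so $\|w\|=O(h^{N-N_0})$. Sobolev embedding then gives $w=O(h^\infty)$ in $C^k_h$, since $N$ is arbitrary. This proves the expansion of $u_{\rm in}$.

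The main obstacle is the claim that $f_N=O(h^N)$ holds uniformly, including the remainder in Lemma~\ref{lemma_1}. The memory integral runs over all $s\in[0,\infty)$, not only near $s=0$. Because amplitudes vanish for $t-s\le t_0/2$, we need the product $\mathfrak m(s,x)b(t-s,x)$ to be smooth in $s>0$, with derivatives bounded uniformly in $h$ and $\eps$. The remainders $R_N',R_N''$ then contribute $O(h^{N})$ by repeated integration by parts in $s$, and the only non-stationary contribution comes from the conormal point $s=0$. A second point to check is that the $\eps$-dependence of $\psi$ and $a_j$ keeps the supports inside $U$ uniformly for $\eps\le\eps_0$. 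This follows from smooth dependence of the flow $\gamma_{y,\omega}$ on $\eps$.
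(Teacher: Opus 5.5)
Your proposal is correct and follows the paper's proof. You build $u_N$ with $P_hu_N=O(h^N)$ uniformly in $\eps\in[0,\eps_0]$ and $\lambda$. You control the correction at a fixed order $K$ with the uniform-in-$\eps$ estimates of Corollary~\ref{cor_est}, losing only $h^{-K}$. You finish with Sobolev embedding, uniqueness in the weak class, and finite propagation speed to pass from $\Omega$ to $\R^n$.

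The only difference is at $t=0$. The paper takes $u_N$ equal to the free solution for $t<t_0$, so the residual vanishes near $t=0$ and the correction has all forward jets zero. In your version $f_N(0)\ne0$, and the Cauchy data of $w$ are only $O(h^N)$ rather than zero. So your claim that the compatibility conditions hold with zero prehistory is not literally true on all of $\Omega$. It holds because the mismatch lies outside $\supp_x\mathfrak m$, where the prehistory does not enter the equation and can be chosen to match the forward jets.
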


The expansions in \r{ansatz} are understood in $C^k$ for every $k$ after factoring out the oscillatory exponential, or equivalently in the semiclassical $C_h^k$ norms for $u_\text{\rm in}$, uniformly for $\lambda$ satisfying \r{lambda}.

The proof is provided in section~\ref{sec_just}. 

\subsection{Analysis of the asymptotic expansion} \label{sec_analysis}

By Theorem~\ref{thm_exp}, 
\[
u_{\rm in}(t,x) = e^{\i \lambda \phi(t,x,\eps)/h}a_0(t,x,\eps)+ O(h),
\]
where for $a_0$, we have \r{a0} in characteristic coordinates. In these ($\eps$-dependent) coordinates, $\phi = -t+s+y\cdot\omega$. This estimate is preserved under differentiation by powers of $h\p_t$ and $h\p_x$, and one can include lower-order terms as well. We can draw the following conclusions about the effect of the memory term:

\begin{itemize}
\item The geometry is modified from the flat geometry (speed one) to a curved one with the faster speed $\sqrt{\sigma_\eps} = 1+\frac12 h^{1-p}\sigma_{\mathfrak{m}}(x) + O(h^{2(1-p)})$; see \r{sigma_eps}.

\item There is an attenuation caused by $\Im C_p$ in \r{a0}, and the corresponding accumulated exponent is $\frac12\Im C_p \lambda^{1-p} (Bm_0)(s)$, where  $(Bm_0)(s) $ is the beam ray transform of $m_0$ appearing in \r{a0}.  
 
\item There is also a phase shift caused by $\Re C_p$ in \r{a0}.
\end{itemize}

The way we assess the phase speed is to write the real part of the phase, combining $\lambda \phi/h$ and the phase of $a_0$, as
\[
\Re\Phi:= \lambda (-t+s+y\cdot\omega)/h +\frac12 \Re C_p \lambda^{1-p}(Bm_0)(s), 
\]
where $(Bm_0)(s) $ is the beam ray transform of $m_0$ appearing in \r{a0}. To find the speed at which the constant levels move, we compute $\d s/\d t$ in $\Re\Phi=\text{const.} $ by implicit differentiation 
\begin{align}
\frac{\d s}{\d t} &= 1- \frac12 h (\Re C_p)\lambda^{-p}(Bm_0)'(s)+O(h^2)\\
&=  1-  \frac12 h\Gamma(p)\cos \frac{p\pi}2   \lambda^{-p} m_0(x)\sigma_\eps^{-1}(x) +O(h^2). 
\label{ph_speed}
\end{align}
The second term on the right-hand side of \r{ph_speed} is the correction to the phase speed of the wave packet relative to the speed $1$, which is the speed in the metric $\sigma_\eps^{-1}\d x$. In Euclidean terms, this term represents the principal part of the \textit{relative} correction of the speed $\sqrt{\sigma_\eps}$. Note  the dependence on the frequency $\lambda$, which shows that this correction causes frequency dispersion; higher frequencies propagate faster. 

Similar calculations allow us to estimate the phase shift:
\[
\delta s
=
-\frac12 h\,\Gamma(p)\cos \frac{\pi p}{2} \,
\lambda^{-p}(Bm_0)(s_0)
+O(h^2).
\]
It might be useful to look at the phase shift $\delta s/(2\pi h/\lambda)$ relative to the wavelength. That quantity is proportional to $\lambda^{1-p}$.

The dependence on $p$  deserves attention as well. When $p>0$ is close to $p=0$, the phase delay gets large because of the Gamma function; and when $p=1$ (no singularity), it is zero. The attenuation is equal to  $\frac12\Gamma(p)\sin(p\pi/2)$ times the beam transform of $m_0$. This coefficient is $\pi/4$ when $p=0$, i.e., when the memory kernel approaches the critical case. It is $1/2$ when $p=1$ (no singularity). 

\begin{figure}[tbp] 
\centering
\includegraphics[scale=0.5]{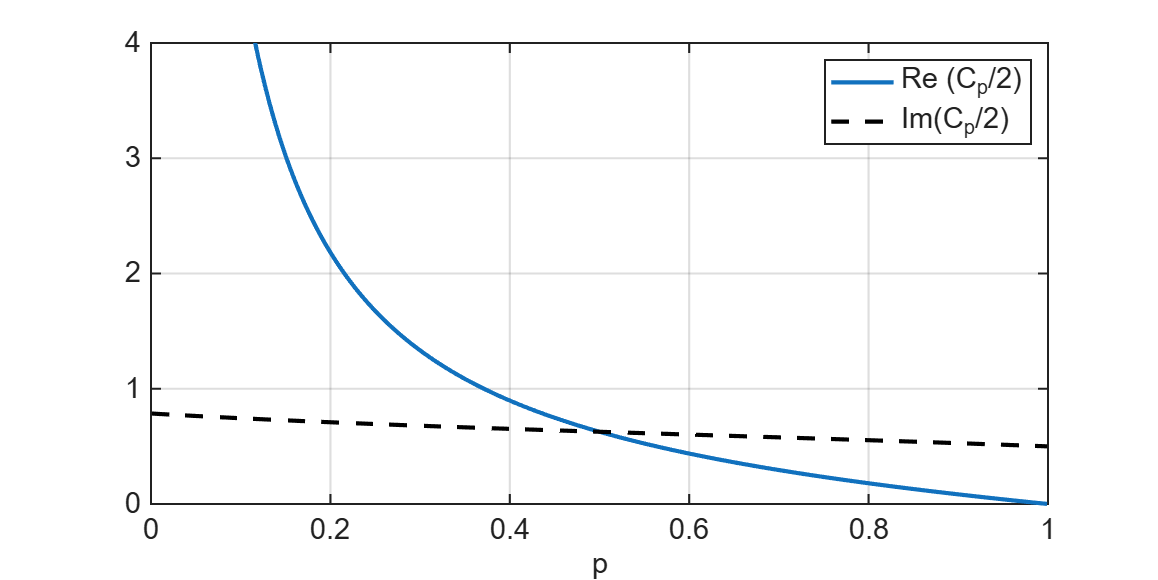}
\caption{The attenuation rate, $\Im C_p/2$, and the phase delay rate, $\Re C_p/2$, as functions of $p\in(0,1)$. The phase delay increases linearly in $t$ (when $m_0=\text{const.}$, for example), at the rate $\Re C_p/2$, while the amplitude decreases exponentially in $t$ at the rate $\Im C_p/2$.}
\end{figure}

\subsection{Approximate PDE}
For $0<h\ll1$, one can replace the non-local  integro-differential equation \r{1b} by the following damped \textit{differential} wave equation 
\begin{equation}   \label{approx_PDE}
u_{tt}-\i C_p\lambda^{1-p} m_0(x)\sigma_\eps^{-1} u_t-\Delta_{\sigma_\eps} u=0 
\end{equation}
with the first two initial conditions in \r{IC1}. 
Indeed, the standard geometric optics for \r{approx_PDE} leads to the same ansatz with the same leading transport equation, \r{Tr1}. The ansatz can be justified by the well-posedness of the damped wave equation. This yields the following.

\begin{theorem}[Leading comparison with the local approximate equation] \label{thm_appr} 
Fix $0<\lambda_{\min}\le \lambda\le \lambda_{\max}<\infty$, $T>0$, and let
$u_{\rm in}$ denote the incoming branch of the solution of \eqref{1b},
\eqref{IC1}. Let $u_{\rm app}$ denote the corresponding incoming
geometric-optics solution of \eqref{approx_PDE}, with the same incoming
initial amplitude and phase. Then, for every $k\ge 0$,
\[
u_{\rm in}(t,x)-u_{\rm app}(t,x)
=
e^{i\lambda\phi(t,x,\eps)/h} r(t,x,h,\eps),
\]
where
\[
\|r\|_{C^k([0,T]\times\R^n)}
\le C_k h.
\]
 Equivalently,
\[
u_{\rm in}=u_{\rm app}+O(h)
\]
in the semiclassical $C^k_h$ norm.
\end{theorem}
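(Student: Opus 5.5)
The plan is to construct geometric optics expansions for both equations with the same phase $\phi=-t+\psi(x,\eps)$ from Proposition~\ref{prop:phase_smooth}. We then observe that the two leading amplitudes coincide, so the difference is carried by the first-order amplitudes plus remainders. The exact solutions are tied to their expansions by Theorem~\ref{thm_exp} on the hereditary side and by energy estimates for the damped wave equation on the local side.

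\textbf{Step 1: the local equation.} Conjugating the operator in \eqref{approx_PDE} by $e^{\i\lambda\phi/h}$ and using \eqref{ephi2a}, the term $-\i C_p\lambda^{1-p}m_0\sigma_\eps^{-1}\partial_t$ contributes $-\i C_p\lambda^{1-p}m_0\sigma_\eps^{-1}(-\i\lambda/h)a$ at order $h^{-1}$. This is exactly $-\frac{\lambda}{h}C_p\lambda^{1-p}m_0\sigma_\eps^{-1}a$. Dividing by $-2\i\lambda/h$ gives the term $-\frac12\i C_p\lambda^{1-p}m_0\sigma_\eps^{-1}a$ in $\mathcal T$ of \eqref{Tr1}.

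Here we must check a consistency point. In the hereditary equation, the $h^{-1}$ term $-C_ph^{-1}\lambda^{2-p}m_0|\nabla\psi|^2a$ from \eqref{eMe} is the one that produces this coefficient, after using $|\nabla\psi|^2=\sigma_\eps^{-1}$. Both operators therefore yield the same eikonal equation \eqref{eik1} and the same first transport equation $\mathcal T a_0=0$, with $a_0|_{t=0}=\chi$. Consequently $a_0^{\rm app}=a_0$, given by \eqref{a0}.

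The local equation then gives $\mathcal T a_j^{\rm app}=S^{\rm app}[a_0,\dots,a_{j-1}^{\rm app}]$ with zero initial data. These equations differ from \eqref{Tr2}--\eqref{Tr3}, but the solutions are smooth, supported in $U$, and bounded in every $C^k$, uniformly in $\eps\in[0,\eps_0]$ and $\lambda$ in \eqref{lambda}. The bounds follow because the transport flow is a smooth $\eps$-perturbation of the free one on $B(0,R_T)$.

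\textbf{Step 2: justifying the local ansatz.} Set $u^{\rm app}_N=e^{\i\lambda\phi/h}\sum_{j\le N}h^ja^{\rm app}_j$. Its residual in \eqref{approx_PDE} is $O(h^{N})$ in every $C^k_h$.

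The damped wave operator $\partial_t^2-\Delta_{\sigma_\eps}+(\text{first-order term with bounded smooth coefficient})$ is strictly hyperbolic. Its coefficient $\lambda^{1-p}m_0$ is bounded independently of $h$, so standard energy estimates on $[0,T]$ hold with constants independent of $h$ and $\eps$; Gronwall absorbs the non-dissipative part. Commuting with $h\partial$ and applying Sobolev embedding in the semiclassical scaling converts an $O(h^N)$ residual into an $O(h^{N-C_k})$ error in $C^k_h$. Taking $N$ large shows that the exact $u_{\rm app}$ equals $e^{\i\lambda\phi/h}(a_0+ha_1^{\rm app})+O(h^2)$. Finite speed of propagation keeps everything in $B(0,R_T)$ and separates the incoming branch exactly as for \eqref{1b}.

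\textbf{Step 3: comparison.} By Theorem~\ref{thm_exp}, $u_{\rm in}=e^{\i\lambda\phi/h}(a_0+ha_1)+O(h^2)$ in $C^k_h$. Subtracting the two expansions gives $r=h(a_1-a_1^{\rm app})+O(h^2)$ after factoring out the phase. Since the factor $e^{\i\lambda\phi/h}$ costs powers of $h^{-1}$ under ordinary derivatives, the $C^k$ bound on $r$ itself should be read from the amplitude expansions directly. The tails $\sum_{j\ge2}h^{j}a_j$ are handled by the standard Borel-sum form of the expansion in Theorem~\ref{thm_exp}, and the same form is used for $u_{\rm app}$. This yields $\|r\|_{C^k}\le C_kh$.

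\textbf{Main obstacle.} The substantive point is the uniformity of the energy estimate for \eqref{approx_PDE} in $\eps$ and $\lambda$. The first-order coefficient is complex, so the equation is not dissipative, and its sign must be controlled by Gronwall rather than by a monotone energy. I expect this to be routine, since the coefficient is $O(1)$.

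A second point needs care: the statement that the remainder in Theorem~\ref{thm_exp}, after removing the phase, is $O(h^2)$ in plain $C^k$ rather than only in $C^k_h$. I would obtain this by truncating both expansions at a large $N$, applying the energy estimates only to the difference of each exact solution and its order-$N$ truncation, and then using the explicit amplitudes for the terms with $j\le N$.
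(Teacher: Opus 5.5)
Your proposal is correct and takes essentially the paper's route. Both arguments use the same phase and the same leading amplitude $a_0$ for both equations, compare the truncated expansions after removing $e^{\i\lambda\phi/h}$, and pass to the exact solutions via Theorem~\ref{thm_well_posed} (through Theorem~\ref{thm_exp}) and energy estimates for \eqref{approx_PDE}; your explicit matching of the $h^{-1}$ coefficients and your care about plain $C^k$ versus $C^k_h$ spell out what the paper leaves terse.

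One small correction: the damping coefficient $b=-\i C_p\lambda^{1-p}m_0\sigma_\eps^{-1}$ has $\Re b=\Gamma(p)\sin(\pi p/2)\lambda^{1-p}m_0\sigma_\eps^{-1}\ge 0$. So the standard energy for \eqref{approx_PDE} is in fact nonincreasing, i.e.\ the equation is genuinely damped forward in time, and Gronwall is unnecessary, though harmless.
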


\begin{proof}
As mentioned above, the standard geometric optics for \r{approx_PDE} yields the same kind of expansion, with the same phase, and with the same leading amplitude $a_0$. If $u_{\textrm{in},N}$ and $u_{\textrm{app},N}$ are the corresponding truncated asymptotic expansions, then for every $k$, $e^{-\i\lambda\phi/h}(u_{\textrm{in},N}-u_{\textrm{app},N})=O(h)$ in $C^k$ for $N\gg1$. Justification of the asymptotic expansion for \r{approx_PDE} is even easier, based on standard energy estimates. Then since the claim is true for the truncated asymptotic expansions, it is true for the exact solutions by Theorem~\ref{thm_well_posed} and its version for \r{approx_PDE}. 
\end{proof}

This theorem could have applications to time reversal in thermoacoustic tomography. One of the techniques there is to send the observed waves back to $t=0$ by reversing the time propagation; see, e.g., \cite{SU-thermo} and the references therein. The original memory wave equation \r{1b}, \r{IC1} cannot be solved backward in time because that would require knowledge of the future history with time inverted. On the other hand, \r{approx_PDE} can be solved backward. It gives a solution semigroup that increases exponentially in time, but we solve it only over a finite time interval. Time reversal for the damped wave equation has been implemented in \cite{Andrew13, AcostaM_16}. Their case corresponds formally to $\Re C_p=0$, $\Im C_p>0$, but the nonzero $\Re C_p$ creates only a phase shift and would not be an obstacle. 

Another benefit of using \r{approx_PDE} is that solving a PDE numerically with finite differences is much faster than solving the integro-differential equation \r{1b}. For the latter, computing $u(t+\Delta t,x)$ with a second-order finite-difference scheme requires the history not only of $u(t,x)$ and $u(t-\Delta t,x)$, but also at earlier times until $\mathfrak{m}$ becomes sufficiently small. This requires storing all those values and integrating them at every time step. Moreover, this integration must be performed carefully near $s=0$ because of the singularity there. There are no such issues with \r{approx_PDE}. 

A downside with this approach is that it works for monochromatic signals only, i.e., for $\lambda$ fixed. A broadband signal needs to be decomposed into monochromatic ones if we want to use \r{approx_PDE}. 
 
\subsection{The $p=1$ case} Assume $p=1$ now. Then the problem has its original form since for the prefactor we have $h^{1-p}=1$. By \r{m0}, $\mathfrak{m}(s,x) = m(s,x) \in C^\infty([0,\infty)\times\R^n)$. Lemma~\ref{lemma_1} still holds. We have $\eps=1$ in \r{varepsilon}, so it is not a small parameter anymore. The phase $\phi$ solves the eikonal equation \r{eik1} with $\sigma_1= 1+\sigma_{\mathfrak{m}}$; it is of the kind \r{phase} with $\psi$ solving \r{eik2}.  We assume that this equation is solvable in $B(0,R_T)$. The first transport equation \r{Tr1} has the same form
\be{Tr1a}
 \mathcal T a_0=0, \quad \mathcal T a:=( \partial_t + \sigma_1 \nabla\psi \cdot\nabla  )a + \frac12 (\Delta_{\sigma_1}\psi)a +\frac12  m_0\sigma_1^{-1}a,
\ee
where we used the fact that $C_p=i$ for $p=1$. 
Compared to the standard leading transport equation for $\Delta_\sigma$, we have the extra term  $\frac12  m_0\sigma_1^{-1}a$, which is positive, and as such causes damping. The expression \r{a0} for the leading amplitude reduces to 
\begin{align}\label{a01}
a_0(s,\gamma_{y,\omega}(s)) &= \chi(y) \exp\Big(
-\frac12\int_0^s \Delta_{\sigma_1}\psi(\gamma_{y,\omega}(\tau),1)\,\d\tau\\
&\qquad -\frac{1}{2}  \int_0^s m_0(\gamma_{y,\omega}(\tau))\sigma_1^{-1}(\gamma_{y,\omega}(\tau))\,\d\tau
\Big),
\end{align}
where $\psi(x,1)=\psi(x,\eps)$ for $\eps=1$.  
The effect of the memory therefore is to change the effective speed from $c=1$ to $c=\sqrt{\sigma_1}$, and to introduce attenuation at a rate  $\frac12  m_0\sigma_1^{-1}$.  The approximate PDE then would be
\begin{equation}   \label{approx_PDE3}
u_{tt}+  {m}_0(x)\sigma_1^{-1} u_t-\Delta_{\sigma_1} u=0 ,
\end{equation}
where $\sigma_1=1+\sigma_{\mathfrak{m}}$ as above. Observe that there is no $h$ or $\lambda$ in the PDE, the speed is changed from one to $\sqrt{1+\sigma_{\mathfrak{m}}}$, and there is a damping coefficient $m_0(x)$. There is no leading phase shift.

\subsection{A semiclassical approach} 
Set $\lambda =|\xi|$, $\omega=\xi/|\xi|$, where $\lambda$ is as in \r{lambda}. 
The incoming prehistory now is
\[
u|_{t\le0}= \chi\big(x-t\xi/|\xi|\big) e^{\i (-t|\xi|+x\cdot\xi)/h}. 
\]
The solution is given by Theorem~\ref{thm_exp} with the above change of variables. 

A more general version of this, involving a broadband signal, is to solve \r{1b} with initial conditions
\be{ICa}
u|_{t=0} = f_1(x,h), \quad u_t|_{t=0} = f_2(x,h),
\ee 
where $f_1$ and $f_2$ are compactly supported outside $\overline{B(0,R)}$ and semiclassically tempered, with $\WFH(f_1)$ and $\WFH(f_2)$ contained in $\R^n\setminus \overline{B(0,R)}\times \{\lambda_\textrm{min}< |\xi|< \lambda_\textrm{max}\}$.  

Since the wave equation has a positive and a negative speed, the solution naturally splits into four terms involving combinations of both speeds and both initial conditions; see, e.g., \cite{SU-thermo_brain}. We get 
\begin{align}   \label{u_par}
u(t,x) &= (2\pi h)^{-n}  \sum_{\rho=\pm }\int_{\lambda_\textrm{min}} ^{\lambda_\textrm{max}}\int_{\S^{n-1}} e^{\i \lambda \phi_{\rho}(t,x ,\omega,\eps)/h} \Big( a_{1,\rho}(t,x,\omega, \lambda, h,\eps)   (\mathcal{F}_hf_1)(\lambda\omega)\\
& \qquad + \frac{h}{\lambda} a_{2,\rho}(t,x,\omega, \lambda, h,\eps)   (\mathcal{F}_hf_2)(\lambda\omega)   \Big)\lambda^{n-1} \d  \lambda\, \d\omega+O(h^\infty),
\end{align}
where $\mathcal F_h$ is the \sc\ Fourier transform. 
Here, we changed the notation for the phases and for the amplitudes a bit to indicate the dependence on $\omega$ and $\lambda$ as well. Next, 
$\phi_{\pm}=\mp t+\psi_\pm$, see \r{phase}, and for every $\omega$, only one of $\phi_{\pm}$ can differ from its trivial choice $\mp t+x\cdot\omega$ because, for $|x|>R$, at least one of the rays $[0,\infty)\ni s\mapsto x+s\omega$ does not hit $B(0,R)$. All amplitudes are of order $0$ in $h$ and have two-scale expansions as above. The initial conditions for their transport equations can be derived from \r{ICa} as in \cite{SU-thermo_brain}. Equivalently, the amplitudes are polyhomogeneous symbols in integral powers of $h$, depending smoothly on $\eps=h^{1-p}$ in the Fr\'echet symbol topology up to $\eps=0$ and smoothly on $\lambda$ as in \r{lambda}. As semiclassical symbols, they belong to the standard symbol classes since $t$- and $x$-derivatives do not change the order of the symbol estimates. Formula \r{u_par} can be considered as a \sc\ FIO depending smoothly on $\eps=h^{1-p}$, formally regarded as an independent parameter. Its canonical relation is therefore also $\eps$-dependent. 

Since the test for $\WFH(u)$ includes $h$-independent cutoffs, one can show that 
\[
\WFH(u(t,\cdot),u_t(t,\cdot) )= C_0(t)\circ \WFH(f_1,f_2),
\]
where $C_0(t)$ is the canonical relation corresponding to $\eps=0$. This does not, however, capture the fractional effects described earlier, such as the phase shift and the $\eps$-dependent changes in the geometry.

\section{Numerical examples} We illustrate some of the effects with a few numerical simulations. In all of them, $\sigma=1$. 

\subsection{Phase shift and attenuation} 
In Figure~\ref{fig_phase}, we plot as a solid curve a one-dimensional solution moving from left to right, with the initial wave packet labeled $t=0$. Snapshots of the solution are also plotted at $t=T/2$ and $t=T$, where $T>0$ is the final time. For comparison, we plot the ``free'' solution corresponding to $\mathfrak{m}=0$ as a dashed curve. That solution simply translates from left to right. Finally, we plot the solution called ``intermediate,'' obtained by retaining $\sigma_\eps$ in \r{1c} but dropping the memory integral. The memory kernel $\mathfrak{m}$ is supported in the middle, and the gray shading illustrates its density. 

We observe first that the intermediate solution moves faster than the free one because its speed is $\sqrt{\sigma_\eps}>1$. The memory solution, however, moves slower than the intermediate one but still faster than the free one. This illustrates the phase shift computed above. The memory solution also attenuates exponentially. The frequency $\lambda$ is fixed here, and $p=0.5$. 

\begin{figure}[bpp]
\includegraphics[scale=0.62]{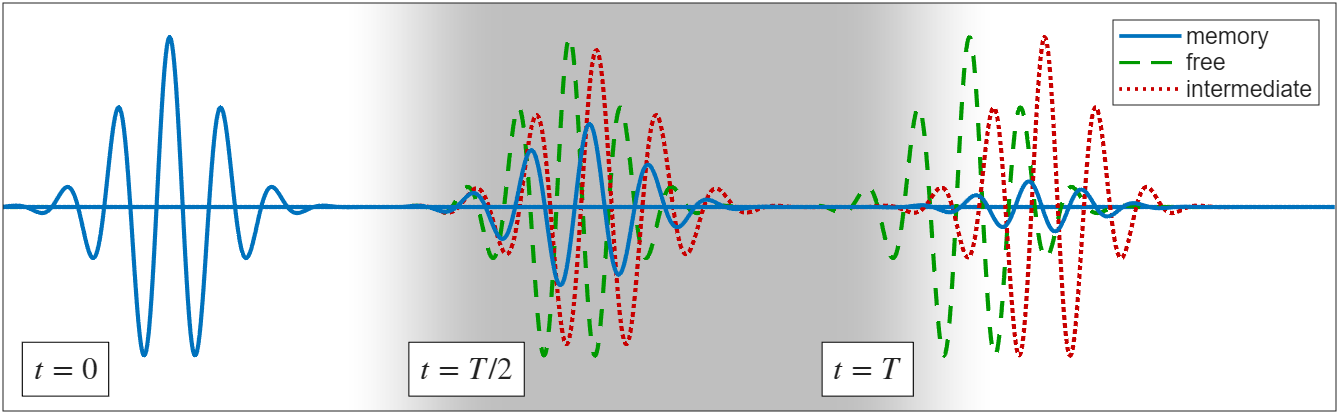}
\caption{$p=0.5$. Three solutions at $t=0, T/2, T$: (a) the memory solution as a solid curve; (b) the free one ($\mathfrak{m}=0$) dashed, and (c) the solution with $\sigma_\eps$ preserved but the memory integral in \r{1c} omitted, dotted. This illustrates the increase of the speed due to $\sigma_\eps$ and then the phase shift slowing down the memory solution compared to the dotted one.}  \label{fig_phase}
\end{figure}

\subsection{Dependence on the frequency $\lambda$ (dispersion)} 
In Figure~\ref{fig_phase_freq}, we demonstrate the dependence on $\lambda$. We plot the initial and the final states only, with $T$ slightly larger than in the previous example. We send two packets of frequencies having ratio $3$. In comparison, we plot the intermediate solution as in the previous example. The effect is subtle and it can be seen by the phase shift of the solid curve at $t=T$ compared to the dotted one. For the lower frequency packet, it is negative, a bit less than $\pi/2$ in absolute value as a fraction of the wavelength. For the higher frequency packet, it is still negative, a bit larger than $1.5$ times that in absolute value, which is consistent with being $(\lambda_2/\lambda_1)^{1-p} = \sqrt 3$ larger up to a higher order correction; see section~\ref{sec_analysis}. A direct comparison of the distance between the zeros or the peaks of both packets, at $t=0$ and $t=T$, shows a slight increase of the distance between the two. Another effect we observe is that the higher frequency attenuates faster. In Figure~\ref{fig_phase_freq2}, we plot two such packets independently, which demonstrates clearly that the net effect of the higher $\lambda$ is a faster speed.

\begin{figure}[tbp]
\includegraphics[scale=0.62]{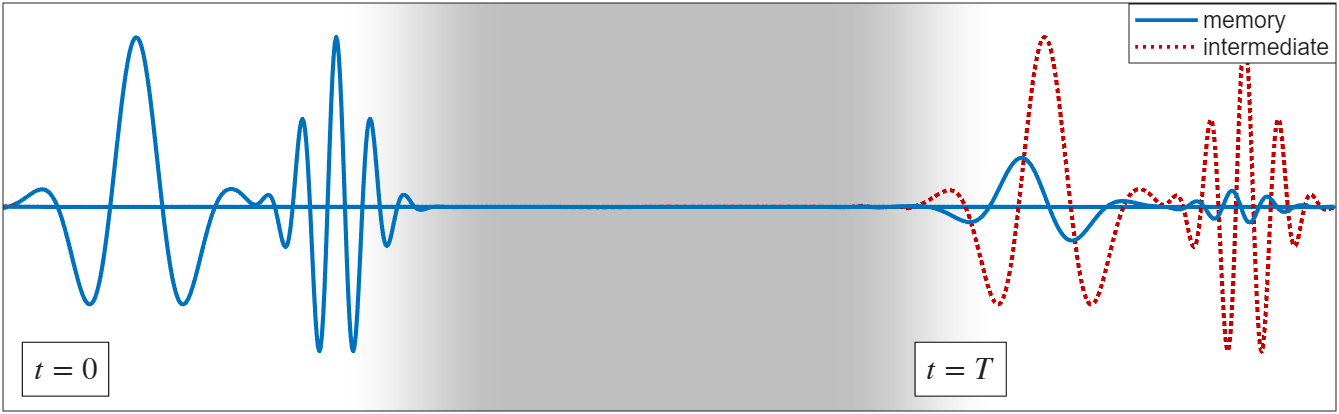}
\caption{$p=0.5$. Two packets at $t=0$ and $t=T$: the higher-frequency one moved a bit faster through the memory medium but experienced a negative phase shift larger in absolute value relative to the intermediate one, as a fraction of the wavelength. It attenuated more as well.}
 \label{fig_phase_freq}
\end{figure}

\begin{figure}[tbp]
\includegraphics[scale=0.62]{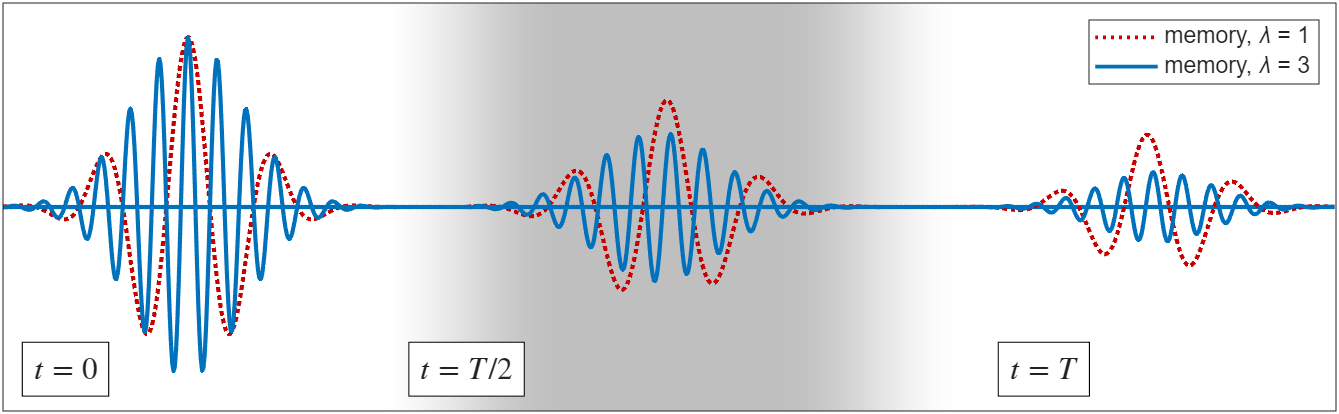}
\caption{$p=0.5$. Two packets at $t=0$, $t=T/2$, and $t=T$, plotted independently: the combined effect of the faster speed and the larger negative phase shift is still a faster speed of the higher-frequency packet.}
 \label{fig_phase_freq2}
\end{figure}

\subsection{2D propagation} 
In Figure~\ref{fig_2D}, we propagate an oscillatory plane wave in two dimensions. The $x$-support of the memory is in the middle of the computational domain. The plot clearly shows that the speed has changed from $1$ to $\sqrt{\sigma_\eps}$ in the middle and that the wave has attenuated. The phase shift can be seen by plotting a vertical cross-section through the middle, but then we are in the regime of the previous example. 

\begin{figure}[tbp]
\includegraphics[scale=0.32]{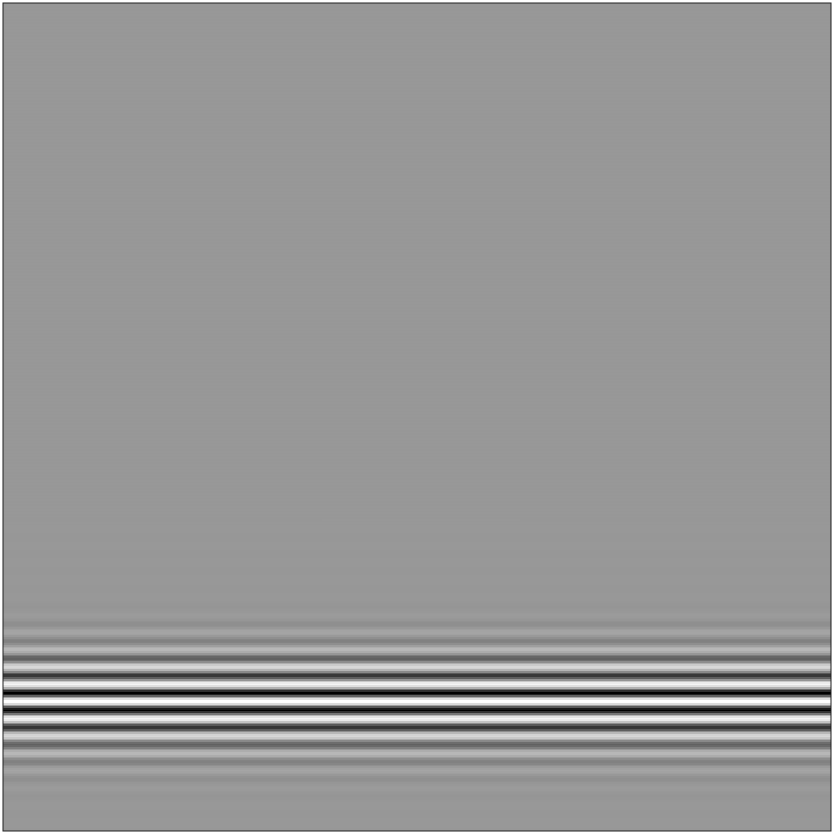}\hspace{40pt}
\includegraphics[scale=0.32]{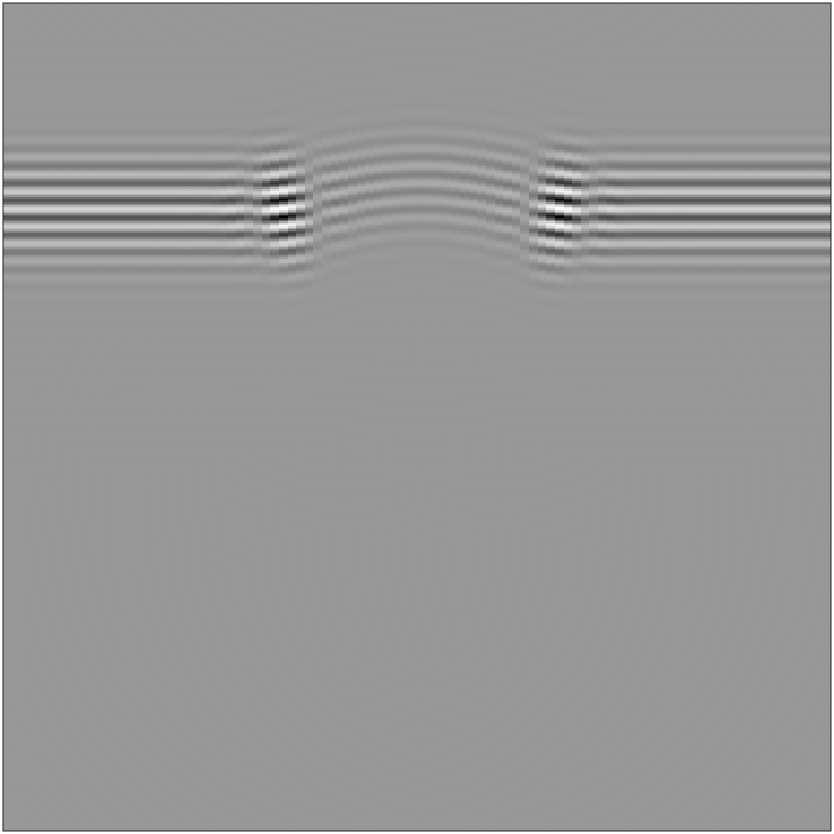}
\caption{$p=0.5$. A plane wave moving up. Left: the initial condition. Right: The solution at time $t=T$. The $x$-support of the memory is in the middle.}
 \label{fig_2D}
\end{figure}

\section{The Inverse Problem} \label{sec_IP}
\begin{theorem}[the inverse problem] \label{thm_IP} Let $0<p<1$. 
For every $\omega\in \S^{n-1}$, let $\chi\in C_0^\infty(\Omega)$ satisfy $\supp\chi\cap\overline{B(0,R)}=\emptyset$. Let $T>0$ be such that every point of $\overline{B(0,R)}$ lies on a ray $z+t\omega$, with $z\in\{\chi\ne0\}$ and $0<t<T$, and every such ray has exited $\overline{B(0,R)}$ by time $T$. Assume that $\Omega$ contains all the relevant incoming and outgoing ray segments up to time $T$. Then the family of corresponding solutions $u$ of \r{1b}, \r{IC1}, restricted to $[0,T]\times(\Omega\setminus B(0,R))$, for a fixed $\lambda$ in \r{lambda} and all $0<h\ll1$, uniquely determines $\sigma_{\mathfrak{m}}$ and the full jet $\{m_k(x)\}_{k\ge0}$ of $m(s,x)$ at $s=0$. 
\end{theorem}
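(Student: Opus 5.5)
We will use Theorem~\ref{thm_exp} to read off the phase and the amplitudes from the data and then invert X-ray transforms order by order. Fix $\lambda$ and $\omega$. On the exit region, i.e.\ on rays that have left $\overline{B(0,R)}$ before time $T$ and lie in $\Omega\setminus B(0,R)$, Theorem~\ref{thm_exp} gives $u=u_{\rm in}+u_{\rm out}$. Here $u_{\rm out}$ is known: it is the free solution, independent of $\mathfrak m$ modulo $O(h^\infty)$. Thus the data determine
\[
u_{\rm in}=e^{\i\lambda\phi(t,x,\eps)/h}\sum_j h^j a_j(t,x,\eps)
\]
modulo $O(h^\infty)$ for all $0<h\ll1$, with $\eps=h^{1-p}$. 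Expanding $\psi$ and the $a_j$ in $\eps$ produces a two-scale asymptotic series in $h^{k+(1-p)\ell}$ multiplied by $e^{\i\lambda(-t+x\cdot\omega)/h}$. We then argue that the coefficients of this series are uniquely determined by the data.

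The phase terms $\lambda h^{-1}\eps^j\psi_j=\lambda\psi_j h^{-1+j(1-p)}$ are large for $j(1-p)<1$, so they cannot be absorbed into the amplitude; this is the subtle point. To separate them, fix $x$ and $t$ on the exit set and consider the function of $h$
\[
F(h)=e^{-\i\lambda(-t+x\cdot\omega)/h}\,u(t,x).
\]
We will show that an expansion of the form
\[
\exp\Big(\i\lambda\sum_{j\ge1}\psi_j h^{-1+j(1-p)}\Big)\sum c_{k\ell}h^{k+(1-p)\ell},
\]
with real $\psi_j$ and $c_{00}=a_0^{(0)}\neq0$ (which holds where $\chi(y)\ne0$), is unique. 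The argument is to take a continuous branch of $\log F$ near $h=0$, which is possible since $|F|$ is bounded below. Its singular part is the finite sum of the terms $\i\lambda\psi_j h^{-1+j(1-p)}$ with $j(1-p)<1$. Distinct real powers give a unique asymptotic expansion, and when $j(1-p)=1$ the coefficient merges into the $h^0$ term, which is harmless. The branch ambiguity $2\pi\i\mathbb Z$ is handled by continuity in $h$. Hence the data determine every $\psi_j$ on the exit set, together with the full double series of the amplitude.

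Next we use \eqref{psi_1}. At an exit point $x=y+s\omega$ beyond the ball, $\psi_1$ equals $-\frac12$ times the X-ray transform of $\sigma_{\mathfrak m}$ along the line through $y$ in direction $\omega$. By the hypothesis on $T$, every line meeting $\overline{B(0,R)}$ is covered. Letting $\omega$ range over $\S^{n-1}$ gives the full X-ray transform of the compactly supported $\sigma_{\mathfrak m}$, which is injective, so $\sigma_{\mathfrak m}$ is recovered. The recursion \eqref{phi_rec_short} then shows that all $\psi_j$ are known, and hence so is $\psi(\cdot,\eps)$. From \eqref{a0012}, the leading coefficient at $\eps=0$ is
\[
a_0^{(0)}=\chi(y)\exp\Big(\tfrac{\i}{2}C_p\lambda^{1-p}\int m_0(y+\tau\omega)\,\d\tau\Big).
\]
Since $\Im C_p>0$, the modulus alone yields the X-ray transform of $m_0$, with no branch issue, and therefore $m_0$ is recovered.

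For the induction, suppose $\sigma_{\mathfrak m},m_0,\dots,m_{k-1}$ are known. Then all quantities in the transport hierarchy \eqref{Tr1}--\eqref{Tr3} and its $\eps$-expansion that involve only these are known. We isolate the first coefficient in which $m_k$ enters linearly. From Lemma~\ref{lemma_1}, $m_k$ enters through $C_{p+k}(h/\lambda)^{p+k}m_k$ in $I_h a$, so it first appears in the $\eps^0$ part of $a_k$ as a source term proportional to $C_{p+k}\lambda^{1-p-k}m_k\,a_0^{(0)}$. The coefficient $a_k^{(0)}$ solves
\[
(\partial_t+\omega\cdot\nabla-\tfrac{\i}{2}C_p\lambda^{1-p}m_0)a_k^{(0)}=c\,m_k a_0^{(0)}+(\text{known}).
\]
Dividing by the nonvanishing known $a_0^{(0)}$ and integrating along the line, the exit value gives $\int m_k(y+\tau\omega)\,\d\tau$ plus known terms. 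Even if one instead keeps $\eps$-dependent rays, which introduces a complex attenuation, injectivity follows from Novikov's result~\cite{Novikov}. This recovers $m_k$.

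The main obstacle is the bookkeeping in this inductive step: verifying that $m_k$ appears in the chosen coefficient with a nonzero multiplier ($C_{p+k}\ne0$ since $\Gamma(p+k)\ne0$), and that no other unknown term contributes at the same order. In particular, $h^{k}$ with $\eps^0$ must not coincide with some $h^{k'+(1-p)\ell}$ that carries $m_k$-dependence from a different source. If $p$ is rational such coincidences can occur. In that case I would treat $\eps$ and $h$ as independent only in the formal hierarchy and argue by the lowest-order principle: all terms with $k'<k$ involve only $m_0,\dots,m_{k-1}$, and the $\ell\ge1$ terms at level $k$ depend on $m_k$ only at positive $\eps$-order. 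Their $m_k$-dependence is therefore confined to total powers strictly larger than $h^k$, and $m_k$ is determined from the $h^k$ coefficient.
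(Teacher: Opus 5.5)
Your proposal is correct and follows essentially the same route as the paper: you remove the singular phase terms by a non-oscillation argument, recover $\sigma_{\mathfrak m}$ from the X-ray transform encoded in $\psi_1$, and recover $m_0$ from the leading amplitude at $\eps=0$ (your modulus argument is the paper's injectivity of $q\mapsto\exp(\frac{\i}{2}C_p\lambda^{1-p}q)$); you then get $m_k$ inductively from the nonzero linear source $\kappa_k\sigma_\eps^{-1}a_0m_k$ in the transport equation for $a_k$, integrated along Euclidean lines. The differences are presentational. You read the coefficients directly off the data via a branch of $\log F$ and the $\eps^0$ Taylor coefficients. That is why you need the rational-$p$ bookkeeping, and why your log argument isolates only the $\psi_j$ with $j(1-p)<1$, which is harmless since you use only $\psi_1$. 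The paper instead compares two kernels, shows that $e^{\i\lambda(\psi-\tilde\psi)/h}$ can converge only if $\psi-\tilde\psi=O(h)$, and lets $h\to0$ in $a_j-\tilde a_j=O(h)$, which avoids exponent coincidences altogether.
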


\begin{remark}
We could have formulated the theorem with data the Dirichlet-to-Neumann map on $[0,T]\times \p\Omega$. Also, for each $\omega$, we need to know $u$ only along the rays in $ \Omega\setminus B(0,R)$ once they have exited $B(0,R)$. Next, for every $\omega$ we may have several beams through that $B(0,R)$ , not necessarily one wide one illuminating the whole ball. Finally, some restricted set of directions would be enough as long as the corresponding X-ray transform is injective on that set. 
\end{remark} 

\begin{proof}[Proof of Theorem~\ref{thm_IP}] 
Suppose that two memory kernels $\mathfrak{m}$ and $\tilde{\mathfrak{m}}$, satisfying the assumptions of the theorem, produce the same exterior measurements for every sufficiently small $h$, and denote the quantities corresponding to the latter kernel by tildes. Fix an observation point $x\in \Omega\setminus B(0,R)$ on an outgoing ray whose free incoming point $z$ lies in the interior of $\{\chi\ne0\}$. Then, by \r{ansatz}, 
\[
e^{\i \lambda \psi(x,\omega,\eps)/h}  a_0(t,x,\eps) = e^{\i \lambda \tilde \psi(x,\omega,\eps)/h}  \tilde a_0(t,x,\eps) + O(h),
\]
where we divided by $e^{-\i \lambda t/h }$ to pass from $\phi$ to $\psi$. 
Although the leading amplitude $a_0(t,x,\eps)$ may itself be complex, its phase has no explicit $h$-dependence when $\eps$ is regarded as an independent parameter. After setting $\eps=h^{1-p}$, it depends smoothly on $h^{1-p}$ and, in particular, contains no oscillations at the scale $1/h$; see \r{a0}.  For sufficiently small $\eps$, the incoming points of the corresponding characteristics for both media remain in this set; hence \r{a0} shows that $a_0$ and $\tilde a_0$ are bounded away from zero along the observed ray. Setting $r(h):=\psi(x,h^{1-p})-\tilde\psi(x,h^{1-p})$ and dividing the preceding equality by $e^{\i\lambda\tilde\psi/h}a_0$, we obtain $e^{\i\lambda r(h)/h}=\tilde a_0/a_0+O(h)$, whose right-hand side has a limit as $h\to0+$. Let $N$ be the smallest integer such that $N(1-p)\ge1$. Since $r(0)=0$ and $r$ is smooth in $\eps$, Taylor's theorem gives $r(h)=\sum_{j=1}^{N-1}c_jh^{j(1-p)}+O(h^{N(1-p)})$. If any $c_j\ne0$, taking the first such $j$ shows that $e^{\i\lambda r(h)/h}$ cannot have a limit. Consequently, all these coefficients vanish and $r(h)=O(h)$, i.e., $\psi(x,\eps)-\tilde\psi(x,\eps)=O(h)$.
Write the corresponding Euclidean line as $y+\mathbb R\omega$, with $y\in\omega^\perp$, and take the observation after the ray has exited $B(0,R)$. By Proposition~\ref{prop:phase_smooth} and \r{psi_1}, 
\[
\eps \int_{-\infty}^\infty (\sigma_{\mathfrak{m}} - \tilde \sigma_{\mathfrak{m}})  (y+\tau\omega)\, \d\tau +O(\eps^2) = O(h). 
\]
Dividing by $\eps$, we get
\[
 \int_{-\infty}^\infty (\sigma_{\mathfrak{m}} - \tilde \sigma_{\mathfrak{m}})  (y+\tau\omega)\, \d\tau  = O(h^{\min(p,1-p)}). 
\]
Letting $h\to 0+$, we get that the X-ray transform of $\sigma_{\mathfrak{m}} - \tilde \sigma_{\mathfrak{m}}$ along those rays vanishes. Varying $\chi$ and $\omega$, we get $\sigma_{\mathfrak{m}} = \tilde \sigma_{\mathfrak{m}}$. 

Once we have recovered $\sigma_{\mathfrak{m}}$, we know the phase $\phi(x,\omega,\eps)$. 
Then $a_0(t,x,\eps) = \tilde a_0(t,x,\eps)+O(h)$, where we indicated the dependence on $\eps$ as well. Since that dependence is smooth, we can take the limit $h\to 0+$; then $\eps\to0+$ as well, to conclude $a_0(t,x,0) = \tilde a_0(t,x,0)$. Since $\chi(x-t\omega)\ne0$, we may cancel this common factor in \r{a001}. Moreover, $\Re(\i C_p)=-\Gamma(p)\sin(\pi p/2)<0$, so the map $q\mapsto\exp(\frac{\i}{2}C_p\lambda^{1-p}q)$ is injective on $\mathbb R$. Hence $a_0(t,x,0)=\tilde a_0(t,x,0)$ implies equality of the corresponding ray integrals of $m_0$ and $\tilde m_0$. Once the ray has exited $B(0,R)$, these are their full X-ray transforms, and therefore $m_0=\tilde m_0$. Since $\sigma_{\mathfrak m}$ and $m_0$ are now known, \r{a0} determines $a_0(t,x,\eps)$ for every sufficiently small $\eps$. Taking the expansion of the equality of the observed solutions one order further, subtracting the common leading term, and dividing by $h$, we obtain $a_1-\tilde a_1=O(h)$. More generally, once $m_0,\ldots,m_{j-1}$ have been recovered, the corresponding amplitudes $a_0,\ldots,a_{j-1}$ agree; subtracting these common terms from an expansion through order $h^{j+1}$ and dividing by $h^j$ gives $a_j-\tilde a_j=O(h)$.

Let $b_1=a_1-\tilde a_1$. Since $\sigma_{\mathfrak m}$ and $m_0$ have already been recovered, the two transport operators and the leading amplitudes agree. Subtracting \r{Tr2} therefore gives
\[
\mathcal T b_1=-\frac{pC_p}{2}\lambda^{-p}\sigma_\eps^{-1}a_0(m_1-\tilde m_1).
\]
Along a characteristic $\gamma$, the identity $\mathcal T a_0=0$ then yields
\[
\frac{\d}{\d s}\left(\frac{b_1}{a_0}\right)(s,\gamma(s))
=-\frac{pC_p}{2}\lambda^{-p}\sigma_\eps^{-1}(\gamma(s))(m_1-\tilde m_1)(\gamma(s)).
\]
The incoming value of $b_1$ is zero, whereas its outgoing value is $O(h)$. Letting $h\to0+$, and hence $\eps\to0+$, we obtain the ordinary X-ray transform of $m_1-\tilde m_1$ along the corresponding Euclidean line. Its injectivity gives $m_1=\tilde m_1$.

Next, assume that $\sigma_{\mathfrak m}$ and $m_0,\ldots,m_{j-1}$ have already been recovered, where $j\ge2$. The full expansion in Lemma~\ref{lemma_1} shows that the transport equation for $a_j$ has the form $\mathcal T a_j=F_j+\kappa_j\sigma_\eps^{-1}a_0m_j$, where $F_j$ is determined by the previously recovered coefficients and
\[
\kappa_j=\frac{\i C_{p+j}}{2j!}\lambda^{1-p-j}\ne0.
\]
Thus, for $b_j=a_j-\tilde a_j$, we have $\mathcal T b_j=\kappa_j\sigma_\eps^{-1}a_0(m_j-\tilde m_j)$. Since $\mathcal T a_0=0$, division by $a_0$ gives $\frac{\d}{\d s}(b_j/a_0)=\kappa_j\sigma_\eps^{-1}(m_j-\tilde m_j)$ along each characteristic. The incoming value of $b_j$ is zero and its outgoing value is $O(h)$. Letting $h\to0+$ therefore gives the ordinary Euclidean X-ray transform of $m_j-\tilde m_j$, and its injectivity yields $m_j=\tilde m_j$. The conclusion follows by induction.  
\end{proof} 

\section{Further remarks}
\subsection{Physical considerations} \label{sec_phys}
 
In the ultrasound literature, attenuation is commonly parametrized by a power law
\[
\alpha(f)=\alpha_0 f^y ,
\]
where the frequency $f$ is measured in MHz, $\alpha(f)$ is the attenuation coefficient in $\mathrm{dB}/\mathrm{cm}$, and $\alpha_0$ has units $\mathrm{dB}\,\mathrm{cm}^{-1}\,\mathrm{MHz}^{-y}$. The leading high-frequency attenuation rate behaves as $f^{1-p}$,  so that the experimental exponent $y$ corresponds to $y=1-p$. Thus the  linear attenuation law $y=1$ corresponds to the endpoint $p=0$, while intermediate values $0<y<1$ give genuinely fractional exponents $0<p<1$. Standard medical-ultrasound estimates place soft-tissue attenuation at about $0.3$--$0.8\,\mathrm{dB}/\mathrm{cm}$ at $1\,\mathrm{MHz}$, often rounded to the rule of thumb $\alpha(f)\simeq0.5 f_{\mathrm{MHz}}\,\mathrm{dB}/\mathrm{cm}$ \cite{Bushberg2012}. 
The law is then approximately linear, and the half-amplitude distance is
\[
d_{1/2}(f)=\frac{6.02}{0.5 f_{\mathrm{MHz}}}\,\mathrm{cm}
\simeq \frac{12}{f_{\mathrm{MHz}}}\,\mathrm{cm}.
\]
Thus $d_{1/2}\simeq 12\,\mathrm{cm}$ at $1\,\mathrm{MHz}$, $2.4\,\mathrm{cm}$ at $5\,\mathrm{MHz}$, and $1.2\,\mathrm{cm}$ at $10\,\mathrm{MHz}$. 
%
The AIUM \cite{AIUM2019} also uses homogeneous tissue models with attenuation $0.3\,\mathrm{dB}/(\mathrm{cm}\,\mathrm{MHz})$, and notes that other power laws, for example $0.75 f_{\mathrm{MHz}}^{1.5}\,\mathrm{dB}/\mathrm{cm}$ (which is outside our  range), are used in breast-imaging models. These values show that MHz ultrasound gives attenuation distances on macroscopic, clinically relevant scales. Intermediate exponents also occur experimentally: Chintada, Rau, and Goksel \cite{ChintadaRauGoksel2022}  fit the model $\alpha(f)=\alpha_0 f^y$ in ex-vivo bovine muscle and report, at $f_c=5.2\,\mathrm{MHz}$, $\alpha(f_c)=2.95\pm0.40\,\mathrm{dB}/\mathrm{cm}$, $y=0.54\pm0.29$, and, in a second configuration, $\alpha(f_c)=4.69\pm0.89\,\mathrm{dB}/\mathrm{cm}$, $y=0.76\pm0.31$. In our notation these correspond to $p\simeq0.46$ and $p\simeq0.24$, respectively. The associated half-amplitude distance is
\[
d_{1/2}(f)=\frac{20\log_{10}2}{\alpha(f)}\simeq \frac{6.02}{\alpha(f)}
\]
when $\alpha(f)$ is measured in $\mathrm{dB}/\mathrm{cm}$, and therefore the above bovine-muscle values give $d_{1/2}(f_c)\simeq2.0\,\mathrm{cm}$ and $d_{1/2}(f_c)\simeq1.3\,\mathrm{cm}$. Thus propagation over a distance comparable to the half-amplitude distance is a physically realistic regime in which the wave amplitude is reduced by an order-one factor, but is neither essentially unchanged nor completely damped.
 
In the bovine-muscle example at $f_c=5.2\,\mathrm{MHz}$, the wavelength is approximately $0.30\,\mathrm{mm}$, while the half-amplitude distance is about $1$--$2\,\mathrm{cm}$. 
Thus one half-amplitude distance contains roughly $30$--$70$ wavelengths.
Equivalently, after rescaling distances by the half-amplitude distance, the
dimensionless wavelength is about $(1.5\text{--}3)\times10^{-2}$. With the
normalization $2\pi h/\lambda$ for the wavelength, taking $\lambda\approx1$,
this corresponds to $h\approx2$--$5\times10^{-3}$. 
This is the dimensionless interpretation of $h$ used in the geometric optics construction: up to the fixed normalization $2\pi/\lambda$, it is the wavelength measured relative to the half-amplitude propagation distance.

Seismology provides another natural class of examples, although the
standard notation there is different.  Attenuation is often described by
the quality factor $Q$, or $q=Q^{-1}$.  For a monochromatic wave of
frequency $f$ and speed $c$, the amplitude decays approximately as
\[
A(L,f)=A(0,f)\exp\left(-\frac{\pi f L}{cQ(f)}\right),
\]
so that $d_{1/2}=cQ(f)\log 2/(\pi f)$.  For instance, with
$c=3\,\mathrm{km/s}$, $Q=50$, and $f=10\,\mathrm{Hz}$, one obtains
$d_{1/2}\simeq3.3\,\mathrm{km}$.  The commonly used constant-$Q$
model corresponds to attenuation proportional to $f$, hence to the
endpoint $p=0$ in our notation; see, for example, \cite{Kjartansson1979}.
However, frequency-dependent mantle attenuation is often written in the
form $q(\omega)\sim\omega^{-\alpha}$; then the wave attenuation
coefficient is proportional to $\omega^{1-\alpha}$, corresponding to
$p=\alpha$.  Reported values such as $\alpha\simeq0.1$--$0.3$
therefore give nonzero fractional exponents in the range 
$p\simeq0.1$--$0.3$ \cite{LekicMatasPanningRomanowicz2009}.

Other viscoelastic materials obeying power laws are amorphous polymers and elastomers, see, e.g., \cite{Mainardi2010}.

\subsection{Rescaling the variables} \label{sec_rescale}
Set $\tilde t=at$, $\tilde x=bx$, and $\tilde s=as$, and define $\tilde u(\tilde t,\tilde x)=u(\tilde t/a,\tilde x/b)$. Then equation~\ref{1} preserves its form with coefficients
\[
\tilde\sigma(\tilde x)
=
\frac{b^2}{a^2}
\sigma\left(\frac{\tilde x}{b}\right), \quad 
\tilde{\mathfrak m}(\tilde s,\tilde x)
=
\frac{b^2}{a^3}
\mathfrak m\left(\frac{\tilde s}{a},\frac{\tilde x}{b}\right).
\]
Then
\[
\tilde{\mathfrak m}(\tilde s,\tilde x)
=
\tilde s^{p-1}
\tilde m(\tilde s,\tilde x),
\qquad \text{with\quad }
\tilde m(\tilde s,\tilde x)
=
\frac{b^2}{a^{p+2}}
m\!\left(\frac{\tilde s}{a},\frac{\tilde x}{b}\right).
\]

We have already used a rescaling implicitly to normalize the constant
background speed to one.  If $\tilde t=at$ and
$\tilde x=bx$, then $\tilde\sigma=\frac{b^2}{a^2}\sigma$, 
and the normalization $\tilde\sigma=1$ requires $a^2=b^2\sigma$. The original phase would be $\lambda(-t+x\cdot\omega/\sqrt{\sigma})/h$, with wavelength $2\pi h\sqrt{\sigma}/\lambda$. Under this normalization, the phase in the new variables becomes $\lambda(-\tilde t+\tilde x\cdot\omega)/(ah)$. We set $\tilde h=ah$. The wavelength in the tilded variables is then $2\pi\tilde h/\lambda$.

Up to an inessential constant, the half-amplitude distance in the rescaled variables is
\[
\tilde d_{1/2}
\sim
\frac{(ah)^{1-p}}{[\tilde m]} = 
\frac{\tilde h^{1-p}}{[\tilde m]},
\]
where $[\tilde m]$ denotes a typical size of $\tilde m(0,\cdot)$. 
 Choosing the remaining common scale so that the half-amplitude distance is of order one gives
\[
[\tilde m]\sim \tilde h^{1-p}.
\]
This relation describes the size of the coefficient in the new units; it does not require the material coefficient in the original units to depend on the wavelength.

To make the change of units explicit, keep the original material fixed and let $\ell$ denote the wavelength parameter in the original units. Once we have achieved $\sigma\sim1$, with $[m]\sim1$, the half-amplitude propagation time is of order $\ell^{1-p}$, so we consider the interval $[0,\ell^{1-p}T]$ with $T>0$ fixed.
We rescale both $t$ and $x$ to preserve $\sigma\sim1$ and make this interval $[0,T]$ in the new variables. Thus $\tilde t=\ell^{p-1}t$ and $\tilde x=\ell^{p-1}x$, i.e., $a=b=\ell^{p-1}$. The semiclassical parameter in the new variables is therefore $h=a\ell=\ell^p$, while the transformed kernel is
\[
\tilde{\mathfrak m}(\tilde s,\tilde x)
=\ell^{1-p}\mathfrak m(\ell^{1-p}\tilde s,\ell^{1-p}\tilde x)
=h^{1-p}\tilde s^{p-1}m(\ell^{1-p}\tilde s,\ell^{1-p}\tilde x).
\]
This change of variables measures distance in units of the half-amplitude distance and time in units of the corresponding travel time. It preserves the wave speed and makes the propagation scale of order one. Since it also rescales the spatial and temporal profile of the kernel, it is a change of units rather than merely a weakening of the memory in fixed coordinates.
After absorbing the rescaled arguments into a redefined smooth profile and removing the tildes, this is precisely the dimensionless form with coefficient $h^{1-p}$ used above.

\subsection{Variable $\sigma$.} \label{sec_var}
We could have worked with a nonconstant $\sigma=\sigma(x)$ without additional difficulty. Indeed, even when $\sigma=1$, the geometric optics construction already uses the variable speed $\sqrt{1+\eps\sigma_\mathfrak{m}}$; one would instead use $\sqrt{\sigma+\eps\sigma_\mathfrak{m}}$. The inverse problem remains solvable as long as $\sigma^{-1}\d x^2$ is a simple metric in $\overline{B(0,R)}$. 
The choice $\sigma=1$, however, simplifies the exposition. Allowing variable $\sigma$ also covers the equivalent formulation in which $\nabla u(t,x)$ does not appear inside the memory integral, since its contribution can be absorbed into the instantaneous coefficient by replacing $\sigma$ with $\sigma+\eps\sigma_{\mathfrak m}$. For the unscaled model, this corresponds formally to $\eps=1$. 

\subsection{Initial conditions in the memory region.} The problem we solved is essentially a near-field scattering one: probing a compactly supported (in the spatial variable) memory $\mathfrak{m}$ with waves coming from outside the support. One may want to understand wave propagation when the initial disturbance is within the $x$-support of the memory. In some special cases, such as when $\mathfrak{m}$ is independent of $x$ and nonsingular in $s$, it is known that there are also standing waves, i.e., waves with zero phase velocity. The analysis of this phenomenon in our setting requires additional effort. The difficulty is most apparent when a disturbance is introduced abruptly at $t=0$ inside the spatial support of $\mathfrak m$, while the prehistory is kept zero. For $0<p<1$, this mismatch produces fractional powers in the formal expansion near $t=0$, so the integer-order regularity and compatibility conditions of Theorem~\ref{thm_well_posed} generally fail. Internal initial data accompanied by a suitably matched prehistory may nevertheless satisfy those conditions. When $p=1$, this particular fractional-power obstruction is absent.

\section{Well-posedness}\label{sec_wp}
In this section, we prove that a nonhomogeneous version of \r{1}, see \r{meq3}, is solvable in appropriate spaces, and that sufficiently smooth, compatible initial conditions $(u_0,v_0,u_-)$ and a source $f$ lead to a smooth solution depending continuously on them at increasing levels of regularity. The main steps are the following. 

\begin{itemize}
\item We formulate the problem as a $3\times 3$ system $\dot z=Lz+g$ for $z=(u,u_t,\eta)$; see \r{meq2a}--\r{meq2d} below, initially with $g=0$. We follow mostly \cite{Chep_Pata}, which is based on ideas introduced by Dafermos \cite{Dafermos_70, Dafermos_76}. The memory term in \cite{Chep_Pata}, however, is regular and $x$-independent. Existence of a weak solution then follows from semigroup theory; see Theorem~\ref{thm_wp1}. The equation can also be placed in the general framework of hyperbolic Volterra equations of variational type; see, for example, \cite[Chapter~II, Section~6.7]{Pruss1993}. This theory provides an alternative route to energy well-posedness under related positivity and coercivity assumptions. We use the history-space formulation because it retains the prescribed prehistory as part of the state and directly yields a contraction estimate that is of independent interest.

\item We prove in Theorem~\ref{thm_well_posed} that those weak solutions are strong when the data $(u_0,v_0,u_-,g)$, $g=(0,f,0)$, are sufficiently regular and compatible. This is a nontrivial step, unlike what happens when $\mathfrak{m}=0$. The reason is that $D(L^k)$ is not directly related to the standard Sobolev spaces. We solve an elliptic Volterra problem in Lemma~\ref{lemma_V} to make the connection.

\item We derive energy-like estimates for the higher-order derivatives of $u$ in terms of the data $(u_0,v_0,u_-,f)$, uniformly for $\eps\in[0,\eps_0]$; see Corollary~\ref{cor_est}. 

\item Finally, we apply Theorem~\ref{thm_well_posed} and Corollary~\ref{cor_est} to justify the asymptotic expansion derived earlier. Proposition~\ref{pr_u_N} gives a unique smooth exact solution and shows that it is close to the asymptotic solution together with all its derivatives. 
\end{itemize}

We list our assumptions on $\mathfrak{m}(s,x)$ in addition to \r{m0}. We consider it defined in $\R_s\times \R^n_x$ with a compact $x$-support, as we postulated in \r{supp}. 
First, a natural assumption in viscoelastic theory is
\begin{equation}   \label{m-monotone}
\frac{\d}{\d s} \mathfrak{m}(s,x)\le0.
\end{equation}
This is not needed for the construction of the asymptotic solution, probably not needed for well-posedness but it is required below to show that the solution is given by a contraction semigroup. It is a physical condition; in fact, many papers assume that $\mathfrak{m}$ is the Laplace transform of a measure supported in $[0,\infty)$. 

Next, we initially assume
\be{m-int}
 \mathfrak m \in L^1((0,\infty);L^\infty_x),
\ee
but for stronger solutions, we will assume eventually that for $k\geq0$ depending on the order, 
\be{m-int-k}
\partial_x^\beta\mathfrak m
\in L^1((0,\infty);L^\infty_x),
\qquad |\beta|\leq k.
\ee
In the geometric optics part, we assume \r{m0}, which implies that the singularity at $s=0$ is integrable, while \r{m-int} gives us some control as $s\to\infty$, uniformly in $x$. 

In what follows, until section~\ref{sec_just}, we assume \r{m0}, \r{m-monotone} and \r{m-int}.

\subsection{The semigroup approach.} 
We start with proving that \r{1} has a weak solution using the semigroup theory. Here, we drop $\eps$; in other words, we incorporate it in $\mathfrak{m}$.

\subsubsection{Converting to a system}
Notice first that when the initial conditions $(u,u_t)$ at $t=0$ are compactly supported, say on $B(0,R')$ with $R'>R$, then $\supp_x u \subset B(0, R'+ct)$, where $c>0$ is such that $c^2\ge \sigma$. This follows by considering the memory integral as a source, which is supported in $B(0,R)$, and applying Duhamel's principle. Since we are interested in solutions propagating for $t\in [0,T]$ with $T>0$ fixed, we can always take a bounded domain $\Omega\subset\R^n$, large enough, so that such solutions would never hit $\p\Omega$. This allows us to study the well-posedness for $x\in \Omega$, with some convenient boundary conditions, and we choose the Dirichlet ones.

Consider \r{1}. Assume for now that all functions involved are regular enough. 
Recalling $\eta^t$ in \r{eta}, clearly
\[
(\partial_t+\partial_s) \eta^t(s) = u_t(t),\quad  t\ge0, \; s\ge0.
\]
We claim that problem \r{1} is equivalent to 
\begin{subequations}
\begin{align}
&\hspace{2em}
\partial_t^2 u(t)
-\Delta_\sigma u(t)
-\nabla \cdot \int_0^\infty
\mathfrak{m}(s,x)\nabla \eta^t(s)\,\d s
=0,\quad t>0,\; x\in\Omega, \label{meq2a} \noeqref{meq2a}
\\
&\hspace{2em}
(\partial_t+\partial_s)\eta^t(s)
=u_t(t), \quad t>0,\; s>0, \label{meq2b} \noeqref{meq2b}
\\
&\hspace{2em}
\eta^t(0)=0,\quad t>0,
\quad\text{(boundary condition at $s=0$)}, \label{meq2c} \noeqref{meq2c}
\\
&\hspace{2em}
u(0)=u_0,\quad u_t(0)=v_0,\quad
\eta^0(s)=\eta_0(s),\quad s\ge0,
\quad\text{(initial conditions at $t=0$)}. \label{meq2d} \noeqref{meq2d}
\end{align}
\end{subequations}
Here $\eta_0(s)=u_0-u_-(-s)$ and $\Delta_\sigma\coloneq\nabla\cdot\sigma\nabla$; see also Figure~\ref{fig1}. There are also implicit Dirichlet boundary conditions for $x\in\p\Omega$, imposed by the requirement that we solve \r{meq2a} in the energy space $\mathcal H$ defined below.

\begin{figure}[htbp]
\includegraphics{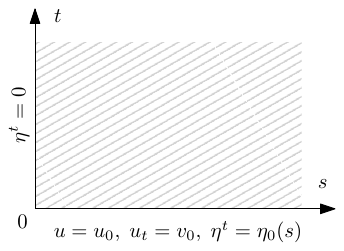}
\caption{We solve \r{meq2b}--\r{meq2d} in the product of the first quadrant with $L^2(\Omega)$. Smooth solutions require initial conditions satisfying compatibility conditions at the corner $s=t=0$.} 
\label{fig1}
\end{figure}

Taking $\eta^t$ as in \r{eta}, equation \r{1} implies \r{meq2a}--\r{meq2d}. 
Conversely, \r{meq2b} has a unique solution with boundary values on the axes of the first quadrant in the $(s,t)$-plane, and $\eta^t(s)$ in \r{eta} is clearly a solution provided that $u(t)$ is continuous everywhere, including across $t=0$, i.e., $u_-(0-)=u(0+)=u_0$. Assuming continuity away from $t=0$, the condition guaranteeing continuity there is $\eta^0(0)=0$, which is the zeroth-order compatibility condition. When this continuity fails, \r{meq2b} is still solvable, but its solution is discontinuous across the diagonal $t=s$.

We introduce $H_D(\Omega)$ as the completion of $C_0^\infty(\Omega)$ with respect to the Dirichlet norm
\[
\|u\|_{H_D(\Omega)}^2= \int_{\Omega}\sigma |\nabla u|^2\,\d x .
\]
By the Poincar\'e inequality, $H_D(\Omega)$ is topologically equivalent to $H_0^1(\Omega)$. We define the sesquilinear form
\begin{equation}\label{DefMInnProd}
(\eta_1,\eta_2)_{\mathcal M}
=
\int_0^\infty\int_\Omega  \mathfrak{m}(s,x)\,\nabla \eta_1(s,x)\cdot \overline{\nabla \eta_2(s,x)}\, \d x\,\d s.
\end{equation}
Since $\mathfrak{m}(s,x)$ may vanish in some regions, the induced quantity
$\|\eta\|_{\mathcal M}:=(\eta,\eta)_{\mathcal M}^{1/2}$ is in general only a seminorm.
Let
\[
\widetilde{\mathcal M}_0:= C_0^\infty((0,\infty)\times\Omega)
\]
be equipped with this seminorm, and let
\[
\mathcal N:=\{\eta\in \widetilde{\mathcal M}_0 : \|\eta\|_{\mathcal M}=0\}.
\]
We define $\mathcal M$ as the completion of the quotient
$\widetilde{\mathcal M}_0/\mathcal N$ with respect to the induced norm.
Then $\mathcal H = H_D(\Omega)\oplus L^2(\Omega)\oplus \mathcal M$ is the natural energy space for
$(u(t,\cdot),u_t(t,\cdot),\eta^t)$.

For $t\ge 0$, define the translation operator on representatives by
\begin{equation}\label{def-shift}
(\widetilde S(t)\eta)(s,x):=
\begin{cases}
0, & 0<s\le t,\\
\eta(s-t,x), & s>t.
\end{cases}
\end{equation}

\begin{lemma}[Right-shift semigroup]\label{lem:shift-semigroup}
The family $\{S(t)\}_{t\ge0}$ induced by \eqref{def-shift} is a strongly
continuous semigroup of contractions on $\mathcal M$. Let $A$ be its
infinitesimal generator. Then $A$ is closed and densely defined. Moreover,
\[
\mathcal C:=C_0^\infty((0,\infty)\times\Omega)/\mathcal N
\]
is a core for $A$, and
\[
A[\eta]=[-\partial_s\eta],
\qquad [\eta]\in\mathcal C .
\]
\end{lemma}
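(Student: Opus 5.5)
Our plan is to define $S(t)$ on the dense subspace $\widetilde{\mathcal M}_0/\mathcal N$, prove it is well defined and contractive there, and then extend it by continuity. Strong continuity and the core property will come from standard semigroup arguments.

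\textbf{Step 1: $\widetilde S(t)$ is a contraction on representatives.} For $\eta\in C_0^\infty((0,\infty)\times\Omega)$, the function $\widetilde S(t)\eta$ again lies in $C_0^\infty((0,\infty)\times\Omega)$, since $\eta$ vanishes near $s=0$. A change of variables gives
\[
\|\widetilde S(t)\eta\|_{\mathcal M}^2=\int_0^\infty\int_\Omega \mathfrak m(s+t,x)|\nabla\eta(s,x)|^2\,\d x\,\d s\le \|\eta\|_{\mathcal M}^2 .
\]
The inequality uses the monotonicity \eqref{m-monotone} and $\mathfrak m\ge0$, which give $\mathfrak m(s+t,x)\le \mathfrak m(s,x)$. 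Applying this bound to $\eta\in\mathcal N$ shows $\widetilde S(t)\mathcal N\subset\mathcal N$. Hence $S(t)[\eta]:=[\widetilde S(t)\eta]$ is well defined on the quotient, is a contraction there, and extends uniquely to a contraction on $\mathcal M$. The semigroup law $S(t+\tau)=S(t)S(\tau)$ and $S(0)=\Id$ hold on representatives and pass to the closure.

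\textbf{Step 2: strong continuity.} Because the operators are uniformly bounded by $1$, it suffices to check $S(t)[\eta]\to[\eta]$ on the dense set. For smooth compactly supported $\eta$ we have
\[
\|\widetilde S(t)\eta-\eta\|_{\mathcal M}^2=\int_0^\infty\int_\Omega\mathfrak m(s,x)|\nabla\eta(s-t,x)-\nabla\eta(s,x)|^2\,\d x\,\d s ,
\]
with $\eta(s-t,\cdot):=0$ for $s\le t$. For small $t$ the integrand is supported in a fixed compact set $[\delta,S_0]\times K$ with $\delta>0$. There it is bounded by $Ct^2\,\mathfrak m(s,x)$, by the mean value theorem applied to $\nabla\eta$. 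Since $\mathfrak m\in L^1((0,\infty);L^\infty_x)$ by \eqref{m-int}, and $\Omega$ is bounded, the integral is $O(t^2)\to0$. This is the only place where integrability of the singularity at $s=0$ is needed. It is in fact harmless, because the supports stay away from $s=0$.

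\textbf{Step 3: generator and core.} By the Hille--Yosida theory, $A$ is closed and densely defined. For $\eta\in C_0^\infty((0,\infty)\times\Omega)$ we have
\[
t^{-1}\bigl(\widetilde S(t)\eta-\eta\bigr)+\partial_s\eta\to0 \quad\text{in }\mathcal M .
\]
This follows from the same dominated estimate as in Step 2, using the Taylor remainder $O(t)$ for $\nabla\eta$ uniformly on compact support. Hence $\mathcal C\subset D(A)$ and $A[\eta]=[-\partial_s\eta]$. For the core property we plan to use the standard criterion (Engel--Nagel, Prop.~II.1.7): a dense subspace of $D(A)$ that is invariant under the semigroup is a core. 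Here $\mathcal C$ is dense in $\mathcal M$ by construction, and it is invariant under every $S(t)$ by Step 1, since $\widetilde S(t)$ maps $C_0^\infty((0,\infty)\times\Omega)$ into itself. This gives the core property.

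The main obstacle is the bookkeeping with the quotient by $\mathcal N$ and the possible degeneracy of $\mathfrak m$. It is resolved by the monotonicity inequality in Step 1, which shows that the null space $\mathcal N$ is preserved by the shift.
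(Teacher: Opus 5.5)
Your proposal is correct and follows the paper's proof essentially step by step. In both, monotonicity and a change of variables give contractivity and show that the shift preserves $\mathcal N$; strong continuity is checked on the dense smooth quotient and extended by the uniform bound; the generator is computed on $C_0^\infty((0,\infty)\times\Omega)$; and the core property comes from the invariant-dense-subspace criterion (the paper cites Reed--Simon, Theorem~X.49, where you cite Engel--Nagel). The only cosmetic difference is in strong continuity: you use a mean-value $O(t^2)$ bound together with integrability of $\mathfrak m$ away from $s=0$, while the paper uses local boundedness of $\mathfrak m$ on the support and $L^2$-continuity of translations.
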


\begin{proof}
We divide the proof into several steps.

\emph{Step 1: $S(t)$ is well defined on the quotient and is contractive.}
For $\eta\in \widetilde{\mathcal M}_0$, using \eqref{def-shift} and the change of variables
$r=s-t$, we get
\begin{align*}
\|\widetilde S(t)\eta\|_{\mathcal M}^2
&=
\int_0^\infty\int_\Omega \mathfrak{m}(s,x)\,|\nabla(\widetilde S(t)\eta)(s,x)|^2\,\d x\,\d s\\
&=
\int_t^\infty\int_\Omega \mathfrak{m}(s,x)\,|\nabla\eta(s-t,x)|^2\,\d x\,\d s\\
&=
\int_0^\infty\int_\Omega \mathfrak{m}(r+t,x)\,|\nabla\eta(r,x)|^2\,\d x\,\d r\\
&\le
\int_0^\infty\int_\Omega \mathfrak{m}(r,x)\,|\nabla\eta(r,x)|^2\,\d x\,\d r
=
\|\eta\|_{\mathcal M}^2,
\end{align*}
where we used the monotonicity of $\mathfrak{m}$ with respect to $s$. Hence $\widetilde S(t)$ preserves the null space
$\mathcal N$, so it descends to a well-defined contraction on
$\widetilde{\mathcal M}_0/\mathcal N$, still denoted by $S(t)$:
\[
S(t)[\eta]:=[\widetilde S(t)\eta].
\]
Since $\widetilde{\mathcal M}_0/\mathcal N$ is dense in $\mathcal M$, $S(t)$ extends continuously  to   $S(t) :\mathcal M\rightarrow\mathcal M$.

\emph{Step 2: semigroup property.}
For representatives,
\[
\widetilde S(t)\widetilde S(\tau)\eta=\widetilde S(t+\tau)\eta,
\qquad t,\tau\ge0,
\]
which is immediate from \eqref{def-shift}. Passing to equivalence classes gives
\[
S(t)S(\tau)=S(t+\tau),\qquad S(0)=I.
\]

\emph{Step 3: strong continuity on a dense subspace.}
Let $\eta\in \widetilde{\mathcal M}_0$. Since $\eta$ has compact support in
$(0,\infty)\times\Omega$, there exist $\delta>0$, $M>\delta$, and a compact set
$K\Subset \Omega$ such that
\[
\supp \eta\subset [\delta,M]\times K.
\]
For $0<t<1$, both $\eta$ and $\widetilde S(t)\eta$ are supported in
$[\delta,M+t]\times K$. Since $m$ is locally bounded, there exists $C>0$ such that
\[
\mathfrak{m}(s,x)\le C,\qquad (s,x)\in [\delta,M+1]\times K.
\]
Hence
\[
\|\widetilde S(t)\eta-\eta\|_{\mathcal M}^2
\le
C\int_0^\infty\int_\Omega
|\nabla(\widetilde S(t)\eta-\eta)(s,x)|^2\,\d x\,\d s.
\]
Now $\eta$ vanishes near $s=0$, so for small $t$ the operator $\widetilde S(t)$ is just translation
in the $s$-variable on the support of $\eta$. Therefore
\[
\nabla(\widetilde S(t)\eta)(s,x)\to \nabla \eta(s,x)
\qquad\text{pointwise as }t\downarrow0,
\]
and the convergence holds in $L^2$ on the compact set $[\delta/2,M+1]\times K$ by the standard continuity of translations. Thus
\[
\|\widetilde S(t)\eta-\eta\|_{\mathcal M}\to 0
\qquad\text{as }t\downarrow0.
\]
Equivalently,
\[
\|S(t)[\eta]-[\eta]\|_{\mathcal M}\to 0
\qquad\text{as }t\downarrow0
\]
for every $[\eta]\in \widetilde{\mathcal M}_0/\mathcal N$.

\emph{Step 4: strong continuity on $\mathcal M$.}
Since $\widetilde{\mathcal M}_0/\mathcal N$ is dense in $\mathcal M$ and each $S(t)$ is a contraction,
for any $\zeta\in\mathcal M$ and any $\zeta_n\in \widetilde{\mathcal M}_0/\mathcal N$ with
$\zeta_n\to\zeta$ in $\mathcal M$, we have
\[
\|S(t)\zeta-\zeta\|_{\mathcal M}
\le
\|S(t)(\zeta-\zeta_n)\|_{\mathcal M}
+\|S(t)\zeta_n-\zeta_n\|_{\mathcal M}
+\|\zeta_n-\zeta\|_{\mathcal M}
\le
2\|\zeta-\zeta_n\|_{\mathcal M}
+\|S(t)\zeta_n-\zeta_n\|_{\mathcal M}.
\]
First choose $n$ large and then let $t\downarrow0$. This proves
\[
\lim_{t\downarrow0}\|S(t)\zeta-\zeta\|_{\mathcal M}=0,
\qquad \zeta\in\mathcal M.
\]
Hence $\{S(t)\}_{t\ge0}$ is a strongly continuous semigroup of contractions on $\mathcal M$.

\emph{Step 5: action of the generator on smooth elements.}
Let $A$ be the generator of $\{S(t)\}_{t\ge0}$. Fix $\eta\in \widetilde{\mathcal M}_0$. Since
$\eta$ vanishes near $s=0$, for all sufficiently small $t>0$ we have
\[
\frac{\widetilde S(t)\eta-\eta}{t}(s,x)
=
\frac{\eta(s-t,x)-\eta(s,x)}{t} \to -\partial_s\eta(s,x)
\qquad\text{as }t\downarrow0.
\]
The same localization as in Step 3, together with differentiability of translations in $L^2$ on $C_0^\infty$ functions, shows that the convergence holds in $\mathcal M$. Therefore
\[
\lim_{t\downarrow0}\frac{S(t)[\eta]-[\eta]}{t}=[-\partial_s\eta]
\quad\text{in }\mathcal M.
\]
Hence $[\eta]\in D(A)$ and
\[
A[\eta]=[-\partial_s\eta].
\]

\emph{Step 6: the core property.}
It remains to show that
\[
\mathcal C:=\widetilde{\mathcal M}_0/\mathcal N
=
C_0^\infty((0,\infty)\times\Omega)/\mathcal N
\]
is a core for $A$. By construction of $\mathcal M$, the space
$\mathcal C$ is dense in $\mathcal M$. 
By Step 5,
\[
\mathcal C\subset D(A),
\qquad
A[\eta]=[-\partial_s\eta],
\qquad [\eta]\in\mathcal C .
\]
Therefore the standard invariant-core theorem for $C_0$-semigroups, see \cite[Theorem~X.49]{Reed-Simon2}, implies
that $\mathcal C$ is a core for $A$. This proves the final assertion of the
lemma.
\end{proof}

For $v\in C_0^\infty(\Omega)$, choose
$\chi_j\in C_0^\infty((0,\infty))$, $0\leq\chi_j\leq1$, such that
$\chi_j(s)\to1$ for every $s>0$. Then
\[
\eta_j(s,x):=[\chi_j(s)v(x)]
\]
is a Cauchy sequence in $\mathcal M$, and its limit is independent of
the particular choice of $\chi_j$. We denote this limit by $Jv$.
Moreover,
\[
\|Jv\|_{\mathcal M}^2
=
\int_\Omega \sigma_{\mathfrak m}(x)|\nabla v(x)|^2\,\d x
\leq C\|v\|_{H_D}^2.
\]
Hence $J$ extends uniquely to a bounded operator
\[
J:H_D(\Omega)\longrightarrow\mathcal M.
\]
Formally, $Jv$ is the $s$-independent history $Jv(s,x)=v(x)$.

For sufficiently regular solutions, the system
\eqref{meq2a}--\eqref{meq2d} can be written as the abstract evolution
equation
\be{L}
\frac{\d}{\d t} z = Lz, \quad z(0)=z_0,
\ee
where $z(t) = (u(t),v(t),\eta^t)^T$,   $z_0 = (u_0,v_0,\eta_0)^T$, and 
\be{LDef}
L (u,v,\eta)^T = \Big(v, \Delta_\sigma u+ \nabla \cdot \int_0^\infty \mathfrak{m}(s,\cdot)\nabla \eta(s)\,\d s, A\eta+ Jv\Big)^T
\ee
with the domain 
\be{domain_L}
D(L) = \left\{ z\in \mathcal H|\;  v\in H_D, \, \Delta_\sigma u+ \nabla \cdot \int_0^\infty \mathfrak{m}(s,\cdot)\nabla \eta(s)\,\d s\in L^2, \; \eta\in D(A) \right\}.
\ee
The boundary condition \eqref{meq2c} is encoded in the choice of $A$:
$A$ is the generator of the zero-filled right-shift semigroup. Thus
$D(A)$ incorporates the zero-inflow condition at $s=0$; on the
smooth core $\mathcal C$ this is simply the condition
$\eta(0)=0$.

The domain $D(L)$ is dense in $\mathcal H$. Indeed,
$C_0^\infty(\Omega)\oplus C_0^\infty(\Omega)\oplus\mathcal C\subset D(L)$,
and this subspace is dense in
\[
H_D(\Omega)\oplus L^2(\Omega)\oplus\mathcal M .
\]
\begin{remark} 
The operator
\[
Q\eta
=
\nabla\cdot\int_0^\infty \mathfrak m(s,x)\nabla\eta(s,x)\,ds
\]
is naturally interpreted as an element of $H_D(\Omega)'$. Namely, for $\eta\in C_0^\infty((0,\infty)\times\Omega)$ and
$\varphi\in H_D(\Omega)$, set
\[
\langle Q\eta,\varphi\rangle
:=
-\int_0^\infty\int_\Omega
\mathfrak m(s,x)\nabla\eta(s,x)\cdot\overline{\nabla\varphi(x)}
\,dx\,ds .
\]
This definition passes to the quotient defining $\mathcal M$. Indeed, if
$\|\eta\|_{\mathcal M}=0$, then
$\mathfrak m^{1/2}\nabla\eta=0$ a.e., and hence the above pairing vanishes for
every $\varphi\in H_D(\Omega)$.

Moreover, since $\sigma_{\mathfrak{m}}(x)\le C\sigma(x)$, 
\[
|\langle Q\eta,\varphi\rangle|
\le
C\|\eta\|_{\mathcal M}\|\varphi\|_{H_D}.
\]
Consequently,  $Q$ extends to the continuous map
\be{Q}
Q:\mathcal M\longrightarrow H_D(\Omega)'.
\ee
 In general, however, $Q\eta$ need not belong to $L^2(\Omega)$. 
We similarly define the operator $u\in H_D(\Omega)\mapsto\Delta_\sigma u\in H_D(\Omega)'$ by $\langle \Delta_\sigma u,\varphi\rangle=-\langle\sigma\nabla u,\nabla\varphi\rangle$ for all $\varphi\in H_D(\Omega)$.
With this the second component of \r{LDef} makes sense. 
\end{remark}

We will show that $L$ is dissipative.  
For $z\in D(L)$,
\[
(Lz,z)_{\mathcal H}=(v,u)_{H_D} +\langle \Delta_\sigma u+Q\eta, v\rangle
+(A\eta+Jv,\eta)_{\mathcal M}.
\]
Since $\Delta_\sigma u+Q\eta\in L^2(\Omega)$, the second term is initially an
$L^2$-pairing. We then split it in $H_D(\Omega)'$ -- $H_D(\Omega)$ duality.
The real parts of $(v,u)_{H_D}$ and $\langle\Delta_\sigma u,v\rangle$ 
cancel, and the real parts of $\langle Q\eta,v\rangle$ and 
$(Jv,\eta)_{\mathcal M}$
cancel as well. 
Hence  
\be{LDissip}
\Re(Lz,z)_{\mathcal H}
=
\Re(A\eta,\eta)_{\mathcal M}
\le0
\ee
because $A$ is the generator of a contraction semigroup. Thus $L$ is dissipative.

It remains to show that $\Id-L$ is surjective.

\subsubsection{Surjectivity of $\Id-L$}

Here we follow Section 3 of \cite{Giorgi_P_01}.
Letting $L$ be the operator \r{LDef}, we desire to show that $\Id-L$ is surjective.
Given $z^\ast= \left(u^\ast,v^\ast,\eta^\ast \right)\in \mathcal H$, we want to solve
\[ 
\left(\Id-L \right) z=z^\ast
\]
for $z\in D(L)$, 
which is explicitly given by the system:
\begin{equation}\label{I_Minus_L_Syst}
	\begin{cases}
		u - v = u^\ast,\\[4pt]
		v - \Delta_\sigma u - \nabla\cdot\displaystyle\int_{0}^{\infty}
		\mathfrak{m}(s,\cdot)\,\nabla \eta(s)\,ds = v^\ast,\\[6pt]
		\eta -A \eta - Jv = \eta^\ast.
	\end{cases}
\end{equation} 
 The third equation in \r{I_Minus_L_Syst} is equivalent to
\[
(\Id-A)\eta=\eta^\ast+Jv.
\]
Since $A$ is the generator of a contraction semigroup on $\mathcal M$, we
have that $1$ is in  the resolvent set of   $A$ by  the Hille-Yosida Theorem. 
Hence, once $v\in H_D(\Omega)$ is chosen, we define
\[
\eta=(\Id-A)^{-1}(\eta^\ast+Jv).
\]
 Then automatically
\[
\eta\in D(A),
\qquad
\eta-A\eta=\eta^\ast+Jv.
\]
Thus the third equation in \r{I_Minus_L_Syst} is satisfied.

Write $R=(\Id-A)^{-1}$. 
For any $g\in C^\infty(\mathbb{R}) : \sup_{[0,\infty)}{|g|}<\infty$, multiplication by $g$ defines a bounded operator $M_g:\mathcal M\to\mathcal M$. 
Indeed, on
$\widetilde{\mathcal M}_0/\mathcal N$ we set
\[
M_g[\xi]=[g\xi].
\]
This is well defined and satisfies
$\|M_g[\xi]\|_{\mathcal M}\leq
\|g\|_{L^\infty [0,\infty)}\|[\xi]\|_{\mathcal M}$; it therefore extends uniquely
to the completion $\mathcal M$. 

 Set $g=(1-e^{-s})$ and define the bounded operator $B=M_gJ:H_D(\Omega)\to\mathcal M$. For
$v\in C_0^\infty(\Omega)$, the element $Bv$ is the limit of the smooth
histories $[g(s)\chi_j(s)v(x)]$ used in the definition of $J$. A direct
calculation of the difference quotient for the shift semigroup gives
$Bv\in D(A)$, with $ABv$  equal to 
$-e^{-s} J v(x)$. The convergence of the difference quotient in $\mathcal M$
follows from \r{m-int} by dominated convergence. Consequently,
\[
(\Id-A)Bv=Jv,
\qquad v\in C_0^\infty(\Omega),
\]
and hence $Bv=RJv$. Since both $B$ and $RJ$ are bounded from
$H_D(\Omega)$ to $\mathcal M$, density gives
$B=RJ$ on $H_D(\Omega)$. Thus the definition of $\eta$ above can be
written as
\[
\eta=R\eta^\ast+Bv.
\]

The first equation in \r{I_Minus_L_Syst} gives $u=u^\ast+v$. Substitution
in the second equation shows that $v\in H_D(\Omega)$ has to satisfy
$v-\Delta_\sigma v-QBv=v^\ast+\Delta_\sigma u^\ast+QR\eta^\ast$ in
$H_D(\Omega)'$. Set
$\widetilde\sigma(x)=\int_0^\infty
\mathfrak m(s,x)(1-e^{-s})\,\d s$. Since $Jv$ is the
$s$-independent history associated with $v$ and $B=M_gJ$, the
definitions of $M_g$, $J$, and $Q$ give
$QBv=\Delta_{\widetilde\sigma}v$, first for   $v \in C_0 ^\infty(\Omega)$ and then for
every $v\in H_D(\Omega)$ by continuity. Hence $v$ has to solve
\[
-\Delta_{\sigma+\widetilde\sigma}v+v
=v^\ast+\Delta_\sigma u^\ast+QR\eta^\ast
\quad\text{in }H_D(\Omega)'.
\]
Assumption \r{m-int} implies that
$\widetilde\sigma\in L^\infty(\Omega)$, and
$\widetilde\sigma\geq0$. Thus $\sigma+\widetilde\sigma$ is bounded
and uniformly positive. The right-hand side belongs to $H_D(\Omega)'$
because $v^\ast\in L^2(\Omega)$, $u^\ast\in H_D(\Omega)$, and both
$R:\mathcal M\to\mathcal M$ and
$Q:\mathcal M\to H_D(\Omega)'$ are bounded. The Lax--Milgram lemma,
applied to the standard weak Dirichlet formulation for uniformly elliptic
divergence-form operators, therefore gives a unique $v\in H_D(\Omega)$.

With this $v$, set $u=u^\ast+v$ and
$\eta=R(\eta^\ast+Jv)$. Then $u\in H_D(\Omega)$,
$\eta\in D(A)$, and all three equations in \r{I_Minus_L_Syst} hold.
In particular, its second equation gives
$\Delta_\sigma u+Q\eta=v-v^\ast\in L^2(\Omega)$. Hence
$z=(u,v,\eta)\in D(L)$, which proves the surjectivity of $\Id-L$.


\subsubsection{Mild and weak solutions}
By the Lumer--Phillips theorem, see, e.g., \cite[Theorem~X.48]{Reed-Simon2},
we have proved the following.
\begin{theorem} \label{thm_weak_sol} 
Assume \r{m0}, \r{m-monotone}, \r{m-int}.  
	Then the operator $L$ generates a strongly continuous contraction semigroup $e^{tL}$, $t\ge0$, on $\mathcal H$. 
\end{theorem}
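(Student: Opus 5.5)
The plan is to apply the Lumer--Phillips theorem on the Hilbert space $\mathcal H=H_D(\Omega)\oplus L^2(\Omega)\oplus\mathcal M$, in the form \cite[Theorem~X.48]{Reed-Simon2}. That theorem says a densely defined, dissipative operator with $\Id-L$ surjective generates a strongly continuous contraction semigroup. I will verify its three hypotheses for $L$ in \r{LDef} with domain \r{domain_L}, and each is essentially prepared in the preceding discussion.

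\emph{Density.} The space $C_0^\infty(\Omega)\oplus C_0^\infty(\Omega)\oplus\mathcal C$ lies in $D(L)$. For such $z=(u,v,\eta)$ the only point to check is that $\Delta_\sigma u+Q\eta\in L^2$. Since $\eta$ is smooth and compactly supported in $s>0$, and $\mathfrak m$ is smooth for $s>0$, we can compute $Q\eta$ classically. The remaining requirements $v\in H_D$ and $\eta\in D(A)$ hold by Lemma~\ref{lem:shift-semigroup}. Each summand is dense in its factor: for $\mathcal M$ this holds by construction, and for $H_D$ and $L^2$ it is standard.

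\emph{Dissipativity.} For $z\in D(L)$ I expand $(Lz,z)_{\mathcal H}$ as above and use the $H_D'$--$H_D$ duality to split the middle term. The term $(v,u)_{H_D}+\langle\Delta_\sigma u,v\rangle$ is purely imaginary. The same holds for $\langle Q\eta,v\rangle+(Jv,\eta)_{\mathcal M}$, since by the definitions of $Q$ and $J$ we have $\langle Q\eta,v\rangle=-\overline{(Jv,\eta)_{\mathcal M}}$. I would check this on $\mathcal C$ and $C_0^\infty$ first and then pass to limits using boundedness of $Q$ and $J$. This leaves $\Re(Lz,z)=\Re(A\eta,\eta)_{\mathcal M}\le0$, which follows from the contraction property of $S(t)$ established in Lemma~\ref{lem:shift-semigroup}, and that lemma uses the monotonicity \r{m-monotone}.

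\emph{Range condition.} Surjectivity of $\Id-L$ is the main obstacle; I follow \cite{Giorgi_P_01}. Given $z^\ast$, the third equation is solved by $\eta=R(\eta^\ast+Jv)$ with $R=(\Id-A)^{-1}$. The key identity is $RJ=M_gJ$ for $g=1-e^{-s}$. Its proof requires showing that $M_gJv\in D(A)$ with $AM_gJv=-e^{-s}Jv$; this is done by dominated convergence, which is where \r{m-int} enters. With this identity, the remaining equation for $v$ becomes the elliptic problem $-\Delta_{\sigma+\widetilde\sigma}v+v=F\in H_D'$ with $0\le\widetilde\sigma\in L^\infty$, and Lax--Milgram gives a unique $v\in H_D$. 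Then $u=u^\ast+v$, and $\Delta_\sigma u+Q\eta=v-v^\ast\in L^2$ shows $z\in D(L)$. Lumer--Phillips then yields the theorem.
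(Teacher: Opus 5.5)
Your proposal is correct and follows the paper's proof essentially step for step. Like the paper, you apply Lumer--Phillips and establish three things: density via $C_0^\infty\oplus C_0^\infty\oplus\mathcal C\subset D(L)$; dissipativity via cancellation of real parts, leaving $\Re(Lz,z)_{\mathcal H}=\Re(A\eta,\eta)_{\mathcal M}\le 0$; and surjectivity of $\Id-L$ via the Giorgi--Pata construction with $RJ=M_gJ$, $g=1-e^{-s}$, which reduces to a Lax--Milgram problem for $-\Delta_{\sigma+\widetilde\sigma}+1$. Your explicit identity $\langle Q\eta,v\rangle=-\overline{(Jv,\eta)_{\mathcal M}}$ and the classical computation of $Q\eta$ on $\mathcal C$ simply spell out steps the paper states briefly.
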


In particular, 
the theorem shows that the abstract energy 
\be{AbstrEner}
E(t):=\|v(t)\|_{L^2}^2+\|u(t)\|_{H_D}^2+\|\eta^t\|_{\mathcal M}^2
\ee
is nonincreasing: $E(t)\leq E(s)$ for $0\leq s\leq t$. 
After the history variable is identified with the prescribed full history in the
proof below, this energy  will have the concrete expression
\[
E(t) = \|u_t(t)\|_{L^2}^2 + \|\sqrt{\sigma} \nabla u(t)\|_{L^2}^2 + \int_0^\infty\int_\Omega
\mathfrak m(s,x)
|\nabla(u(t,x)-u(t-s,x))|^2\,dx\,ds .
\]
%
%
%

Consider now the nonhomogeneous problem: 
\be{dz}
\frac{\d}{\d t} z= Lz+ g(t),\quad z(0)=z_0,
\ee 
where $z_0\in\mathcal H$, $g\in L^1((0,T);\mathcal{H})$ for every $T>0$. For fixed $t>0$, $e^{(t-s)L}g(s)$ is integrable; hence we consider the function given by Duhamel's formula:
\be{zt}
z(t) = e^{tL}z_0+ \int_0^t e^{(t-s)L} g(s)\, \d s
\ee
This is called the \textit{mild solution} of \r{dz}. A function $z(t)$ is called a \textit{classical solution} if
$z\in C([0,T];D(L))\cap C^1((0,T];\mathcal H)$ and it satisfies
\r{dz} for every $t\in[0,T]$.
A mild solution is not necessarily a
classical solution.

Adding a source term $f(t,x)$ to the right-hand side of \r{1} results in $g(t) = (0,f(t),0)$. This yields the following.

Put $\rho(s)=\|\mathfrak m(s,\cdot)/\sigma\|_{L^\infty}$. Since
\[
\|[F]\|_{\mathcal M}^2
\leq \int_0^\infty \rho(s)\|F(s)\|_{H_D}^2\,\d s,
\]
the class map on smooth histories extends continuously to
\[
\iota:L^2((0,\infty),\rho(s)\,\d s;H_D)\longrightarrow\mathcal M.
\]

\begin{theorem}\label{thm_wp1} 
Assume \r{m0}, \r{m-monotone}, \r{m-int}.  
Let $f\in C^0([0,T];L^2)$ for some $T>0$, let
$(u_0,v_0)\in H_D\oplus L^2$, and let $u_-$ be an
$H_D(\Omega)$-valued measurable prehistory on $(-\infty,0)$ such that
$F_0(s):=u_0-u_-(-s)$ belongs to
$L^2((0,\infty),\rho(s)\,\d s;H_D)$. Set $\eta_0:=\iota(F_0)$,
$z_0=(u_0,v_0,\eta_0)$ and $g(t)=(0,f(t),0)$. Then \r{dz} has a
unique mild solution $z\in C([0,T];\mathcal H)$, given by \r{zt}.
\end{theorem}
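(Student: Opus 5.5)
The statement is essentially an application of standard semigroup theory once two facts are in place: the initial datum $z_0$ is a legitimate element of $\mathcal H$, and $g$ is an admissible forcing. So the plan is to verify these two memberships and then invoke the generation result already proved.

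\emph{Step 1: the initial datum.} I would first check that $\eta_0=\iota(F_0)$ is a well-defined element of $\mathcal M$. This relies on the displayed bound $\|[F]\|_{\mathcal M}^2\le\int_0^\infty\rho(s)\|F(s)\|_{H_D}^2\,\d s$.
\begin{itemize}
\item The bound holds on smooth histories because $\mathfrak m(s,x)\le\rho(s)\sigma(x)$ pointwise.
\item Smooth compactly supported $H_D$-valued functions on $(0,\infty)$, with $C_0^\infty(\Omega)$ values, are dense in $L^2((0,\infty),\rho\,\d s;H_D)$. Hence $\iota$ extends uniquely and continuously to the whole weighted space.
\item Measurability of $u_-$ together with the hypothesis $F_0\in L^2(\rho\,\d s;H_D)$ then places $\eta_0\in\mathcal M$. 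Combined with $(u_0,v_0)\in H_D\oplus L^2$, this gives $z_0\in\mathcal H$.
\end{itemize}
One point needs care here: the weight $\rho$ may fail to be locally integrable near $s=0$ in a way that matters for density. However, $\rho\in L^1$ by \r{m-int}, since $\sigma$ is bounded below, so the usual truncation-and-mollification argument in the weighted Bochner space applies.

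\emph{Step 2: the forcing.} Next I would verify that $g(t)=(0,f(t),0)$ belongs to $C([0,T];\mathcal H)$, and hence to $L^1((0,T);\mathcal H)$. This is immediate, because the $L^2$ component of $\mathcal H$ carries exactly the norm of $L^2(\Omega)$ and $f\in C^0([0,T];L^2)$.

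\emph{Step 3: existence, continuity, uniqueness.} By Theorem~\ref{thm_weak_sol}, $L$ generates a $C_0$ contraction semigroup $e^{tL}$. So \r{zt} defines $z(t)$ as a Bochner integral, since $s\mapsto e^{(t-s)L}g(s)$ is strongly measurable and bounded by $\|g(s)\|_{\mathcal H}$.
\begin{itemize}
\item Continuity of $t\mapsto e^{tL}z_0$ follows from strong continuity of the semigroup.
\item For the convolution term, split $\int_0^{t+\delta}-\int_0^{t}$ into $\int_t^{t+\delta}e^{(t+\delta-s)L}g(s)\,\d s$, which is $O(\delta\sup\|g\|)$, plus $\int_0^t (e^{\delta L}-\Id)e^{(t-s)L}g(s)\,\d s$. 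The latter tends to zero by dominated convergence, using contractivity and strong continuity.
\item Uniqueness is by definition: the mild solution is given by the formula \r{zt}. If uniqueness is meant among weak solutions tested against $D(L^*)$, I would instead appeal to the standard equivalence of weak and mild solutions for $C_0$ semigroups (Ball's theorem).
\end{itemize}

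\emph{Main obstacle.} The only non-routine point I anticipate is the identification in Step 1: that the quotient class of $F_0$ does not depend on the chosen approximating sequence, so that $\iota$ is well defined on the quotient by $\mathcal N$. This follows because the bound is a seminorm estimate, and the null space is respected automatically.
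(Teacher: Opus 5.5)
Your proposal is correct and is essentially the paper's argument, which the paper leaves implicit. The weighted bound $\|[F]\|_{\mathcal M}^2\le\int_0^\infty\rho(s)\|F(s)\|_{H_D}^2\,\d s$ puts $\eta_0=\iota(F_0)$, and hence $z_0$, in the energy space, $g\in C([0,T];\mathcal H)$, and the contraction semigroup of Theorem~\ref{thm_weak_sol} then makes \eqref{zt} a well-defined element of $C([0,T];\mathcal H)$. The extra details you supply (density of smooth histories in the weighted Bochner space, continuity of the Duhamel term) are sound; your worry about $\rho$ near $s=0$ is moot, since density only needs $\rho\,\d s$ to be locally finite on $(0,\infty)$, and in any case $\rho\in L^1$ by \eqref{m-int}.
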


We connect the notion of a mild solution to \r{dz} to that of a weak solution of 
\be{meq3}
	\begin{dcases}
		\partial_t^2 u	-\Delta_\sigma u -  \nabla \cdot \int_0^\infty \mathfrak{m}(s,\cdot)\big( \nabla u(t) - \nabla u(t-s)\big)\,\d s=f(t,\cdot),\quad &t>0,\\
		u(0)=u_0, \quad u_t(0)=v_0, \quad u(t)= u_-(t)\  \text{for $t<0$}.
	\end{dcases} 
	\ee

\begin{theorem}\label{thm_wp2} 
Under the assumptions of Theorem~\ref{thm_wp1}, problem \r{meq3} 
has a unique weak solution 	$u\in C([0,T];H_D ) \cap C^1([0,T];L^2 ) \cap C^2([0,T];H_D')$,  given by the first component of the mild solution
$z\in C([0,T];\mathcal H)$ corresponding to $z_0=(u_0,v_0,\eta_0)$.
	

	Moreover, we have that the following energy estimate holds
	\begin{equation}\label{eq:energy-source-sharp}
		E(t)^{1/2} 		\le 
		E(0)^{1/2}  + \int_0^t   \|f(s)\|_{L^2}\,\d s.
	\end{equation}
\end{theorem}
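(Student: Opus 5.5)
Let $z(t)=(u(t),v(t),\eta^t)$ be the mild solution of Theorem~\ref{thm_wp1}. The plan is to prove the regularity of $u$ and the equation in $H_D'$ by approximation, identify $\eta^t$ with the relative history of $u$, and then derive \eqref{eq:energy-source-sharp} from the contraction property.

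\emph{Approximation.} We approximate $(z_0,f)$ by data $z_0^j\in D(L)$ and $f^j\in C^1([0,T];L^2)$. For such data the Duhamel formula \r{zt} gives classical solutions $z^j\in C([0,T];D(L))\cap C^1([0,T];\mathcal H)$; this is the standard result for $C^1$ inhomogeneities. For these we have:
\begin{itemize}
\item $\dot u^j=v^j$ in $H_D$;
\item $\dot v^j=\Delta_\sigma u^j+Q\eta^j+f^j$ in $L^2$;
\item $\dot\eta^j=A\eta^j+Jv^j$ in $\mathcal M$.
\end{itemize}
Since $e^{tL}$ is a contraction, \r{zt} gives $\sup_t\|z-z^j\|_{\mathcal H}\le\|z_0-z_0^j\|+\int_0^T\|f-f^j\|$, so $z^j\to z$ in $C([0,T];\mathcal H)$. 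In particular $u^j\to u$ in $C([0,T];H_D)$ and $v^j\to v$ in $C([0,T];L^2)$. Integrating the first equation in time and passing to the limit gives $u\in C^1([0,T];L^2)$ with $u_t=v$.

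\emph{The equation in $H_D'$.} By \r{Q}, the maps $\Delta_\sigma:H_D\to H_D'$ and $Q:\mathcal M\to H_D'$ are bounded. Hence the second equation passes to the limit in $C([0,T];H_D')$, giving $u\in C^2([0,T];H_D')$ with $u_{tt}=\Delta_\sigma u+Q\eta^t+f$.

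\emph{Identification of $\eta^t$ (main obstacle).} We must show $\eta^t=\iota(F_t)$, where $F_t(s)=u(t)-u(t-s)$ and $u(t-s)=u_-(t-s)$ for $s>t$. By variation of constants for the third component,
\[
\eta^t=S(t)\eta_0+\int_0^t S(t-\tau)Jv(\tau)\,d\tau .
\]
Here $S(t)\eta_0$ is the class of $s\mapsto F_0(s-t)$ for $s>t$ and $0$ otherwise. The term $S(t-\tau)Jv(\tau)$ is the class of $\mathbf 1_{s>t-\tau}\,v(\tau)$. Integrating in $\tau$ on smooth representatives gives
\[
\mathbf 1_{s<t}\bigl(u(t)-u(t-s)\bigr)+\mathbf 1_{s>t}\bigl(u(t)-u_0\bigr).
\]
Adding $F_0(s-t)=u_0-u_-(t-s)$ for $s>t$, the two pieces combine to $F_t(s)$. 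To make this rigorous, we verify the identity for the smooth approximations, using that $\iota$ and the Bochner integrals commute with taking classes, and then pass to the limit using continuity of $\iota$ and the $\rho$-weighted bound. Substituting into $Q\eta^t$ gives the memory term in \r{meq3}, so $u$ is a weak solution.

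\emph{Uniqueness.} Let $u$ be a weak solution in the stated class. Define $\eta^t:=\iota(F_t)$. Then $(u,u_t,\eta)$ satisfies the integrated (mild) form of \r{dz}, which one checks by testing against $D(L^*)$. Uniqueness of mild solutions then gives uniqueness of $u$.

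\emph{Energy estimate.} For $f=0$, $E(t)=\|z(t)\|_{\mathcal H}^2$ is nonincreasing by Theorem~\ref{thm_weak_sol}. In general, take norms in \r{zt} and use $\|e^{(t-s)L}g(s)\|\le\|g(s)\|=\|f(s)\|_{L^2}$, which gives
\[
E(t)^{1/2}\le E(0)^{1/2}+\int_0^t\|f\|_{L^2}.
\]
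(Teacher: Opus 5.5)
Your proposal is correct and follows essentially the same route as the paper. The paper also approximates by data in $D(L)$ with $C^1$ forcing and passes to the limit via contractivity to get the regularity and the equation in $H_D'$. It identifies the history through the variation-of-constants formula on the approximants; its operator $\Gamma_t$ with the bound $\|\Gamma_t u\|_{\mathcal M}\le 2C_{\mathfrak m}^{1/2}\|u\|_{C([0,T];H_D)}$ is exactly your $\rho$-weighted bound. Uniqueness comes from the abstract weak formulation tested against $D(L^*)$, and the energy estimate from Duhamel's formula plus contractivity.
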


 \begin{remark}
 In the theorem, a weak solution means that $u$,
$u_t$, and $u_{tt}$ have the stated time regularity, that its associated
full-history variable belongs to $C([0,T];\mathcal M)$, and that
\r{meq3} holds in $H_D'$ for every $t$, together with the initial and
prehistory conditions. Equivalently, one may use the classical
space--time formulation obtained by testing against smooth functions
$\phi(t,x)$ satisfying $\phi|_{\partial\Omega}=0$ and
$\phi(T,\cdot)=\partial_t\phi(T,\cdot)=0$; integration by parts then
incorporates the initial conditions.
\end{remark}

\begin{proof}[Proof of Theorem~\ref{thm_wp2}]
Let $z(t)$ be the mild solution given by \r{zt}, with
$g(t)=(0,f(t),0)$. By \r{zt} and contractivity of the semigroup,
\[
\|z(t)\|_{\mathcal H}
\leq \|z_0\|_{\mathcal H}
+\int_0^t\|f(s)\|_{L^2}\,\d s.
\]
Once the history variable is identified below, this is precisely
\eqref{eq:energy-source-sharp}.
It remains to identify the history component and show that the first
component of $z$ solves \r{meq3}. 
For a general mild solution, $v(t)$ need not belong to $H_D$, and
hence $Jv(t)$ need not be defined in $\mathcal M$. We therefore argue
by approximation. Choose
\[
z_{0,n}\in D(L),
\qquad
f_n\in C^1([0,T];L^2(\Omega)),
\]
such that
\[
z_{0,n}\to z_0\quad\text{in }\mathcal H,
\qquad
f_n\to f\quad\text{in }C([0,T];L^2(\Omega)).
\]
Let $z_n=(u_n,v_n,\eta_n)$ be the mild solution corresponding to
$z_{0,n}$ and $g_n=(0,f_n,0)$. Since $z_{0,n}\in D(L)$ and
$g_n\in C^1([0,T];\mathcal H)$, standard semigroup regularity implies
that $z_n$ is a classical solution. 
Duhamel's formula and contractivity
then give 
\[
z_n\longrightarrow z
\quad\text{in }C([0,T];\mathcal H).
\]
For each $n$ and $t$, we have $v_n(t)\in H_D$. Applying the variation-of-constants formula to the history equation gives
\[
\eta_n^t 
=
S(t)\eta_n^0 
+
\int_0^t S(t-r)J v_n(r) \,\d r.
\]
In particular, $Jw=\iota(s\mapsto w)$. For $u\in C([0,T];H_D)$, let
\[
G_{t,u}(s)
=
\begin{cases}
u(t)-u(t-s),&0<s<t,\\
u(t)-u(0),&s>t.
\end{cases}
\]
and define $\Gamma_tu:=\iota(G_{t,u})$. If
$u\in C^1([0,T];H_D)$, the definitions of $S(t)$ and $J$, followed by the
fundamental theorem of calculus, give
\[
\Gamma_tu=\int_0^tS(t-r)J\partial_ru(r)\,\d r.
\]
In particular, since $v_n=\partial_tu_n$, the identity above
can be written as $\eta_n^t=S(t)\eta_n^0+\Gamma_tu_n$. Moreover,
\[
\|\Gamma_tu\|_{\mathcal M}\le 2C_{\mathfrak m}^{1/2}
\|u\|_{C([0,T];H_D)}, \qquad
C_{\mathfrak m}=\int_0^\infty\left\| \mathfrak m(s,\cdot)/\sigma \right\|_{L^\infty}\,\d s,
\]
uniformly for $t\in[0,T]$.   Assumption \r{m-int}
implies that $C_{\mathfrak m}<\infty$, so $\Gamma_t$ is bounded on
$C([0,T];H_D)$, uniformly in $t$. Passing to the limit in $\mathcal M$, we
obtain
\be{eta_mild}
\eta^t=S(t)\eta^0+\Gamma_tu.
\ee
Recall that $F_0(s)=u_0-u_-(-s)$ and
$\eta^0=\eta_0=\iota(F_0)$. For $t\in[0,T]$, define
\[
F_t(s)=
\begin{cases}
u(t)-u(t-s),&0<s<t,\\
u(t)-u_-(t-s),&s>t.
\end{cases}
\]
The definitions of $\iota$, $S(t)$, and $\Gamma_t$, first for smooth
histories and then by continuity, show that identity \r{eta_mild} is
precisely
\[
\eta^t=\iota(F_t)\qquad\text{in }\mathcal M.
\]
Thus the displayed formula defines a concrete history whose image under
$\iota$ is $\eta^t$; it does not involve pointwise evaluation of an
arbitrary element of $\mathcal M$.

To interpret the memory flux, define on
$\widetilde{\mathcal M}_0/\mathcal N$
\[
P\xi
:=\int_0^\infty\mathfrak m(s,\cdot)\nabla\xi(s,\cdot)\,\d s.
\]
The Cauchy--Schwarz inequality gives
\[
\|P\xi\|_{L^2(\Omega;\mathbb R^n)}^2
\leq \|\sigma_{\mathfrak m}\|_{L^\infty}
\|[\xi]\|_{\mathcal M}^2.
\]
Hence $P$ is well defined on the quotient and extends continuously to
a map $P:\mathcal M\to L^2(\Omega;\mathbb R^n)$.
For every $F\in L^2((0,\infty),\rho(s)\,\d s;H_D)$, approximation by
smooth histories gives
\[
P\iota(F)=\int_0^\infty
\mathfrak m(s,\cdot)\nabla F(s,\cdot)\,\d s,
\]
where the integral is understood as the $L^2(\Omega;\mathbb R^n)$-limit
of the corresponding integrals for smooth approximants. Applying this
identity to $F_t$ gives the following equality in
$L^2(\Omega;\mathbb R^n)$:
\[
\begin{aligned}
P\eta^t
&=
\int_0^t
\mathfrak m(s,x)
\nabla\bigl(u(t,x)-u(t-s,x)\bigr)\,\d s
\\
&\quad+
\int_t^\infty
\mathfrak m(s,x)
\nabla\bigl(u(t,x)-u_-(t-s,x)\bigr)\,\d s.
\end{aligned}
\]
%
Since $Q=\nabla\cdot P:\mathcal M\to H_D'$, we may eliminate
$\eta^t$ from the second component of the abstract system and obtain
the original full-history equation in $\mathcal D'(0,T;H_D')$. 

The same approximation argument gives
\[
u_t=v
\quad\text{in }\mathcal D'(0,T;L^2(\Omega))
\]
and
\[
v_t=\Delta_\sigma u+ Q\eta+f
\quad\text{in }\mathcal D'(0,T;H_D').
\]
Since $v\in C([0,T];L^2(\Omega))$, the first identity implies
\[
u\in C^1([0,T];L^2(\Omega)).
\]
Moreover,
\[
\Delta_\sigma u+ Q\eta+f
\in C([0,T];H_D').
\]
Since
\[
v\in C([0,T];L^2(\Omega))
\subset C([0,T];H_D'),
\]
and its distributional derivative is continuous with values in $H_D'$,
it follows that $v\in C^1([0,T];H_D')$. Hence
\[
u_{tt}
=
\Delta_\sigma u+ Q\eta+f
\in C([0,T];H_D'),
\]
which proves the claimed regularity. The initial conditions in \r{meq3}
are satisfied by construction. Conversely, let $u$ be any weak solution
and let $\eta$ be its associated history variable. Then
$z=(u,u_t,\eta)\in C([0,T];\mathcal H)$ satisfies
\[
\frac{\d}{\d t}(z(t),\zeta)_{\mathcal H}
=(z(t),L^*\zeta)_{\mathcal H}
+(g(t),\zeta)_{\mathcal H}
\quad\text{in }\mathcal D'(0,T),
\qquad \zeta\in D(L^*),
\]
with $z(0)=z_0$. Thus $z$ is a weak solution of the abstract Cauchy
problem. By the standard uniqueness theorem for a $C_0$-semigroup, it
coincides with the mild solution \r{zt}. This proves uniqueness, and
\eqref{eq:energy-source-sharp} follows from the contractivity estimate
above.
\end{proof}

\begin{remark}\label{remark_support1}
In the wave-propagation part of the paper, we assume that the $x$-support of $\mathfrak m(s,\cdot)$ is contained in a fixed ball $B(0,R)$. In that case, the prehistory $u_-(t,x)$, $t<0$, needs to be prescribed only on $(-\infty,0)\times B(0,R)$, since for $s>t$ it enters the memory term only after multiplication by $\mathfrak m(s,x)$. Our choice of the initial conditions \r{ICa} makes the prehistory in $B(0,R)$ zero. 
\end{remark}

We have kept the memory kernel $\mathfrak m$ so far, instead of $\eps\mathfrak{m}$. We switch to $\eps\mathfrak{m}$ below. 

\subsection{Strong solutions, well-posedness}
\subsubsection{Compatibility conditions} 
We study solutions $u$ on $[0,T]\times\bar\Omega$ of
\be{full_eq}
u_{tt}
-\Delta_{\sigma_\eps} u
+\varepsilon\nabla\cdot \int_0^\infty \mathfrak m(s,x)
\nabla u(t-s,x)\,ds=f(t,x)
\ee
with $0\leq\eps\leq\eps_0$, where $\eps_0$ is fixed. We display the
dependence on $\eps$ because certain estimates below will be uniform in
$\eps$. The precise finite-order assumptions on the data, including the
forcing term $f$, are included in the definition below. For the moment, to
motivate the compatibility conditions, suppose that all quantities are
smooth enough for the following formal differentiations. Assume that a
prehistory is prescribed by
\[
u(t,x)=u_-(t,x),\qquad t<0 .
\]
The equation samples the trajectory on both sides of $t=0$. Indeed, if we
introduce the joined trajectory
\[
\widetilde u(t):=
\begin{cases}
u_-(t),&t<0,\\
u(t),&t\geq0,
\end{cases}
\]
then the memory term contains $\widetilde u(t-s)$. Splitting its integral at
$s=t$ and differentiating in $t$ produces endpoint terms involving the two
one-sided jets at $t=0$. These endpoint terms cancel precisely when the past
and forward jets agree. Thus compatibility of order $k$ requires the joined
trajectory to have matching time derivatives through order $k$ at $t=0$.

At order zero this gives
\[
u_-(0-)=u_0.
\]
For the concrete initial history
\[
\widehat\eta_0^{\,0}(s):=u_0-u_-(-s),\qquad s\geq0,
\]
this is equivalently $\widehat\eta_0^{\,0}(0)=0$. This statement concerns the
concrete history function; it does not assign a value at $s=0$ to an abstract
element of the memory space. At first order the matching condition is
\[
\partial_tu_-(0-)=v_0,
\]
or, equivalently,
$\partial_s\widehat\eta_0^{\,0}(0)=v_0$.

Beginning at second order, the forward jets are determined by the equation.
For example, evaluation of \r{full_eq} at $t=0+$ gives
\be{11}
U_2:=u_{tt}(0+)
=\Delta_{\sigma_\eps}u_0
-\varepsilon\nabla\cdot\int_0^\infty
\mathfrak m(s,\cdot)\nabla u_-(-s,\cdot)\,\mathrm ds
+f(0,\cdot).
\ee
Consequently, second-order corner compatibility is
$\partial_t^2u_-(0-)=U_2$; in terms of the concrete history above, it is
$\partial_s^2\widehat\eta_0^{\,0}(0)=-U_2$. Higher forward jets are obtained
in the same way by differentiating \r{full_eq} and evaluating at $t=0+$.
This produces the recursion below and explains
why the time jet of $f$ enters the higher-order conditions.

There is also a distinct boundary compatibility requirement. Since the
solution satisfies the homogeneous Dirichlet condition for $t\geq0$, every
forward jet that possesses a boundary trace must have zero trace. At order
$k$, this is imposed on the jets through $U_k$. The next jet $U_{k+1}$ has
only $L^2$ regularity in the scale used below, so neither a boundary trace nor
an additional matching condition is required of it. 
Based on these observations, we define the forward time jets by  
\[
U_0:=u_0 ,
\qquad
U_1:=v_0, 
\]
\begin{equation}\label{eq:forward-jets}
\begin{aligned}
U_{j+2}
:={}&
\Delta_{\sigma_\varepsilon}U_j
-
\varepsilon\nabla\cdot
\int_0^\infty
\mathfrak m(s,\cdot)\,
\nabla\partial_t^j u_-(-s,\cdot)\,\mathrm ds
+
\partial_t^j f(0,\cdot), \quad j\ge0. 
\end{aligned}
\end{equation}
Here the formula is understood whenever its right-hand side is defined. For
compatibility of order $k$, we use it for $0\leq j\leq k-1$. Under the
regularity assumptions in Definition~\ref{def:compatibility}, together with
\r{m-int-k}, its right-hand side belongs to $H^{k-1-j}(\Omega)$, and hence
$U_j\in H^{k+1-j}(\Omega)$ for $0\leq j\leq k+1$.

\begin{definition}[Compatibility conditions]\label{def:compatibility}
Fix $k\geq1$. Assume $ u_0\in H^{k+1}(\Omega)$, $v_0\in H^k(\Omega)$, 
and
\begin{equation}\label{eq:prehistory-regularity}
\partial_t^\ell u_-
\in
C_{\mathrm b}\bigl((-\infty,0];
H^{k+1-\ell}(\Omega)\bigr),
\qquad 0\leq\ell\leq k.
\end{equation}
Assume also that
\begin{equation}\label{eq:forcing-regularity}
\partial_t^\ell f
\in
C\bigl([0,T];H^{k-\ell}(\Omega)\bigr),
\qquad 0\leq\ell\leq k.
\end{equation}
In particular, $\partial_t^k f\in C([0,T];L^2(\Omega))$.

We say that $(u_0,v_0,u_-,f)$ satisfies the compatibility conditions of
order $k$ if the following two conditions hold.

(i) The forward jets agree with the corresponding prehistory jets:
\begin{equation}\label{eq:corner-compatibility}
\partial_t^j u_-(0-,\cdot)=U_j
\quad\text{in }H^{k+1-j}(\Omega),
\qquad 0\leq j\leq k.
\end{equation}

(ii) The forward jets satisfy the Dirichlet boundary condition:
\begin{equation}\label{eq:boundary-compatibility}
U_j\in H^{k+1-j}(\Omega)\cap H_0^1(\Omega),
\qquad 0\leq j\leq k.
\end{equation}
 
For $0\leq j\leq k$, define the concrete differentiated initial history
by
\begin{equation}\label{eq:concrete-differentiated-history}
\widehat\eta_j^{\,0}(s,\cdot)
:=
U_j-\partial_t^j u_-(-s,\cdot),
\qquad s\geq0.
\end{equation}
Then \eqref{eq:corner-compatibility} is equivalently
\[
\widehat\eta_j^{\,0}(0,\cdot)=0
\quad\text{in }H^{k+1-j}(\Omega),
\qquad 0\leq j\leq k.
\]
Here $\widehat\eta_j^{\,0}$ denotes the concrete history function. If
$\eta_j^0\in\mathcal M$ denotes the corresponding abstract history
element, no pointwise value $\eta_j^0(0)$ is being asserted.
%
\end{definition}

The recursion \r{eq:forward-jets} also defines $U_{k+1}\in L^2(\Omega)$.
This quantity will be the initial value of $\partial_t^{k+1}u$ and is needed
as the second component of the terminal abstract jet
$Z_k=(U_k,U_{k+1},\eta_k^0)\in\mathcal H$, introduced in the proof of
Theorem~\ref{thm_well_posed}. Although $U_{k+1}$ is determined by the
recursion, no matching or boundary condition is imposed on it.


\subsubsection{The elliptic Volterra equation. Relating $D(L^k)$ to Sobolev spaces}
For the homogeneous abstract problem, $z(0)\in D(L^k)$ implies that the
solution remains continuous with values in $D(L^k)$. In the nonhomogeneous
problem, however, smoothness of $f$ alone does not imply that
$(0,f,0)\in D(L^k)$. The appropriate conditions are instead the recursive
abstract compatibility conditions for the time jets, which will be verified
in the proof of Theorem~\ref{thm_well_posed}. What remains to be clarified is
how the resulting graph regularity relates to the classical Sobolev spaces.
The main difficulty is that $z\in D(L)$ implies $v\in H^1$, whereas the
second component of $Lz$, see \r{LDef}, gives only
$\Delta_\sigma u+Q\eta\in L^2$; the two terms cannot be separated directly
to conclude that $u\in H^2$.

The following lemma about the elliptic Volterra equation would be useful for this purpose.  

\begin{lemma}\label{lemma_V} 
For $k=0,1,\dots$, let $F\in C([0,T];H^k(\Omega))$, and
assume that \eqref{m-int-k} holds with $k+1$ in place of $k$. 
Then the Volterra equation 
\begin{equation}   \label{Volterra_eq}
\Delta_{\sigma_\eps} u (t) -  \eps\nabla \cdot \int_0^t \mathfrak{m}(s,\cdot) \nabla u(t-s)\,\d s=F(t)
\end{equation}
has a unique solution $u\in C([0,T]; \, H^{k+2}(\Omega)\cap H_0^1(\Omega))$. Moreover, the norm of the solution operator $F\mapsto u$ is uniformly bounded in $\eps\in [0,\eps_0]$, for $\eps_0\ll1$ possibly depending on $m$.  
\end{lemma}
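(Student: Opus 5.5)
The plan is to treat \r{Volterra_eq} as a perturbation of the elliptic operator $\Delta_{\sigma_\eps}$, which is invertible, and to solve the resulting Volterra equation of the second kind by a Picard iteration. The Volterra structure will be used to get convergence without any smallness assumption on $\mathfrak m$.

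\textbf{Step 1: invert the principal part.} Let $G_\eps=\Delta_{\sigma_\eps}^{-1}$ with Dirichlet conditions. Since $\sigma_\eps=1+\eps\sigma_{\mathfrak m}$ is smooth and $1\le\sigma_\eps\le 1+\eps_0\|\sigma_{\mathfrak m}\|_\infty$, elliptic regularity gives
\[
\|G_\eps F\|_{H^{k+2}}\le C\|F\|_{H^k}
\]
for $F\in H^k$, with $G_\eps F\in H^{k+2}\cap H^1_0$. The constant $C$ is uniform in $\eps\in[0,\eps_0]$, because the $C^{k+1}$ norms of $\sigma_\eps$ are bounded uniformly in $\eps$.

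\textbf{Step 2: rewrite as a second-kind equation.} The equation becomes
\[
u(t)=G_\eps F(t)+\eps\int_0^t K_\eps(s)u(t-s)\,\d s,
\qquad
K_\eps(s)w:=G_\eps\nabla\cdot(\mathfrak m(s,\cdot)\nabla w).
\]
For $w\in H^{k+2}$, the Leibniz rule gives
\[
\|\nabla\cdot(\mathfrak m(s)\nabla w)\|_{H^k}\le \rho_k(s)\|w\|_{H^{k+2}},
\qquad
\rho_k(s):=C\sum_{|\beta|\le k+1}\|\partial_x^\beta\mathfrak m(s,\cdot)\|_{L^\infty}.
\]
By \eqref{m-int-k} with $k+1$ in place of $k$, we have $\rho_k\in L^1(0,\infty)$. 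Hence $K_\eps(s)$ is bounded on $X:=H^{k+2}\cap H^1_0$ with norm at most $C\rho_k(s)$, uniformly in $\eps$.

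\textbf{Step 3: existence and uniqueness.} On $C([0,T];X)$, the operator $(\mathcal Vu)(t)=\eps\int_0^tK_\eps(s)u(t-s)\,\d s$ is bounded. Continuity in $t$ follows from continuity of translations together with the integrable kernel $\rho_k$, via dominated convergence. If $\eps_0 C\|\rho_k\|_{L^1}<1$, then $\mathcal V$ is a contraction and the Neumann series gives a unique solution, with solution-operator norm at most $C/(1-\eps_0C\|\rho_k\|_{L^1})$. This is exactly where the restriction $\eps_0\ll1$, possibly depending on $m$, enters. Even without smallness, a weighted norm $\sup_t e^{-\gamma t}\|u(t)\|_X$ makes $\mathcal V$ a contraction for $\gamma$ large, since $\int_0^\infty e^{-\gamma s}\rho_k(s)\,\d s\to0$ as $\gamma\to\infty$. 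That argument also works for any $\eps$ in a bounded set.

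\textbf{Main obstacle.} The delicate point is not the iteration but Step 2: $\nabla\cdot(\mathfrak m\nabla u)$ must land in $H^k$ given only $u\in H^{k+2}$. This costs $k+1$ spatial derivatives of $\mathfrak m$, which is why the hypothesis asks for \eqref{m-int-k} with $k+1$. The singularity $s^{p-1}$ only affects $s$-integrability, which is already covered by the $L^1$ assumption. Measurability and strong continuity of $s\mapsto K_\eps(s)$ in operator norm may fail, so I will verify strong measurability and use Bochner integrals for continuous $u$.
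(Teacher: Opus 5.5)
Your proposal is correct and follows the paper's argument. You invert the Dirichlet realization of $\Delta_{\sigma_\eps}$, reduce to a second-kind Volterra equation, and solve it by a Neumann series using the $\eps$-smallness of the kernel (or an exponential weight in time in general), with the $k+1$ spatial derivatives of $\mathfrak m$ entering exactly as you describe. The differences are cosmetic: the paper iterates in $v=\Delta_{\sigma_\eps}u\in H^k$ with kernel $B(s)\Delta_{\sigma_\eps,D}^{-1}$ rather than in $u$, and it obtains uniformity in $\eps$ from the factorization $\Delta_{\sigma_\eps,D}^{-1}=\Delta_D^{-1}(\Id+\eps\Delta_{\sigma_{\mathfrak m}}\Delta_D^{-1})^{-1}$, whereas you use uniform variable-coefficient elliptic estimates, which do not need $\eps$ small at that step.
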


\begin{proof}
We will formulate the problem in more general terms. Let 
$\mathcal{A}: \mathcal H_1 \to \mathcal H_2$ 
be an isomorphism, 
$B(t)\in L^1([0,T]_t; \, \mathcal{L}(\mathcal H_1, \mathcal H_2))$, 
and $F\in C([0,T]; \, \mathcal H_2)$. Then 
\[
\mathcal{A}u(t) + \int_0^t B(s)u(t-s)\, ds=F(t)
\]
has a unique solution $u\in  C([0,T]; \, \mathcal H_1)$ continuously depending on $F$. Set $v(t) = \mathcal{A} u(t)$. We are looking for a solution of
\be{eq_v}
v(t) + \int_0^t K(s) v(t-s)\, ds=F(t), \quad K(s) := B(s)\mathcal{A}^{-1} \in  L^1([0,T]_t; \, \mathcal{L}(\mathcal H_2, \mathcal H_2)).
\ee
This equation is solvable via Neumann series in $ C([0,T]; \, \mathcal H_2)$ when 
 $\|K\|_{ L^1([0,T]_t; \, \mathcal{L}(\mathcal H_2, \mathcal H_2))}<1$. 
The general case can be reduced to this one by setting $v_\alpha (t) = e^{-\alpha t}v(t)$ for some $\alpha\gg1$. Then $v_\alpha$ solves \r{eq_v} with $K$ replaced by $K_\alpha(s) =e^{-\alpha s}K(s)$, and $F_\alpha(t) =e^{-\alpha t} F(t)$. Then the convolution operator in \r{eq_v} has an operator norm bounded by 
\[
\int_0^T e^{-\alpha s} \|K(s)\|_{\mathcal{H}_2,\mathcal{H}_2}\,\d s,
\]
which converges to zero as $\alpha\to\infty$ by the dominated convergence theorem. Therefore, the operator norm can be made less than one for $\alpha>\alpha_0(\mathcal{A},B)>0$, and the Neumann series argument still applies. It is straightforward to check that the solution operator has the claimed continuity in the original norms. 

In the case of the lemma, $\mathcal H_1=H^{k+2}(\Omega)\cap H^1_0(\Omega)$, $\mathcal H_2=H^k(\Omega)$, $\mathcal{A}=\Delta_{\sigma_\eps,D}$, where the subscript $D$ denotes the Dirichlet realization, and $B(s)u=-\eps\nabla\cdot(\mathfrak{m}(s)\nabla u)$. To prove uniformity in $\eps$, write $\Delta_{\sigma_\eps,D}^{-1}=\Delta_D^{-1}(\Id+\eps\Delta_{\sigma_{\mathfrak{m}}}\Delta_D^{-1})^{-1}$. The inverse in the second factor exists on $H^k(\Omega)$ for $0\le\eps\ll1$. Note also that $K$ is $\eps$-small in this case, so no $\alpha$ is needed. 

Note that we need to assume \r{m-int-k} for $|\beta|\le k+1$ and for $t\in(0,T)$ only. 
\end{proof}
\subsubsection{Strong solutions}

In the next theorem, we need the stronger version \r{m-int-k}. 
For $0\leq j\leq k-1$, we use the notation
\be{past_source}
f_-^{(j)}(t):=\varepsilon\nabla\cdot\int_t^\infty
\mathfrak m(s,\cdot)\nabla\partial_t^j u_-(t-s,\cdot)\,\d s .
\ee
Thus $f_-^{(j)}$ denotes the contribution associated with the $j$th
prehistory jet.

\begin{theorem} \label{thm_well_posed}
Fix $k\in\{1,2,\dots\}$, $T>0$, and $0\leq\varepsilon\leq\varepsilon_0$.
Assume \r{m0}, \r{m-monotone}, and \r{m-int-k}, and suppose that
$(u_0,v_0,u_-,f)$ satisfies the regularity and compatibility conditions of
order $k$ in Definition~\ref{def:compatibility}, for this value of
$\varepsilon$. Then \r{full_eq}, with $u(0)=u_0$, $u_t(0)=v_0$, the
prescribed prehistory $u_-$, and the homogeneous Dirichlet boundary
condition, has a unique weak solution. This solution is strong and satisfies
\[
\partial_t^j u\in C\bigl([0,T];H^{k+1-j}(\Omega)\cap H_0^1(\Omega)\bigr),
\quad 0\leq j\leq k,\qquad
\partial_t^{k+1}u\in C\bigl([0,T];L^2(\Omega)\bigr).
\]
Moreover, $\partial_t^j u(0+)=U_j$ for $0\leq j\leq k+1$; hence the
forward and prehistory jets agree through order $k$. 
The solution depends continuously on the data in these spaces.
\end{theorem}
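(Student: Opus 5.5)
We plan to argue by induction on $k$, working with the time-differentiated problems and using the semigroup of Theorem~\ref{thm_weak_sol} together with the elliptic Volterra Lemma~\ref{lemma_V} to convert graph regularity into Sobolev regularity.

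\emph{Step 1 (reduction to abstract jets).} For $0\le j\le k$ we formally set $w_j=\partial_t^j u$. Differentiating \r{full_eq} $j$ times in $t$, after splitting the memory integral at $s=t$ and using the corner compatibility \eqref{eq:corner-compatibility}, we find that $w_j$ solves the same equation with source $\partial_t^j f$, initial data $(U_j,U_{j+1})$, and prehistory $\partial_t^j u_-$. The boundary terms produced by the split cancel exactly because the joined trajectory has matching jets through order $k$. Equivalently, $Z_j:=(U_j,U_{j+1},\eta_j^0)$ with $\eta_j^0=\iota(\widehat\eta_j^{\,0})$ lies in $\mathcal H$. Using \eqref{eq:boundary-compatibility}, the recursion \eqref{eq:forward-jets}, and the identity $A\eta_j^0+JU_{j+1}=\eta_{j+1}^0$ (which holds because $\partial_s\widehat\eta_j^{\,0}=\partial_t^{j+1}u_-(-s)$ and $\widehat\eta_j^{\,0}(0)=0$, so $\eta_j^0\in D(A)$), we verify that $Z_j\in D(L)$ and $LZ_j+(0,\partial_t^jf(0),0)=Z_{j+1}$ for $j\le k-1$. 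These are exactly the abstract compatibility conditions of the nonhomogeneous Cauchy problem \r{dz}.

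\emph{Step 2 (abstract regularity).} Let $z$ be the mild solution from Theorem~\ref{thm_wp1}. We apply the standard regularity theorem for $\dot z=Lz+g$ with $g=(0,f,0)$ and $\partial_t^\ell g\in C([0,T];\mathcal H)$. Differentiating Duhamel's formula, or arguing inductively by applying the $k=1$ result to the differentiated problem whose data is $Z_{j}$, gives $z\in C^{j}([0,T];\mathcal H)$ with $\partial_t^j z$ the mild solution with initial datum $Z_j$ and source $\partial_t^jg$. For $j\le k-1$ we also get $\partial_t^j z\in C([0,T];D(L))$. Hence $\partial_t^{j}u\in C([0,T];H_0^1)$ for $j\le k$, $\partial_t^{k+1}u\in C([0,T];L^2)$, and $\partial_t^j u(0+)=U_j$. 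Uniqueness of the weak solution is already given by Theorem~\ref{thm_wp2}.

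\emph{Step 3 (spatial regularity via Lemma~\ref{lemma_V}), the main obstacle.} Membership in $D(L)$ only gives $\Delta_\sigma u+Q\eta\in L^2$, and the two terms cannot be separated. We rewrite the equation for $w_j$ as the elliptic Volterra equation \r{Volterra_eq}:
\[
\Delta_{\sigma_\eps}w_j(t)-\eps\nabla\cdot\int_0^t\mathfrak m(s)\nabla w_j(t-s)\,\d s=\partial_t^{j+2}u-\partial_t^jf+f_-^{(j)}(t).
\]
By \eqref{eq:prehistory-regularity} and \r{m-int-k}, the past source $f_-^{(j)}$ from \r{past_source} lies in $C([0,T];H^{k-1-j})$. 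We then use descending induction in $j$. For $j=k-1$ the right-hand side lies in $C([0,T];L^2)$, so Lemma~\ref{lemma_V} gives $w_{k-1}\in C([0,T];H^2\cap H^1_0)$. Next, for $j=k-2$ the right-hand side now contains $w_k\in H^1$, so only $H^0$ is available at first. To gain one more derivative we apply the step to $w_k=\partial_t w_{k-1}$ at the lower level: $w_{k-1}\in H^2$ gives $w_k$ the regularity needed. In general, knowing $w_{j+2}\in C([0,T];H^{k-1-j})$ yields $w_j\in C([0,T];H^{k+1-j})$ by Lemma~\ref{lemma_V} with index $k-1-j$. The delicate point is ensuring that this bookkeeping closes, i.e., that each application of the lemma has \r{m-int-k} available with one extra derivative and that the prehistory integral over $s>t$ is differentiated correctly. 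Continuous dependence then follows by combining the contraction estimate \eqref{eq:energy-source-sharp} for each $\partial_t^j z$ with the bounded solution operators of Lemma~\ref{lemma_V}, all uniform in $\eps\in[0,\eps_0]$.
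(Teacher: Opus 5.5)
Your proposal follows the paper's proof essentially step for step: the same abstract jets $Z_j$ with $L_\varepsilon Z_j+g^{(j)}(0)=Z_{j+1}$, the same semigroup regularity $z\in C^{k-1}([0,T];D(L_\varepsilon))\cap C^k([0,T];\mathcal H)$, and the same downward Volterra induction through Lemma~\ref{lemma_V}. Two things to fix in Step 3. First, the aside at $j=k-2$ miscounts: the needed index there is $k-1-j=1$, and Step 2 already gives $w_k\in C([0,T];H_0^1)$, just as it gives $w_{k+1}\in C([0,T];L^2)$ for $j=k-1$. Its proposed remedy is also invalid, since $H^2$ regularity of $w_{k-1}$ says nothing about $\partial_t w_{k-1}$. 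With those two base cases from Step 2 the induction closes in steps of two. Second, as the paper notes, you should add that the solution produced by Lemma~\ref{lemma_V} coincides with $w_j$, by uniqueness of the same Volterra problem in the energy class ($\mathcal H_1=H_0^1$, $\mathcal H_2=H^{-1}$).
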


\begin{proof}
The proof consists of two steps. First, we translate the Sobolev
compatibility conditions into the corresponding abstract ones. With
$Z_j=(U_j,U_{j+1},\eta_j^0)$ and $g=(0,f,0)$, they take the form
$Z_j\in D(L_\varepsilon)$ and
$Z_{j+1}=L_\varepsilon Z_j+g^{(j)}(0)$ for $0\leq j\leq k-1$. When $f=0$,
this is precisely the condition $Z_0\in D(L_\varepsilon^k)$; for general
$f$, it is its source-adjusted version. Semigroup regularity then gives the
required time regularity in the energy and graph spaces. Second, after
differentiating the equation and splitting the memory integral into its
present and prehistory parts, we apply Lemma~\ref{lemma_V}, starting with
$\partial_t^{k-1}u$ and proceeding downward, to obtain the additional
spatial regularity.

We give the proof for $\varepsilon>0$; when $\varepsilon=0$, the memory
component is omitted and the same argument reduces to the standard wave
equation. Let $L_\varepsilon$ be the operator $L$ with $\mathfrak m$ replaced
by $\varepsilon\mathfrak m$, and set $g(t)=(0,f(t),0)$. For
$0\leq j\leq k$, let $\eta_j^0\in\mathcal M$ be the abstract history element
associated with the concrete function $\widehat\eta_j^{\,0}$ in
\r{eq:concrete-differentiated-history}, and set
$Z_j=(U_j,U_{j+1},\eta_j^0)$, $0\leq j\leq k$.

We first verify the abstract compatibility conditions. For $j\leq k-1$,
the concrete function $\widehat\eta_j^{\,0}$ vanishes at $s=0$, while it and
its $s$-derivative are bounded and continuous with values in the appropriate
spatial $H^1$ space. The difference-quotient definition of the generator of
the zero-filled right shift therefore gives $\eta_j^0\in D(A)$ and
$A\eta_j^0=-\partial_s\widehat\eta_j^{\,0}$ in $\mathcal M$. This assertion
follows by dominated convergence on $s>h$; on $0<s\leq h$, the zero filling
and $\widehat\eta_j^{\,0}(0)=0$ give a bound tending to zero because
$\int_0^h\|\mathfrak m(s,\cdot)\|_{L^\infty}\,\d s\to0$. Thus the value at
$s=0$ is used only for the concrete history function. The boundary
compatibility gives $U_{j+1}\in H_0^1(\Omega)$. Finally, the definition of
the forward jets gives
\[
A\eta_j^0+JU_{j+1}=\eta_{j+1}^0,
\qquad
\Delta_\sigma U_j+\varepsilon Q\eta_j^0=U_{j+2}-\partial_t^j f(0),
\qquad 0\leq j\leq k-1.
\]
Consequently, $Z_j\in D(L_\varepsilon)$ for $0\leq j\leq k-1$,
$Z_k\in\mathcal H$, and
$L_\varepsilon Z_j+g^{(j)}(0)=Z_{j+1}$. Notice that this is exactly why no
additional matching or boundary condition is imposed on $U_{k+1}$: it is the
second component of $Z_k$ and is required only to belong to $L^2(\Omega)$.

The forcing regularity in Definition~\ref{def:compatibility} gives
$g\in C^k([0,T];\mathcal H)$. Hence the standard regularity result for the
inhomogeneous abstract Cauchy problem,
applied successively to $z,z_t,\dots,z^{(k-1)}$, now yields
\[
z\in C^{k-1}\bigl([0,T];D(L_\varepsilon)\bigr)
\cap C^k\bigl([0,T];\mathcal H\bigr),
\qquad z^{(j)}(0)=Z_j,\quad 0\leq j\leq k.
\]
In particular, writing $w_j=\partial_t^j u$, we have
$w_j\in C([0,T];H_0^1(\Omega))$ for $0\leq j\leq k$ and
$w_{k+1}\in C([0,T];L^2(\Omega))$.

It remains to recover the additional spatial regularity. For
$0\leq j\leq k-1$, differentiate \r{full_eq} $j$ times and split the memory
integral at $s=t$. The endpoint terms cancel because the past and forward
jets match at $t=0$, and we obtain
\[
\Delta_{\sigma_\varepsilon}w_j
-\varepsilon\nabla\cdot\int_0^t\mathfrak m(s,\cdot)
\nabla w_j(t-s,\cdot)\,\d s
=w_{j+2}+f_-^{(j)}-\partial_t^j f.
\]
The assumptions on $u_-$ and \r{m-int-k} imply
$f_-^{(j)}\in C([0,T];H^{k-1-j}(\Omega))$; this follows directly from the
usual Sobolev product estimate and dominated convergence in $s$. For
$j=k-1$, the right-hand side of the last equation is continuous with values
in $L^2(\Omega)$, so Lemma~\ref{lemma_V} gives
$w_{k-1}\in C([0,T];H^2(\Omega)\cap H_0^1(\Omega))$. Here the solution
provided by the lemma agrees with the energy solution obtained above: the
same Volterra argument, applied with
$\mathcal H_1=H_0^1(\Omega)$ and $\mathcal H_2=H^{-1}(\Omega)$, gives
uniqueness in the energy class. We now proceed downward in $j$. At each
step, $w_{j+2}$ is already continuous with values in
$H^{k-1-j}(\Omega)$; applying Lemma~\ref{lemma_V} with Sobolev index
$k-1-j$ gives $w_j\in C([0,T];H^{k+1-j}(\Omega)\cap H_0^1(\Omega))$.
This proves the claimed regularity. Existence and uniqueness in the weak
class follow from Theorems~\ref{thm_wp1} and~\ref{thm_wp2}, and continuous
dependence follows from the semigroup and Volterra estimates used above.
\end{proof}

\begin{remark}
We emphasize that the a priori weak solution to \r{meq3} is smooth when the data are smooth and compatible; treating it merely as a weak solution does not establish this directly. 
\end{remark}

\begin{corollary}\label{cor_est}
Let $k\ge 1$. Define
\[
\|u\|_{Y_k(T)} := \sum_{j=0}^{k+1}
\|\partial_t^j u\|_{C([0,T];\,H^{k+1-j}(\Omega))}.
\]
Then, under the assumptions of Theorem~\ref{thm_well_posed}, for $0\le \varepsilon\le\varepsilon_0$, the solution satisfies
\[
\begin{aligned}
\|u\|_{Y_k(T)}
&\le
C_{k,T}
\bigg(
\|u_0\|_{H^{k+1}(\Omega)}
+
\|v_0\|_{H^k(\Omega)}
\\
&\qquad
+
\sum_{\ell=0}^{k}
\|\partial_t^\ell u_-\|_{C_{\mathrm b}((-\infty,0];\,H^{k+1-\ell}(\Omega))}
+
\sum_{\ell=0}^{k}
\|\partial_t^\ell f\|_{C([0,T];\,H^{k-\ell}(\Omega))}
\bigg).
\end{aligned}
\]
Here $C_{k,T}$ depends on $k,T,\varepsilon_0$, the elliptic constants, and the kernel bounds, but not on the particular data or on
$\varepsilon\in[0,\varepsilon_0]$.
\end{corollary}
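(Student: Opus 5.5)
The plan is to follow the two-step structure of the proof of Theorem~\ref{thm_well_posed} and keep track of every constant, checking that none depends on $\varepsilon\in[0,\varepsilon_0]$. Throughout, $\mathcal H_\varepsilon$ denotes the energy space built with $\varepsilon\mathfrak m$, and $e^{tL_\varepsilon}$ is the corresponding contraction semigroup of Theorem~\ref{thm_weak_sol}.

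\emph{Step 1: energy bounds on the time derivatives.} Set $z^{(j)}=(w_j,w_{j+1},\eta^t_j)$ with $w_j=\partial_t^ju$. By the proof of Theorem~\ref{thm_well_posed}, $z^{(j)}$ is the mild solution of $\dot z=L_\varepsilon z+g^{(j)}$ with initial value $Z_j$, for $0\le j\le k$. Since $e^{tL_\varepsilon}$ is a contraction for every $\varepsilon$, the estimate \eqref{eq:energy-source-sharp} gives
\[
\|z^{(j)}(t)\|_{\mathcal H_\varepsilon}\le \|Z_j\|_{\mathcal H_\varepsilon}+T\|\partial_t^jf\|_{C([0,T];L^2)} .
\]
Dropping the nonnegative memory part on the left, this controls $\|w_j\|_{C([0,T];H^1)}$ and $\|w_{j+1}\|_{C([0,T];L^2)}$. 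It therefore remains to bound $\|Z_j\|_{\mathcal H_\varepsilon}$ uniformly in $\varepsilon$:
\begin{itemize}
\item The jets $U_j$ come from the recursion \eqref{eq:forward-jets}. Each step loses two derivatives, and the memory term is bounded using \eqref{m-int-k} together with $\varepsilon\le\varepsilon_0$. By induction, $\|U_j\|_{H^{k+1-j}}$ is bounded by the right-hand side of the corollary.
\item The history part satisfies $\|\eta_j^0\|^2_{\mathcal M}\le \varepsilon_0\int_0^\infty\rho(s)\,\|U_j-\partial_t^ju_-(-s)\|^2_{H_D}\,\d s$. This is bounded by $\|\rho\|_{L^1}$ times the $C_{\mathrm b}$ norms of the prehistory jets and of $U_j$.
\end{itemize}

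\emph{Step 2: downward elliptic recovery.} For $j=k-1,k-2,\dots,0$ we use the Volterra identity from the proof of Theorem~\ref{thm_well_posed},
\[
\Delta_{\sigma_\varepsilon}w_j-\varepsilon\nabla\cdot\int_0^t\mathfrak m\nabla w_j(t-s)\,\d s=w_{j+2}+f_-^{(j)}-\partial_t^jf .
\]
Lemma~\ref{lemma_V} supplies a solution operator $C([0,T];H^{k-1-j})\to C([0,T];H^{k+1-j})$ whose norm is bounded uniformly in $\varepsilon\in[0,\varepsilon_0]$. This is the key ingredient, since for $\varepsilon$ small the kernel $K$ is small and no weight $e^{-\alpha t}$ is needed. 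The source terms are handled as follows:
\begin{itemize}
\item $w_{j+2}$ is already controlled in $C([0,T];H^{k-1-j})$, from Step~1 when $j=k-1$ and from the previous step of the induction otherwise.
\item $f_-^{(j)}$ in \eqref{past_source} satisfies $\|f_-^{(j)}\|_{C([0,T];H^{k-1-j})}\le C\varepsilon_0\sum_{|\beta|\le k-j}\|\partial_x^\beta\mathfrak m\|_{L^1L^\infty}\,\|\partial_t^ju_-\|_{C_{\mathrm b}H^{k+1-j}}$, by the product estimate and \eqref{m-int-k}.
\item $\partial_t^jf$ is bounded directly by the data norm.
\end{itemize}

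Summing the two steps over $j$ gives the bound on $\|u\|_{Y_k(T)}$. For $j=k,k+1$ the required bound on $\partial_t^ju$ in $H^{k+1-j}$ is exactly what Step~1 provides.

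I expect the main obstacle to be the uniformity claim inside Lemma~\ref{lemma_V}. We need $\|\Delta_{\sigma_\varepsilon,D}^{-1}\|_{H^{m}\to H^{m+2}}$ to be bounded uniformly, via the Neumann factorization $\Delta_D^{-1}(\Id+\varepsilon\Delta_{\sigma_{\mathfrak m}}\Delta_D^{-1})^{-1}$ given there. We also need the Volterra Neumann series to be summable with ratio $\varepsilon_0 C\|\mathfrak m\|_{L^1}<1$. Both hold for $\varepsilon_0$ small, depending only on $\mathfrak m$, $\sigma$ and $k$. The remaining work is routine bookkeeping of the constants.
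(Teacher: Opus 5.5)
Your proposal is correct and follows the paper's proof essentially step for step. Both arguments bound the jets $U_j$ uniformly in $\varepsilon$ via the recursion \eqref{eq:forward-jets}, together with the history elements $\eta_j^0$ and the terms $f_-^{(j)}$. Both then apply the contraction estimate to each differentiated abstract problem, and finally apply the uniformly bounded solution operator of Lemma~\ref{lemma_V} downward from $j=k-1$.

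There is one small bookkeeping slip. For $j=k-2$, the bound on $w_{j+2}=w_k$ in $H^1$ comes from Step~1, not from the previous Volterra step. Only for $j\le k-3$ is $w_{j+2}$ supplied by the induction, namely by the step at index $j+2$.
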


\begin{proof}
Each step in the proof of Theorem~\ref{thm_well_posed} comes with a
corresponding estimate. The assumptions on $\mathfrak m$ first give uniform
bounds for the maps from $u_-$ to the history elements $\eta_j^0$ and to the
terms $f_-^{(j)}$. The recursion \r{eq:forward-jets} then bounds the initial
jets $U_j$, $0\leq j\leq k+1$, by the expression in parentheses above. The
contraction estimate for the semigroup gives the corresponding bounds in the
energy and graph spaces. Finally, the solution operator in
Lemma~\ref{lemma_V} is uniformly bounded for
$0\leq\varepsilon\leq\varepsilon_0$; applying that estimate successively,
starting with $\partial_t^{k-1}u$ and proceeding downward, gives the stated
$Y_k(T)$ estimate. The constants in all these steps are uniform on
$[0,\varepsilon_0]$, which proves the claim.
\end{proof}

\begin{remark}\label{rem_h1}
We could have formulated and proved the estimates above semiclassically, replacing the derivatives in the estimates and in the definitions of the Sobolev spaces by $h\partial_t$ and $h\partial_x$. In fact, this would have been more convenient in the next step, providing sharper constants, which we do not track here; however, it would have slightly complicated the present step.
\end{remark}

\subsection{Justification of the asymptotic expansion; proof of Theorem~\ref{thm_exp}} \label{sec_just}
We have constructed earlier, for every integer $N>1$, an asymptotic solution $u_N$ to \r{1c}, \r{IC1} in $[0,T]\times\Omega$ solving $P_hu_N=r_N$, where $r_N=O(h^N)$ in every $C_h^m$. Consequently, for every fixed integer $K\geq1$, the source norm on the right-hand side of the estimate in Corollary~\ref{cor_est}, with $f=r_N$, is $O(h^{N-K})$.

The prehistory $u_-$ vanishes in $B(0,R)$, and so do $u_0$ and $v_0$. We can take $u_N$ to agree with the free solution for $t<t_0$, and hence $r_N=0$ there. Consider the correction problem $P_hw_N=r_N$ with zero Cauchy data at $t=0$ and zero prehistory. Since $r_N$ vanishes in a neighborhood of $t=0$, all of its time derivatives at $t=0$ vanish. Thus the forward jets of the correction are all zero, and its regularity and compatibility conditions are satisfied to every order. Theorem~\ref{thm_well_posed} and Corollary~\ref{cor_est}, applied at order $K$, give
\begin{equation}   \label{uw}
\|w_N\|_{Y_K(T)}\leq C_{K,T}h^{N-K}.
\end{equation}
Now fix the desired classical regularity order $q\geq0$ and precision order $\ell>0$. Choose an integer $K\geq1$ such that $K+1>q+n/2$. Sobolev embedding in the spatial variables, applied to each time derivative in the definition of $Y_K(T)$, yields $\|w_N\|_{C^q([0,T]\times\Omega)}\leq C\|w_N\|_{Y_K(T)}$. Choosing an integer $N\geq K+\ell$, we conclude from \r{uw} that $\|w_N\|_{C^q([0,T]\times\Omega)}\leq Ch^\ell$. Therefore, $u:=u_N-w_N$ is an exact solution to \r{1b}, \r{IC1}, and
\[
\|u-u_N\|_{C^q([0,T]\times\Omega)}\leq Ch^\ell.
\]
The exact solutions obtained from different truncation orders coincide by uniqueness in the weak class. Since $K$ can be chosen arbitrarily large, this solution is smooth. Finally, choose $\Omega$ so large that the causal support of the data does not meet $\partial\Omega$ before time $T$. The finite propagation argument at the beginning of section~\ref{sec_wp} shows that the solution is independent of this choice of $\Omega$ in the region under consideration and therefore gives the asserted solution on $[0,T]\times\R^n$.

As a consequence, we get the following. 

\begin{proposition} \label{pr_u_N}
There exists a unique smooth solution to \r{1b}, \r{IC1} on $[0,T]\times\R^n$.
Given an integer $q\geq0$ and $\ell>0$, there exists an integer $N$ so that for the finite asymptotic expansion $u_N$ constructed above we have
\[
\|u-u_N\|_{C^q([0,T]\times\R^n)}\leq Ch^\ell.
\]
\end{proposition}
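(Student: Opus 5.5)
The plan is to obtain the exact solution as a small correction of the truncated geometric optics solution $u_N$, and to bound the correction with the uniform a priori estimate of Corollary~\ref{cor_est}. Fix $q\ge0$ and $\ell>0$. First, work in a bounded smooth domain $\Omega\supset B(0,R_T)$ chosen so large that, by the finite speed of propagation argument at the beginning of section~\ref{sec_wp}, neither the data nor $u_N$ reach $\partial\Omega$ before time $T$. Take $u_N=e^{\i\lambda\phi/h}\sum_{j\le N}h^ja_j$, glued with the outgoing free branch. Then $P_hu_N=r_N$ with $r_N=O(h^N)$ in every $C^m_h$, hence $O(h^{N-m})$ in ordinary $C^m$. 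By construction $u_N$ equals the free solution for $t<t_0$, so $r_N$ vanishes near $t=0$. The Cauchy data of $u_N$ are those in \r{IC1}, and its prehistory vanishes in $B(0,R)$, which by Remark~\ref{remark_support1} is all that matters.

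Next, solve the correction problem $P_hw_N=r_N$ with zero Cauchy data and zero prehistory. Because $r_N$ is flat at $t=0$, every forward jet $U_j$ defined by \r{eq:forward-jets} vanishes. Hence the compatibility conditions of Definition~\ref{def:compatibility} hold at every order $K$, and Theorem~\ref{thm_well_posed} provides a unique strong solution. Corollary~\ref{cor_est}, whose constants are uniform in $\eps\in[0,\eps_0]$, then gives
\[
\|w_N\|_{Y_K(T)}\le C_{K,T}\sum_{j\le K}\|\partial_t^jr_N\|_{C([0,T];H^{K-j})}\le C h^{N-K}.
\]
Uniformity in $\eps$ is what allows us to substitute $\eps=h^{1-p}$ afterwards.

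Then choose $K$ with $K+1>q+n/2$. Spatial Sobolev embedding, applied to each time derivative appearing in $Y_K(T)$, gives $\|w_N\|_{C^q}\le C\|w_N\|_{Y_K(T)}$. Taking $N\ge K+\ell$ yields $\|w_N\|_{C^q}\le Ch^\ell$. So $u:=u_N-w_N$ is an exact solution, and $\|u-u_N\|_{C^q}\le Ch^\ell$.

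Finally, the solutions produced for different $N$ and $K$ coincide by uniqueness in the weak class (Theorem~\ref{thm_wp2}). The same weak solution therefore lies in $C^q$ for every $q$ and is smooth. Finite propagation speed shows that it is independent of the choice of $\Omega$, and extending by zero gives the statement on $[0,T]\times\R^n$.

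The main obstacle is the \emph{consistency of the support assumption} \r{suppU} used in deriving the transport equations. The memory integral in Lemma~\ref{lemma_1} must never see the singular region $s\approx t$ or the prehistory. Hence one must check that the amplitudes $a_j$, which are transported along the $\eps$-perturbed characteristics $\gamma_{y,\omega}$, stay inside $U$ for $0\le\eps\le\eps_0$. This follows because these curves are $O(\eps)$-close to the lines $y+t\omega$ and the set $U$ has a $t_0/2$ margin. With that, $r_N$ really is $O(h^N)$ uniformly.
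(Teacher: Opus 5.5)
Your proposal is correct and follows essentially the same route as the paper's proof in Section~\ref{sec_just}. You correct $u_N$ by the solution $w_N$ of $P_hw_N=r_N$ with zero data and prehistory, which is compatible to all orders since $r_N$ vanishes near $t=0$. You bound it by Corollary~\ref{cor_est} as $O(h^{N-K})$ and pass to $C^q$ by Sobolev embedding with $K+1>q+n/2$ and $N\ge K+\ell$. You then conclude smoothness and uniqueness from weak uniqueness and finite propagation speed, exactly as the paper does; your added check that the amplitudes stay in $U$ for small $\eps$ usefully supplements a point the paper only asserts when introducing \r{suppU}.
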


\begin{remark}
The $C^q$ norm above can be replaced by the $C_h^q$ norm, which is more natural for the solutions we consider. As we mentioned in Remark~\ref{rem_h1}, we could have worked in \sc\ spaces all the time. 
\end{remark}

\section*{Acknowledgements}
The authors thank Anna Mazzucato for helpful discussions on viscoelastic wave models.

\bibliographystyle{abbrv}
\bibliography{visco_references}

@Article{AcostaM_16,
  author  = {Sebastián Acosta and Carlos Montalto},
  title   = {Photoacoustic imaging taking into account thermodynamic attenuation},
  journal = {Inverse Problems},
  year    = {2016},
  volume  = {32},
  number  = {11},
  pages   = {115001},
  note    = {http://stacks.iop.org/0266-5611/32/i=11/a=115001},
}

@Book{Friedlander1998,
  author    = {Friedlander, Friedrich Gerard and Joshi, Mark Suresh},
  title     = {Introduction to the Theory of Distributions},
  publisher = {Cambridge University Press},
  year      = {1998},
}

@Book{Hormander1,
  Title                    = {The analysis of linear partial differential operators. {I}},
  Author                   = {H{\"o}rmander, Lars},
  Publisher                = {Springer-Verlag},
  Year                     = {1983},

  Address                  = {Berlin},
  Note                     = {Distribution theory and Fourier analysis},
  Volume                   = {256},

  ISBN                     = {3-540-12104-8},
  Mrclass                  = {35-02 (42B10 46Fxx)},
  Mrnumber                 = {MR717035 (85g:35002a)},
  Mrreviewer               = {L. Cattabriga},
  Pages                    = {ix+391}
}

@Article{Andrew13,
  Title                    = {Multi-wave imaging in attenuating media},
  Author                   = {Homan, Andrew},
  Journal                  = {Inverse Probl. Imaging},
  Year                     = {2013},
  Number                   = {4},
  Pages                    = {1235--1250},
  Volume                   = {7},

  Doi                      = {10.3934/ipi.2013.7.1235},
  Fjournal                 = {Inverse Problems and Imaging},
  ISSN                     = {1930-8337},
  Mrclass                  = {35R30 (35L15 92C55)},
  Mrnumber                 = {3180678},
  Mrreviewer               = {Markus Haltmeier},
  Url                      = {http://dx.doi.org/10.3934/ipi.2013.7.1235}
}

@Article{Novikov,
  Title                    = {An inversion formula for the attenuated {X}-ray transformation},
  Author                   = {Novikov, Roman G.},
  Journal                  = {Ark. Mat.},
  Year                     = {2002},
  Number                   = {1},
  Pages                    = {145--167},
  Volume                   = {40},

  Coden                    = {AKMTAJ},
  Fjournal                 = {Arkiv f\"or Matematik},
  ISSN                     = {0004-2080},
  Mrclass                  = {44A12 (35R30 65R10 82C70 92C55)},
  Mrnumber                 = {MR1948891 (2003k:44004)},
  Mrreviewer               = {Olga Klimenko}
}

@Article{SU-thermo_brain,
  Title                    = {Thermoacoustic tomography arising in brain imaging},
  Author                   = {Stefanov, Plamen and Uhlmann, Gunther},
  Journal                  = {Inverse Problems},
  Year                     = {2011},
  Number                   = {4},
  Pages                    = {045004, 26},
  Volume                   = {27},

  Coden                    = {INPEEY},
  Doi                      = {10.1088/0266-5611/27/4/045004},
  Fjournal                 = {Inverse Problems. An International Journal on the Theory and Practice of Inverse Problems, Inverse Methods and Computerized Inversion of Data},
  ISSN                     = {0266-5611},
  Mrclass                  = {76Q05 (35L05 35R30 65M32 92C55)},
  Mrnumber                 = {2781028},
  Url                      = {http://dx.doi.org/10.1088/0266-5611/27/4/045004}
}

@Article{SU-thermo,
  Title                    = {Thermoacoustic tomography with variable sound speed},
  Author                   = {Stefanov, Plamen and Uhlmann, Gunther},
  Journal                  = {Inverse Problems},
  Year                     = {2009},
  Number                   = {7},
  Pages                    = {075011, 16},
  Volume                   = {25},

  Coden                    = {INPEEY},
  Doi                      = {10.1088/0266-5611/25/7/075011},
  Fjournal                 = {Inverse Problems. An International Journal on the Theory and Practice of Inverse Problems, Inverse Methods and Computerized Inversion of Data},
  ISSN                     = {0266-5611},
  Mrclass                  = {35R30 (76Q05)},
  Mrnumber                 = {MR2519863},
  Url                      = {http://dx.doi.org/10.1088/0266-5611/25/7/075011}
}

@Book{Reed-Simon2,
  author     = {Reed, Michael and Simon, Barry},
  title      = {Methods of modern mathematical physics. {II}. {F}ourier analysis, self-adjointness},
  year       = {1975},
  publisher  = {Academic Press [Harcourt Brace Jovanovich, Publishers], New York-London},
  pages      = {xv+361},
  mrclass    = {47-02 (81.47)},
  mrnumber   = {0493420},
  mrreviewer = {P. R. Chernoff},
}

@Article{Chep_Pata,
  author     = {Chepyzhov, V. V. and Pata, V.},
  title      = {Some remarks on stability of semigroups arising from linear viscoelasticity},
  doi        = {10.3233/asy-2006-731},
  issn       = {0921-7134,1875-8576},
  number     = {3-4},
  pages      = {251--273},
  url        = {https://doi-org.ezproxy.lib.purdue.edu/10.3233/asy-2006-731},
  volume     = {46},
  fjournal   = {Asymptotic Analysis},
  journal    = {Asymptot. Anal.},
  mrclass    = {47D06 (35B35 35Q72 45K05 74D05 74H55)},
  mrnumber   = {2215885},
  mrreviewer = {Rosella\ Sampalmieri},
  year       = {2006},
}

@Article{Giorgi_P_01,
  author     = {Giorgi, Claudio and Pata, Vittorino},
  title      = {Stability of abstract linear thermoelastic systems with memory},
  doi        = {10.1142/S0218202501001021},
  issn       = {0218-2025,1793-6314},
  number     = {4},
  pages      = {627--644},
  url        = {https://doi-org.ezproxy.lib.purdue.edu/10.1142/S0218202501001021},
  volume     = {11},
  fjournal   = {Mathematical Models and Methods in Applied Sciences},
  journal    = {Math. Models Methods Appl. Sci.},
  mrclass    = {74D05 (34G10 35R10 74F05 74H40)},
  mrnumber   = {1832996},
  mrreviewer = {Gustavo\ Perla\ Menzala},
  year       = {2001},
}

@Article{HrusaNR_88,
  author   = {Hrusa, W. J. and Nohel, J. A. and Renardy, M.},
  title    = {Initial Value Problems in Viscoelasticity},
  doi      = {10.1115/1.3151871},
  issn     = {0003-6900},
  number   = {10},
  pages    = {371--378},
  url      = {https://doi.org/10.1115/1.3151871},
  volume   = {41},
  journal  = {Applied Mechanics Reviews},
  month    = {10},
  year     = {1988},
}

@Article{Hanyga2002PropagationPulses,
  author  = {Hanyga, Andrzej},
  title   = {Propagation of Pulses in Viscoelastic Media},
  doi     = {10.1007/s00024-002-8707-x},
  pages   = {1749--1769},
  volume  = {159},
  journal = {Pure and Applied Geophysics},
  year    = {2002},
}

@Article{HrusaR_85,
  author     = {Hrusa, W. J. and Renardy, M.},
  title      = {On wave propagation in linear viscoelasticity},
  doi        = {10.1090/qam/793532},
  issn       = {0033-569X,1552-4485},
  number     = {2},
  pages      = {237--254},
  url        = {https://doi-org.ezproxy.lib.purdue.edu/10.1090/qam/793532},
  volume     = {43},
  fjournal   = {Quarterly of Applied Mathematics},
  journal    = {Quart. Appl. Math.},
  mrclass    = {45K05 (73F99)},
  mrnumber   = {793532},
  mrreviewer = {Robert\ L.\ Wheeler},
  year       = {1985},
}

@Article{HanygaSeredynska2002SingularMemory,
  author  = {Hanyga, Andrzej and Seredy{\'n}ska, Ma{\l}gorzata},
  title   = {Asymptotic and exact fundamental solutions in hereditary media with singular memory kernels},
  doi     = {10.1090/S0033-569X-02-01080-5},
  number  = {2},
  pages   = {213--244},
  volume  = {60},
  journal = {Quarterly of Applied Mathematics},
  year    = {2002},
}

@Article{HanygaSeredynska1999MemoryKernelSingularityI,
  author  = {Hanyga, Andrzej and Seredy{\'n}ska, Ma{\l}gorzata},
  title   = {Some effects of the memory kernel singularity on wave propagation and inversion in poroelastic media, {I}: Forward modeling},
  doi     = {10.1046/j.1365-246X.1999.00775.x},
  number  = {2},
  pages   = {319--335},
  volume  = {137},
  journal = {Geophysical Journal International},
  year    = {1999},
}

@Article{Dafermos_70,
  author     = {Dafermos, Constantine M.},
  title      = {Asymptotic stability in viscoelasticity},
  doi        = {10.1007/BF00251609},
  issn       = {0003-9527},
  pages      = {297--308},
  url        = {https://doi-org.ezproxy.lib.purdue.edu/10.1007/BF00251609},
  volume     = {37},
  fjournal   = {Archive for Rational Mechanics and Analysis},
  journal    = {Arch. Rational Mech. Anal.},
  mrclass    = {73.45},
  mrnumber   = {281400},
  mrreviewer = {H.\ Amann},
  year       = {1970},
}

@InProceedings{Dafermos_76,
  author    = {Dafermos, C. M.},
  booktitle = {Applications of Methods of Functional Analysis to Problems in Mechanics},
  title     = {Contraction semigroups and trend to equilibrium in continuum mechanics},
  editor    = {Germain, Paul and Nayroles, Bernard},
  isbn      = {978-3-540-38165-5},
  pages     = {295--306},
  publisher = {Springer Berlin Heidelberg},
  address   = {Berlin, Heidelberg},
  year      = {1976},
}

@Book{Pruss1993,
  author    = {Pr{\"u}ss, Jan},
  title     = {Evolutionary Integral Equations and Applications},
  doi       = {10.1007/978-3-0348-0499-8},
  publisher = {Birkh{\"a}user Verlag},
  series    = {Monographs in Mathematics},
  volume    = {87},
  address   = {Basel},
  year      = {1993},
}

@Article{ChintadaRauGoksel2022,
  author  = {Chintada, Bhaskara Rao and Rau, Richard and Goksel, Orcun},
  title   = {Spectral Ultrasound Imaging of Speed-of-Sound and Attenuation Using an Acoustic Mirror},
  doi     = {10.3389/fphy.2022.860725},
  issn    = {2296-424X},
  url     = {https://www.frontiersin.org/journals/physics/articles/10.3389/fphy.2022.860725},
  volume  = {10},
  journal = {Frontiers in Physics},
  year    = {2022},
}

@Book{Bushberg2012,
  author    = {Bushberg, Jerrold T. and Seibert, J. Anthony and Leidholdt, Jr., Edwin M. and Boone, John M.},
  title     = {The Essential Physics of Medical Imaging},
  edition   = {3},
  isbn      = {9780781780575},
  publisher = {Lippincott Williams \& Wilkins},
  address   = {Philadelphia, PA},
  year      = {2012},
}

@Misc{AIUM2019,
  author       = {{American Institute of Ultrasound in Medicine}},
  title        = {Statement Regarding Mathematical Tissue Models},
  howpublished = {\url{https://www.aium.org/resources/official-statements/view/statement-regarding-mathematical-tissue-models}},
  note         = {Approved March 19, 2007; reapproved April 1, 2012, April 7, 2019, and July 30, 2024. Accessed June 13, 2026},
  year         = {2019},
}

@Article{Kjartansson1979,
  author  = {Kjartansson, E.},
  title   = {Constant Q-wave propagation and attenuation},
  doi     = {10.1029/JB084iB09p04737},
  number  = {B9},
  pages   = {4737--4748},
  volume  = {84},
  journal = {Journal of Geophysical Research},
  year    = {1979},
}

@Article{LekicMatasPanningRomanowicz2009,
  author  = {Leki{\'c}, Ved and Matas, Jan and Panning, Mark P. and Romanowicz, Barbara},
  title   = {Measurement and implications of frequency dependence of attenuation},
  doi     = {10.1016/j.epsl.2009.03.030},
  number  = {1--4},
  pages   = {285--293},
  volume  = {282},
  journal = {Earth and Planetary Science Letters},
  year    = {2009},
}

@Book{Mainardi2010,
  author    = {Francesco Mainardi},
  title     = {Fractional Calculus and Waves in Linear Viscoelasticity: An Introduction to Mathematical Models},
  isbn      = {978-1-84816-329-4},
  publisher = {Imperial College Press},
  address   = {London},
  year      = {2010},
}

@Article{Covi_26,
  author        = {Giovanni Covi and Maarten de Hoop and Mikko Salo},
  title         = {Propagation of singularities and inverse problems for the viscoacoustic wave equation},
  eprint        = {2603.24497},
  url           = {https://arxiv.org/abs/2603.24497},
  archiveprefix = {arXiv},
  journal       = {arXiv:2603.24497},
  primaryclass  = {math.AP},
  year          = {2026},
}

@Article{acosta2025,
  author        = {Sebastian Acosta and Benjamin Palacios},
  title         = {Inverse photoacoustic tomography problem in media with fractional attenuation},
  eprint        = {2510.03408},
  url           = {https://arxiv.org/abs/2510.03408},
  archiveprefix = {arXiv},
  journal       = {arXiv:2510.03408},
  primaryclass  = {math.AP},
  year          = {2025},
}

@Article{Hanyga2001SingularMemory,
  author  = {Hanyga, Andrzej},
  title   = {Wave propagation in media with singular memory},
  doi     = {10.1016/S0895-7177(01)00137-6},
  number  = {12--13},
  pages   = {1399--1421},
  volume  = {34},
  journal = {Mathematical and Computer Modelling},
  year    = {2001},
}

@Article{Szabo1994PowerLaw,
  author  = {Szabo, Thomas L.},
  title   = {Time domain wave equations for lossy media obeying a frequency power law},
  doi     = {10.1121/1.410434},
  number  = {1},
  pages   = {491--500},
  volume  = {96},
  journal = {The Journal of the Acoustical Society of America},
  year    = {1994},
}

@Article{HolmNasholm2014,
  author  = {Holm, Sverre and N{\"a}sholm, Sven Peter},
  title   = {Comparison of fractional wave equations for power law attenuation in ultrasound and elastography},
  doi     = {10.1016/j.ultrasmedbio.2013.09.033},
  number  = {4},
  pages   = {695--703},
  volume  = {40},
  journal = {Ultrasound in Medicine \& Biology},
  year    = {2014},
}

@Article{BrouckeOparnica2022,
  author  = {Broucke, Frederik and Oparnica, Ljubica},
  title   = {Micro-local and qualitative analysis of the fractional {Z}ener wave equation},
  doi     = {10.1016/j.jde.2022.03.006},
  pages   = {217--257},
  volume  = {321},
  journal = {Journal of Differential Equations},
  year    = {2022},
}

@Article{Sinestrari1999,
  author  = {Sinestrari, Eugenio},
  title   = {Wave equation with memory},
  doi     = {10.3934/dcds.1999.5.881},
  number  = {4},
  pages   = {881--896},
  volume  = {5},
  journal = {Discrete and Continuous Dynamical Systems},
  year    = {1999},
}

@Article{KaltenbacherEtAl2022,
  author  = {Kaltenbacher, Barbara and Khristenko, Ustim and Nikoli{\'c}, Vanja and Rajendran, Mabel Lizzy and Wohlmuth, Barbara},
  title   = {Determining kernels in linear viscoelasticity},
  doi     = {10.1016/j.jcp.2022.111331},
  pages   = {111331},
  volume  = {464},
  journal = {Journal of Computational Physics},
  year    = {2022},
}

@Article{BukhgeimDyatlovUhlmann2007,
  author  = {Bukhgeim, A. L. and Dyatlov, G. V. and Uhlmann, Gunther},
  title   = {Unique continuation for hyperbolic equations with memory},
  doi     = {10.1515/JIIP.2007.032},
  number  = {6},
  pages   = {587--598},
  volume  = {15},
  journal = {Journal of Inverse and Ill-Posed Problems},
  year    = {2007},
}

@InCollection{BukhgeimDyatlovUhlmann2002,
  author    = {Bukhgeim, A. L. and Dyatlov, G. V. and Uhlmann, Gunther},
  title     = {Reconstruction of the memory from partial boundary measurements},
  booktitle = {Mathematical Results in Quantum Mechanics},
  series    = {Contemporary Mathematics},
  volume    = {307},
  pages     = {39--46},
  publisher = {American Mathematical Society},
  address   = {Providence, RI},
  year      = {2002},
}

@Article{ColemanNoll1961,
  author  = {Coleman, Bernard D. and Noll, Walter},
  title   = {Foundations of Linear Viscoelasticity},
  doi     = {10.1103/RevModPhys.33.239},
  number  = {2},
  pages   = {239--249},
  volume  = {33},
  journal = {Reviews of Modern Physics},
  year    = {1961},
}

@Book{Lakes2009,
  author    = {Lakes, Roderic S.},
  title     = {Viscoelastic Materials},
  doi       = {10.1017/CBO9780511626722},
  isbn      = {978-0-521-88568-3},
  publisher = {Cambridge University Press},
  address   = {Cambridge},
  year      = {2009},
}

@Article{BagleyTorvik1983,
  author  = {Bagley, Ronald L. and Torvik, Peter J.},
  title   = {A Theoretical Basis for the Application of Fractional Calculus to Viscoelasticity},
  doi     = {10.1122/1.549724},
  number  = {3},
  pages   = {201--210},
  volume  = {27},
  journal = {Journal of Rheology},
  year    = {1983},
}

@Article{Romanov2014Coefficients,
  author  = {Romanov, V. G.},
  title   = {On the Determination of the Coefficients in the Viscoelasticity Equations},
  doi     = {10.1134/S0037446614030124},
  number  = {3},
  pages   = {503--510},
  volume  = {55},
  journal = {Siberian Mathematical Journal},
  year    = {2014},
}

\end{document}